\documentclass[bibliography=totocnumbered,final
]{scrartcl}
\usepackage[utf8]{inputenc}
\usepackage{hyperref}
\usepackage[english]{babel}
\usepackage{enumerate}
\usepackage{aliascnt}
\usepackage{amssymb}
\usepackage{amsmath}
\usepackage{verbatim}
\usepackage{mathtools}
\usepackage{nicefrac}
\usepackage{tikz}
\usepackage{subfig}
\usepackage[nottoc]{tocbibind}

\usepackage{amsthm}

\usepackage[title]{appendix}

\usepackage[notcite,notref]{showkeys}
\usepackage{color}


\usepackage{dsfont} 
\usepackage{hyperref} 
\usepackage{cleveref}

\usepackage{autonum} 

\setlength{\parindent}{0pt}  
\allowdisplaybreaks[4] 

\newtheorem{prop}{Proposition}[section]

\newaliascnt{lem}{prop} 

\newtheorem{lem}[lem]{Lemma}

\aliascntresetthe{lem} 
\Crefname{lem}{Lemma}{Lemmas}

\newaliascnt{defi}{prop} 
 \newtheorem{defi}[defi]{Definition}

\aliascntresetthe{defi} 
\Crefname{defi}{Definition}{Definitions}

\newaliascnt{cor}{prop} 
 
\aliascntresetthe{cor}

\newaliascnt{remark}{prop} 
 \newtheorem{remark}[remark]{Remark}
 
\aliascntresetthe{remark} 

\newaliascnt{thm}{prop} 
 \newtheorem{thm}[thm]{Theorem}
 
\aliascntresetthe{thm} 

\newaliascnt{example}{prop} 
 
\aliascntresetthe{example}



\def\equationautorefname~#1\null{%
  (#1)\null
}

\DeclareMathOperator{\supp}{supp} 
\newcommand{\R}{\ensuremath{\mathbb{R}}}
\newcommand{\N}{\ensuremath{\mathbb{N}}}
\newcommand{\Z}{\ensuremath{\mathbb{Z}}}
\renewcommand{\S}{\ensuremath{\mathbb{S}}}

\newcommand*\diff{\mathop{}\!\mathrm{d}}

\newcommand{\defeq}{\vcentcolon=}

\newcommand{\CalE}{\ensuremath{\mathcal{E}}}

\newcommand{\CalL}{\ensuremath{\mathcal{L}}}
\newcommand{\CalA}{\ensuremath{\mathcal{A}}}

\newcommand{\CalK}{\ensuremath{\mathcal{K}}}
\newcommand{\am}{\mathrm{am} }
\newcommand{\cn}{\mathrm{cn}}
\newcommand{\dn}{\mathrm{dn}}
\newcommand{\sn}{\mathrm{sn}}
\newcommand{\ds}{\;\mathrm{d}s}

\newcommand{\sech}{\mathrm{sech}}

\newcommand{\vKap}{\ensuremath{\vec{\kappa}}}
\newcommand{\abs}[1]{\ensuremath{\left \lvert #1\right\rvert}}
\newcommand{\Norm}[2]{\ensuremath{\left\Vert #1 \right\Vert_{#2}}}
\newcommand{\norm}[2]{\ensuremath{\Vert #1 \Vert_{#2}}}

\title{A Li--Yau inequality for the 1-dimensional Willmore energy}
\author{Marius Müller\thanks{Mathematisches Institut,  Albert--Ludwigs--Universität Freiburg, 79104 Freiburg im Breisgau, Germany.}~and Fabian Rupp\thanks{Institute of Applied Analysis, Ulm University, Helmholtzstra\ss e 18, 89081 Ulm, Germany.}}

\begin{document}

\date{}

\maketitle

\begin{abstract}
\noindent \textbf{Abstract:} By the classical Li--Yau inequality, an immersion of a {closed} surface in $\R^n$ with Willmore energy below $8\pi$ has to be embedded. We discuss analogous results for curves in $\R^2$, involving Euler's elastic energy and other possible curvature functionals. Additionally, we provide applications to associated gradient flows.
\end{abstract}


\bigskip
\noindent \textbf{Keywords:} Li--Yau inequality, Willmore functional, elastic energy, embeddedness.
 
 \noindent \textbf{MSC(2020)}: 53A04 (primary),  49Q10, 53E40 (secondary).

\section{Introduction and main results}
	For an immersion $f\colon\Sigma \to\R^n$ of a surface $\Sigma$, its \emph{Willmore energy} is defined by
	\begin{align}\label{eq:DefWillmore}
		\mathcal{W}(f) \defeq \frac{1}{4}\int_{\Sigma} \abs{H}^2 \diff \mu.
	\end{align}
	Here $H$ denotes the mean curvature vector and
	$\mu$ is the Riemannian measure induced by pulling back the Euclidean metric to $\Sigma$. In their fundamental work \cite{LY82}, Li and Yau proved an inequality which yields that an immersion with suitably small Willmore energy must in fact be an embedding. More specifically, if $\Sigma$ is compact, then 
	\begin{align}\label{eq:LYWillmore}
	\mathcal{W}(f) <8\pi \text{ implies that } f \text{ is an embedding.}
	\end{align}
	Moreover, as a doubly covered round sphere shows, the constant $8\pi$ in \eqref{eq:LYWillmore} is optimal.
	
	In this article, we study the question whether an analogous result as in \eqref{eq:LYWillmore} is true for planar curves. For a closed smooth curve $\gamma\colon\S^1\to \R^2$, which is {immersed}, i.e. $\abs{\gamma^{\prime}}>0$, and has {signed curvature} $\kappa$, its \emph{elastic energy} is defined by
	\begin{align}\label{eq:defElastic}
		\CalE(\gamma) \defeq \int_{\S^1} \abs{\kappa}^2\diff s.
	\end{align}
	This formally resembles the Willmore energy. However, in contrast to \eqref{eq:DefWillmore}, $\CalE$ is not scaling invariant, whereas the property of being embedded is. A natural scaling invariant one-dimensional version of the Willmore energy is the \emph{total curvature}, defined by
	\begin{align}
		\CalK(\gamma)\defeq \int_{\S^1}\abs{\kappa}\diff s.
	\end{align}
	It has a wide range of geometric applications and has been studied {extensively}
	, for instance in \cite{Fenchel,Fary,MilnorKnots,Milnor2}.
	
	We will show that the total curvature does not allow for a non-trivial {version} of \eqref{eq:LYWillmore}. This will be a consequence of the following observation.
	\begin{thm}\label{thm:infTotalCurv}
		We have 
		\begin{align}
			2\pi &= \inf\{ \CalK(\gamma)\mid \gamma\in C^{2}(\S^1;\R^2)  \text{ non-embedded immersion}\} \label{eq:InfTotalCurv} \\
			& = \inf \{ \CalK(\gamma)\mid \gamma\in C^{2}(\S^1;\R^2)  \text{ immersion}\}.
		\end{align}
		Moreover, the infimum among non-embedded immersions is not attained.
	\end{thm}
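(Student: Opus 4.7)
The plan is to deduce both infima from Fenchel's classical theorem, which states that any closed $C^2$ immersion $\gamma\colon\S^1\to\R^n$ satisfies $\CalK(\gamma)\ge 2\pi$, with equality precisely for convex planar curves (which are automatically embedded). This simultaneously delivers the lower bound $\ge 2\pi$ for both infima and the non-attainment claim: any non-embedded curve with $\CalK=2\pi$ would, by the equality case of Fenchel, be convex and hence embedded, a contradiction. The unit circle $t\mapsto(\cos t,\sin t)$ realises $\CalK=2\pi$, showing the second infimum is attained.

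For the first infimum it then remains to exhibit a family of non-embedded immersions whose total curvature converges to $2\pi$. A convenient choice is the Lissajous-type curve
\[
\gamma_\eps(t) = (\cos t,\; \eps \sin(2t)),\qquad t\in\S^1,\; \eps > 0.
\]
The identity $|\gamma_\eps'(t)|^2 = \sin^2 t + 4\eps^2\cos^2(2t) > 0$ shows immersedness, while $\gamma_\eps(\pi/2)=\gamma_\eps(3\pi/2)=(0,0)$ provides a transverse self-intersection, so $\gamma_\eps$ is non-embedded.

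To verify $\CalK(\gamma_\eps)\to 2\pi$, I would compute the signed curvature $\kappa$ and observe that its sign agrees with that of $\cos t$, so that for the continuously lifted tangent angle $\theta_\eps=\arg\gamma_\eps'$ the function $\theta_\eps$ is monotone on each of $[0,\pi/2]$, $[\pi/2,3\pi/2]$ and $[3\pi/2,2\pi]$. The continuously lifted endpoint values are $\pi/2$, $\pi+\arctan(2\eps)$, $-\arctan(2\eps)$ and $\pi/2$, so summing the absolute changes yields
\[
\CalK(\gamma_\eps) = \int_0^{2\pi} |\theta_\eps'|\dt = 2\pi + 4\arctan(2\eps),
\]
which tends to $2\pi$ as $\eps\to 0^+$. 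The main obstacle is this angular bookkeeping, in particular determining the correct continuous lift of $\theta_\eps$ near the points where a component of $\gamma_\eps'$ vanishes; one could alternatively integrate the closed-form expression for $|\kappa|$ directly, but the angle approach is cleaner.
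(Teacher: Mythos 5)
Your proposal is correct, and its skeleton coincides with the paper's proof: Fenchel's theorem gives the lower bound $2\pi$ for both infima (the circle settles the second one), an explicit family of non-embedded immersions with total curvature tending to $2\pi$ settles the first, and the equality case of Fenchel (total curvature $2\pi$ only for convex embedded curves) rules out attainment among non-embedded curves. The only genuine difference is the approximating family: the paper glues a circular arc of opening angle $2\pi-2\beta$ to its tangent lines and reflects, obtaining curves that are merely $W^{2,2}$ (piecewise $C^2$, curvature jumping between $1$ and $0$) with $\CalK=4\pi-4\beta\searrow 2\pi$, whereas you use the smooth Lissajous figure-eights $\gamma_\eps(t)=(\cos t,\eps\sin 2t)$. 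Your computation checks out: $\det(\gamma_\eps',\gamma_\eps'')=2\eps\cos t\,(1+2\sin^2 t)$, so $\sgn\kappa=\sgn\cos t$, the tangent stays in the left (resp.\ right) open half-plane on $(0,\pi)$ (resp.\ $(\pi,2\pi)$) so the lift cannot wrap, and summing the monotone angle increments gives $\CalK(\gamma_\eps)=2\pi+4\arctan(2\eps)\to 2\pi$. Your construction even has a small advantage: since $\gamma_\eps\in C^\infty$, it lies squarely in the class $C^2(\S^1;\R^2)$ of the statement, while the paper's $W^{2,2}$ curves would strictly speaking need a mollification step to be admissible competitors there.
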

	{In order to obtain a non-trivial version of \eqref{eq:LYWillmore} we need to identify a different quantity.}
	Our main result shows that the elastic energy provides a positive answer, when restricted to curves of fixed length, or --- equivalently  --- multiplied with the length functional $\CalL$.
	\begin{thm}[Main theorem]\label{thm:main}
	If $\gamma \in W^{2,2}(\mathbb{S}^{1}; \R^{2})$ is an immersed curve with
	\begin{align}
		\CalE(\gamma)\CalL(\gamma)< c^*\defeq   \CalE(\gamma^{\ast})\CalL(\gamma^{\ast}),
		\end{align} 
		then $\gamma$ is an embedding. Here $\gamma^{\ast}$ is the figure eight elastica (see \Cref{def:figeight}).
	\end{thm}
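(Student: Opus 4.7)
The plan is to prove the contrapositive: any non-embedded immersion $\gamma\in W^{2,2}(\S^{1},\R^{2})$ satisfies $\CalE(\gamma)\CalL(\gamma)\geq c^{\ast}$. Since $\CalE\CalL$ and embeddedness are scale-invariant, I would normalize $\CalL(\gamma)=1$ and prove $\CalE(\gamma)\geq c^{\ast}$ on the class
\[
\CalA\defeq \{\gamma\in W^{2,2}(\S^{1},\R^{2}) : \gamma \text{ is a non-embedded immersion},\ \CalL(\gamma)=1\}.
\]
The strategy is to show, via the direct method of the calculus of variations, that $\inf_{\CalA}\CalE$ is attained by a suitably rescaled copy of the figure eight elastica $\gamma^{\ast}$, yielding $\inf_{\CalA}\CalE = c^{\ast}$.

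For a minimizing sequence $(\gamma_{n})\subset\CalA$, I would reparametrize by constant-speed arclength and translate so that a self-intersection lies at the origin. The uniform $\CalE$-bound together with $\CalL(\gamma_n)=1$ gives a uniform $W^{2,2}$-bound, so along a subsequence $\gamma_n\rightharpoonup\bar\gamma$ weakly in $W^{2,2}$ and strongly in $C^{1}$. The immersion condition, the length normalization, and $\bar\gamma$ passing through the origin pass to the limit, while weak lower semicontinuity of $\CalE$ secures minimality. The subtle point is preventing the two preimages $s_n^{(1)}\neq s_n^{(2)}$ of the self-intersection from coalescing in the limit. For this, I would split $\gamma_n$ at the self-intersection into two closed loops; the smaller loop---viewed as a closed $C^{1}$-curve with a single corner of exterior angle $\alpha_n$ and length $\ell_n$---satisfies via Fenchel's theorem and Cauchy--Schwarz
\[
\CalE(\gamma_n) \geq \frac{1}{\ell_n}\left(\int|\kappa|\,ds\right)^{2}\geq \frac{(2\pi-|\alpha_n|)^{2}}{\ell_n},
\]
so $\ell_n\to 0$ forces $|\alpha_n|\to \pi$, a tangential limit configuration that must be excluded by a finer analysis showing it is not a genuine non-embedding stable under $C^{1}$-convergence.

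Once $\bar\gamma\in\CalA$ is secured as a minimizer, it solves on each branch between self-intersections the elastica Euler--Lagrange equation with a length Lagrange multiplier. Taking free variations of the intersection parameters and of the two tangents at the intersection, one derives transmission conditions promoting $\bar\gamma$ to a $C^{2}$-closed elastica---any remaining corner could be smoothed to strictly decrease $\CalE$ while retaining non-embeddedness. The classification of closed planar elasticae then leaves only multi-covered circles (with $\CalE\CalL=4\pi^{2}n^{2}\geq 16\pi^{2}$ for $n\geq 2$) and the figure eight elastica $\gamma^{\ast}$ itself. Since $c^{\ast}<16\pi^{2}$, the minimizer must be $\gamma^{\ast}$, completing the proof. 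The main obstacles I anticipate are (i) excluding the tangential limit $|\alpha_n|\to\pi$ in the compactness step and (ii) establishing $C^{2}$-regularity of $\bar\gamma$ across the self-intersection by constructing energy-decreasing perturbations that preserve non-embeddedness.
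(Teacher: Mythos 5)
Your overall architecture is the same as the paper's (direct method for $\inf\{\CalE\CalL\}$ over non-embedded immersions, show the minimizer is a closed elastica, then invoke the classification and the comparison $c^*<16\pi^2$), and your compactness step is essentially fine --- in fact easier than you make it: the limit is automatically non-embedded because the set of $C^1$-embeddings is open in $C^1$ (\Cref{lem:embeddingsopen}), so no analysis of coalescing preimages or of a ``tangential limit configuration'' is needed; alternatively your own inequality already gives $\CalE(\gamma_n)\ge(2\pi-|\alpha_n|)^2/\ell_n\ge\pi^2/\ell_n$ since the exterior angle satisfies $|\alpha_n|\le\pi$, so $\ell_n\not\to 0$ under an energy bound.

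The genuine gap is the passage from ``$\bar\gamma$ is a minimizer'' to ``$\bar\gamma$ is a $C^2$-closed constrained elastica,'' which you compress into ``free variations of the intersection parameters and tangents'' and ``transmission conditions.'' Two things block this as stated. First, you implicitly assume the self-intersection set is finite (indeed a single transversal double point) when you speak of branches between self-intersections; a priori a $W^{2,2}$ minimizer could have several intersection points, higher multiplicity, or even a continuum of self-intersections (e.g.\ a doubly traversed arc), and then the branch/transmission analysis is not even well posed. The paper rules these out by a bootstrap you would need some substitute for: if there were two intersection points (or multiplicity $>2$), variations supported away from the preimages of one of them stay in $\CalA$, so the Euler--Lagrange equation holds globally, the minimizer is an elastica, hence by \Cref{lem:charelas} the figure eight --- contradicting the assumed configuration (\Cref{lem:singleton}, \Cref{lem:35}). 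Second, and more seriously, nothing in your argument excludes a \emph{tangential} self-intersection. There the constraint is genuinely active: arbitrarily small perturbations can remove the intersection (\Cref{fig:tangentialSelfIntersection}), so ``energy-decreasing perturbations that preserve non-embeddedness'' need not exist locally, the linearization $(\bar\gamma'(x_1),-\bar\gamma'(x_2))$ of the incidence constraint is singular, and the implicit-function/natural-boundary-condition reasoning you sketch breaks down; this is exactly the case your closing sentence flags but does not resolve. The paper needs a separate mechanism here: the winding number is $\neq\pm1$ (so the curve is an interior point of $\CalA$ by Hopf's Umlaufsatz, \Cref{prop:WindungszahlInner}) when the tangents agree, and an arc-reversal surgery producing another minimizer with even turning number when they are opposite (\Cref{lem:NoTangetialSelfIntersections}); only after tangential intersections are excluded does the implicit function theorem (\Cref{lem:nontang implicit function}) show $\bar\gamma$ is interior and hence satisfies the Euler--Lagrange equation (\Cref{lem:minielass}). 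Without these steps, or replacements for them, your proof does not go through.
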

	\begin{remark}
		 The value of $c^*$ is sharp since $\gamma^*$ itself is not an embedding, cf. \Cref{lem:figureeight} for the details. {A numerical computation yields $c^*\simeq 112.4396$}.
	\end{remark}
	We have thus identified a geometric quantity of curves whose smallness ensures that the curve is embedded. The fact that any curve with a point with large multiplicity has to have large energy $\CalE\CalL$ has already been observed in \cite{Polden,vonderMosel,WheelerCurveDiffusion,Wojtowytsch,PozzettaVarifold}. More precisely, one may bound the energy by $\CalE(\gamma)\CalL(\gamma)\geq c k^2$  whenever $\gamma$ possesses a point with multiplicity $k\in \N$. Here $c>0$ is a constant and as a consequence of \cite{Polden,vonderMosel,WheelerCurveDiffusion} $c=16$ is possible. While such a relation between multiplicity and energy is also part of the statement of the original Li--Yau inequality for the Willmore energy \cite{LY82}, this does not give the optimal threshold to guarantee embeddedness.
	
	The idea for proving \Cref{thm:main} is to look at the minimization problem

	\begin{align}\label{eq:gelato}
		\inf\{\CalE(\gamma)\CalL(\gamma)\mid \gamma\in W^{2,2}(\S^1;\R^2) \text{ non-embedded immersion}\}.
	\end{align}
	
	Minimizing among non-embedded immersions is a non-standard condition, because the admissible set is not open. This causes difficulties in applying Euler--Lagrange methods. However, we will be able to deduce that the minimizer is an interior point of the admissible set and thus satisfies an Euler--Lagrange equation. This can be achieved by a detailed analysis of the self-intersections of minimizers. The main ingredient here is the classification of planar elastic curves (see for instance \cite{LangerSinger1,DHV,AnnaNetworks}).

 As a future extension of \Cref{thm:main} one could also try to find such \textit{embeddedness-ensuring quantities} in other ambient manifolds than $\mathbb{R}^2$.
\\
 In  the hyperbolic half-plane $\mathbb{H}^2$ an embeddedness-ensuring quantity can indeed be identified. By \cite{LangerSinger2} one has for all  immersed curves $\gamma \in C^\infty( \mathbb{S}^1; \mathbb{H}^2)$ 
 \begin{equation}\label{eq:ElasticVSWillmore}
 \int_{\S^1} |\kappa_{\mathbb{H}}[\gamma]|^2 \; \mathrm{d}s = \frac{2}{\pi} \mathcal{W}( S(\gamma) ) , 
 \end{equation}
 
 where $\kappa_{\mathbb{H}}[\gamma]$ denotes the hyperbolic curvature of $\gamma$ and $S(\gamma)$ denotes the immersion that arises from revolution of $\gamma$  around the $x_1$-axis. This and \eqref{eq:LYWillmore} yield that
 \begin{equation}
 \int_{\S^1} |\kappa_{\mathbb{H}}[\gamma]|^2 \; \mathrm{d}s  < 16 \; \; \textrm{implies that $\gamma$ is an embedding}.
\end{equation}  
The threshold of $16$ is also sharp for this implication, cf. \cite[Corollary 6.4]{MS20}. Notice that this does not immediately follow from the sharpness of the inequality in  \eqref{eq:LYWillmore}. Indeed, the standard examples which yield sharpness of the classical Li--Yau inequality are not necessarily surfaces of revolution which arise from revolving a closed curve. Hence \eqref{eq:ElasticVSWillmore} can not be used to obtain sharpness of the
	threshold of 16 immediately.
 \\
In $\mathbb{S}^2$ the elastic energy of curves $\gamma\in C^{\infty}(\S^1;\S^2)$ given by $$  \gamma \mapsto \int_{\S^1} |\kappa_{\mathbb{S}^2}[\gamma]|^2 \; \mathrm{d}{s}$$ is not an embeddedness-ensuring quantity since any two-fold cover of a closed geodesic in $\mathbb{S}^2$ is non-embedded and has vanishing energy.  
 \\
  It would be interesting to investigate whether Theorem \ref{thm:main} generalizes to curves in $\mathbb{R}^3$, see Remark \ref{rem:5.12} for some ideas in this context. With the gradient flow methods in \Cref{sec:flow} one could then deform any curve with sufficiently small energy into an elastica, while preserving its knot class; an observation that was already made in \cite[Chapter 3.4]{Polden}. However, even if \Cref{thm:main} generalizes to higher codimension, the corresponding energy threshold is necessarily less then or equal to $c^*$, as we can of course view $\gamma^*$ as a spatial curve. On the other hand, as a consequence of the Fáry--Milnor Theorem \cite{Fary,MilnorKnots}, any curve $\gamma$ with $\CalE(\gamma)\CalL(\gamma)<c^*$ has to be unknotted, see \Cref{rem:5.12} below.
	
\section{Notational preliminaries}
In the following, we will view the $1$-sphere  as $\mathbb{S}^1 =[0,1]/{\sim}$, where $\sim$ denotes the equivalence relation that identifies $0 \sim 1$ and all other points only with themselves. Equivalently,  $\mathbb{S}^1\cong\R/ \Z$. 
Consequently, an interval $[a,b]\subset \mathbb{S}^1$ with $a<b$ has to be understood with respect to this equivalence relation, i.e. $[a,b] = \{ [x]_{\sim} : x \in [a,b] \}$. 
For the sake of simplicity of notation, we define the interval $[b,a]\defeq [b,1]\cup [0,a]$ for $a,b\in [0,1]$ with $a<b$. In the same fashion, the open and half-open intervals are defined.

\begin{defi}
We define for $k, \ell \in \mathbb{N}, p \in [1,\infty]$ the \emph{Sobolev space} $W^{k,p}(\mathbb{S}^1;\R^\ell)$ {as} 
\begin{equation}
W^{k,p}(\mathbb{S}^1;\R^\ell) \defeq \{ u \in W^{k,p}((0,1);\R^\ell) \mid u^{(m)}(0) = u^{(m)}(1) \, \forall m = 1,...,k-1\},
\end{equation}
where $u^{(m)}$ denotes the continuous representative of the $m$-th weak derivative. Moreover, for $k\geq 2$ we denote by $W^{k,p}_{Imm}(\S^1;\R^\ell)$ the set of $W^{k,p}$-immersions.
\end{defi} 
\begin{remark}\label{rem:closedness}
It can be seen that this definition coincides with the general definition of Sobolev spaces on manifolds, cf. \cite[Definition 2.1]{Hebey}. This is why we can also use general results about these spaces and also talk about Sobolev spaces on open subsets of $\mathbb{S}^1$. We will refer to curves in $C^k(\mathbb{S}^1;\mathbb{R}^2)$ as $C^k$-closed, which is also due to the fact that
{
\begin{equation}
C^k(\mathbb{S}^1;\mathbb{R}^2) = \{ u \in C^k([0,1];\mathbb{R}^2) \mid u^{(m)}(0) = u^{(m)}(1) \; \forall m = 0,...,k \}.
\end{equation}
}In particular, each curve $W^{k,p}(\mathbb{S}^1;\mathbb{R}^2)$ is $C^{k-1}$-closed. Observe also that each curve in $C^k(\mathbb{S}^1; \mathbb{R}^2) $ possesses an extension to a $1$-periodic curve in $C^k(\mathbb{R},\mathbb{R}^2)$.
\end{remark}
\begin{remark}\label{rem:13}
Another noticeable property of $W^{k,p}(\mathbb{S}^1)$ is gluing, i.e. if $u \in W^{k,p}((a,b))$ and $v \in W^{k,p}((b,a))$ are such that $u^{(m)}(a)=v^{(m)}(a)$ and $u^{(m)}(b) = v^{(m)}(b)$ for all $m = 0,..., k -1$ then 
\begin{equation}
w(x) \defeq \begin{cases} 
u(x) & x \in (a,b), \\ v(x) & x \in (b,a)
\end{cases}
\end{equation}
lies in $W^{k,p}(\mathbb{S}^1)$. 
\end{remark}

We now review some basic geometric definitions of planar curves. For an immersion $\gamma\colon \S^1\to \R^2$ we write $\gamma(x)=(\gamma_1(x), \gamma_2(x)), x\in \S^1$ for the components and $\gamma^{\prime}=\partial_x \gamma$ for the derivative. Moreover, we write $\kappa\defeq {\abs{\gamma^{\prime}}^{-3}}{\det\left(\gamma^{\prime}, \gamma^{\prime\prime}\right)}$ for its \emph{(signed) curvature}. Another important geometric object is the \emph{arc-length derivative}, denoted by $\partial_s = \abs{\gamma^{\prime}}^{-1}\partial_x$ and the \emph{arc-length element} $\diff s \defeq \abs{\gamma^{\prime}}\diff x$. The \emph{curvature vector field} is $\vKap = \partial_s^2 \gamma = \kappa \vec{n}$, where $\vec{n}$ denotes the \emph{unit normal}, obtained by rotating $\partial_s \gamma$ counterclockwise by $\frac{\pi}{2}$.

\section{A non-existence result for the total curvature}
In this section, we will prove \Cref{thm:infTotalCurv} and show why it implies that there is no non-trivial generalization of \eqref{eq:LYWillmore} involving the total curvature.

\begin{proof}[{Proof of \Cref{thm:infTotalCurv}}]
	By Fenchel's theorem, cf. \Cref{thm:Fenchel}, we have $\CalK(\gamma)\geq 2\pi$ for all $\gamma\in C^{2}(\S^1;\R^2)$. Consequently
	\begin{align}\label{eq:InfTotalnon-embedded}
	\inf \left\lbrace\CalK(\gamma)\mid \gamma\in C^{2}(\S^1;\R^2) \text{ non-embedded immersion} \right\rbrace \geq 2\pi.
	\end{align}
	To prove equality, we take some angle $\beta\in (0,\frac{\pi}{2})$. First, take an arc $S_\beta$ of a circle of radius 1 of length $2\pi-2\beta$ and place it symmetrically with respect to the $x_1$-axis. Extend the segment by the tangent lines at its endpoints. For $\beta\in (0,\frac{\pi}{2})$, they will intersect in a point on the $x_1$-axis. Reflecting everything with respect to that point gives a closed curve $\gamma$ which is not embedded, cf. \Cref{fig:1a}.
	
	\begin{figure}[h]
		\centering
		\subfloat[The construction with $\beta=\frac{\pi}{4}$]{\label{fig:1a}\includegraphics[height=2cm]{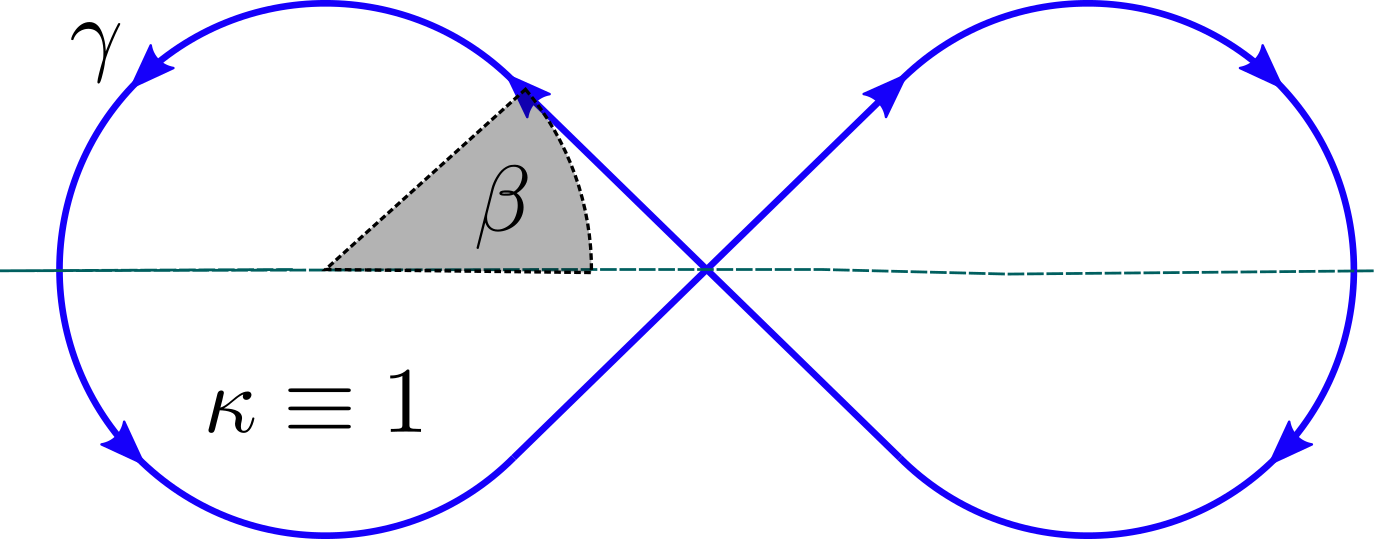}}
		\qquad 
		\subfloat[The curve becomes longer as $\beta\nearrow\frac{\pi}{2}$]{\label{fig:1b}\includegraphics[height=2cm]{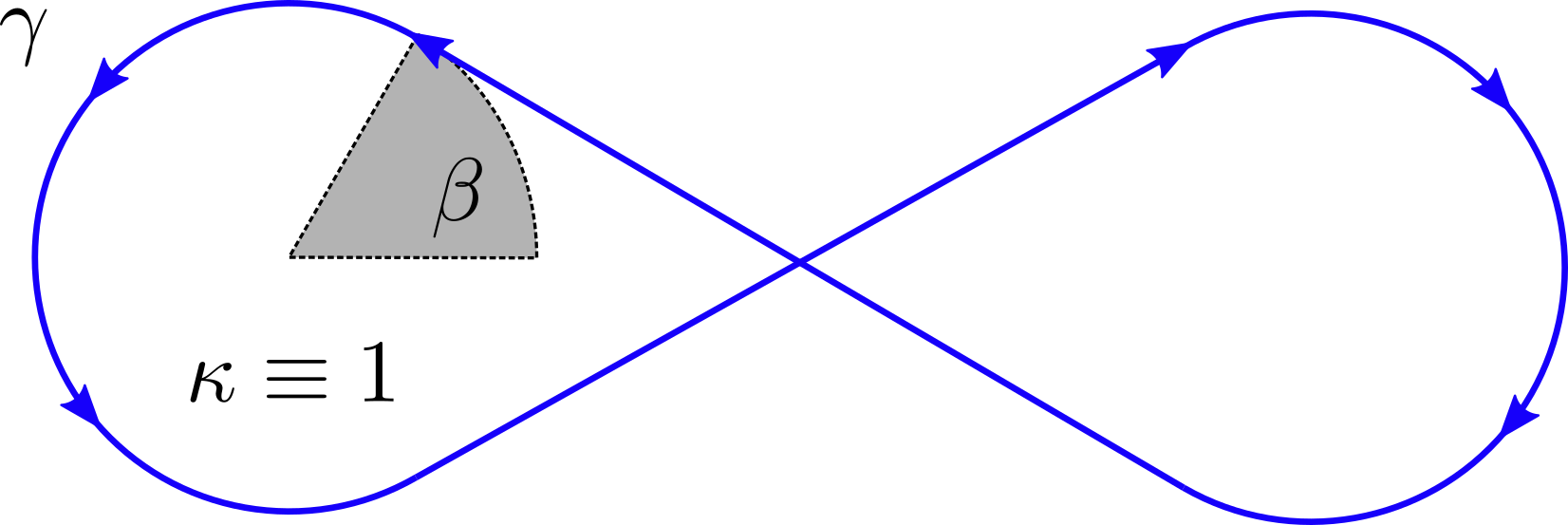}}
		\caption{A non-embedded $W^{2,2}$-curve with approximate total curvature $2\pi$ as $\beta\nearrow\frac{\pi}{2}$.}
	\end{figure}
	
	By the glueing property in \Cref{rem:13}, we have $\gamma\in W^{2,2}(\S^1;\R^2)$. 
	In order to compute $\CalK(\gamma)$ we first note that $\abs{\kappa}\equiv 1 $ on the left circle segment, whereas $\kappa\equiv 0$ on the straight line. Thus by symmetry
	\begin{align}
	\CalK(\gamma) = 2 \int_{S_{\beta}}\diff s = 2(2\pi-2\beta) = 4\pi - 4\beta \searrow 2\pi, \text{ as } \beta\nearrow \frac{\pi}{2}. 
	\end{align}
Using the characterization of the equality in Fenchel's theorem (\Cref{thm:Fenchel}), we may conclude that the infimum in \eqref{eq:InfTotalnon-embedded} is not attained by a non-embedded immersion.
\end{proof}

\Cref{thm:infTotalCurv} shows that $\CalK$ cannot distinguish between embedded and non-embedded immersions, since the least possible energy among all closed curves can be approximated by non-embedded ones. 
	This justifies that \eqref{eq:LYWillmore} has no non-trival generalization to the total curvature.



\section{The variational problem and existence of a minimizer}
In order to prove \Cref{thm:main}, we wish to minimize the functional $\CalE(\gamma)\CalL(\gamma)$ among all curves  $\gamma\in W_{Imm}^{2,2}(\S^1;\R^2)$ which are not embeddings. 

 As a first step, we want to {characterize embeddedness in a way that is useful for variational discussions}.
\begin{lem}\label{lem:embeddVSInjective}
	An immersed curve $\gamma\in C^1(\mathbb{S}^{1},\R^{2})$ is an embedding, if and only if $\gamma$ is injective. 
\end{lem}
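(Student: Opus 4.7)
The forward direction is essentially definitional: an embedding is by definition an immersion that is a homeomorphism onto its image, and homeomorphisms are bijective, hence injective.

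For the non-trivial direction, the plan is to invoke the standard topological fact that a continuous injection from a compact space into a Hausdorff space is automatically a homeomorphism onto its image. Concretely, I would first note that by the Sobolev embedding $W^{2,2}(\mathbb{S}^1;\R^2)\hookrightarrow C^1(\mathbb{S}^1;\R^2)$ (using the closedness property recorded in \Cref{rem:closedness}), any $\gamma \in W^{2,2}_{Imm}(\mathbb{S}^1;\R^2)$ is in particular continuous. Since $\mathbb{S}^1$ is compact and $\R^2$ is Hausdorff, injectivity of $\gamma$ forces $\gamma\colon \mathbb{S}^1\to \gamma(\mathbb{S}^1)$ (with $\gamma(\mathbb{S}^1)$ equipped with the subspace topology) to be a continuous bijection from a compact space to a Hausdorff space, hence a homeomorphism.

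Combined with the immersion property $|\gamma'|>0$ that is already part of the assumption $\gamma \in W^{2,2}_{Imm}(\mathbb{S}^1;\R^2)$, this shows that $\gamma$ is an embedding in the sense used here.

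I do not expect any genuine obstacle; the only small subtlety is making sure that the notion of embedding used in the paper is the one meaning \emph{immersion which is a homeomorphism onto its image}, so that no additional regularity of the inverse needs to be checked. Given the $W^{2,2}$-setup of the paper, this is the natural convention, and the proof above is complete in a few lines.
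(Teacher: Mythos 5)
Your proof is correct and is essentially the paper's argument: the paper simply cites \cite[Proposition 5.4]{LeeSM}, whose content is exactly the fact you prove, namely that an injective (continuous) immersion of a compact space is a homeomorphism onto its image by the standard compact-to-Hausdorff argument. Your extra remark on the $W^{2,2}\hookrightarrow C^1$ embedding is a harmless and reasonable way to adapt that smooth-category statement to the Sobolev setting.
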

\begin{proof}
	See for instance \cite[Proposition 5.4]{LeeSM}.
\end{proof}

This implies the following very useful characterization of embeddedness.

\begin{lem}[Characterization of Embeddedness]\label{lem:EmbeddChara}
	Let $\gamma\in C^1(\mathbb{S}^{1};\R^{2})$ be an immersion. Then $\gamma$ is embedded if and only if  $A[\gamma]\defeq \inf_{x\neq y} \frac{\abs{\gamma(x)-\gamma(y)}}{\abs{x-y}}>0$. 
\end{lem}
\begin{proof}
	Suppose $A[\gamma]>0$. Then, we have $\abs{\gamma(x)-\gamma(y)}\geq A[\gamma]\abs{x-y} >0$ for $x\neq y\in \mathbb{S}^{1}$. Thus $\gamma$ is injective, hence an embedding by \Cref{lem:embeddVSInjective}.
	
	Conversely, suppose $\gamma$ is an embedding and $A[\gamma]=0$. Then, there exist $x_n \neq y_n$ such that $\frac{\abs{\gamma(x_n)-\gamma(y_n)}}{\abs{x_n -y_n}}\to 0$. Passing to a subsequence, we have $x_n \to x, y_n \to y$ for $x,y\in \mathbb{S}^{1}$ by compactness. If $x\neq y$, we have
	\begin{align}\label{eq:gammaxgammay}
		0 =\lim_{n \to \infty}\frac{\abs{\gamma(x_n)-\gamma(y_n)}}{\abs{x_n -y_n}} = \frac{\abs{\gamma(x)-\gamma(y)}}{\abs{x-y}},
	\end{align}
	hence $\gamma(x) = \gamma(y)$. This is a contradiction to the embeddedness of $\gamma$, cf. \Cref{lem:embeddVSInjective}. Hence $x=y$ in $\S^1$ and for $i=1,2$ we have
	\begin{align}
		\gamma_i(x_n)-\gamma_i(y_n) = \gamma_i^{\prime}(\xi_{i,n})(x_n-y_n)
	\end{align}
	for some $\xi_{i,n}\in \S^1$ between $x_n$ and $y_n$. Dividing by $x_n-y_n$ and using the assumption, we find
	\begin{align}
		\gamma_i^{\prime}(x)=\lim_{n\to\infty} \gamma_i^{\prime}(\xi_{i,n}) = \lim_{n\to\infty} \frac{\gamma_i(x_n)-\gamma_i(y_n)}{x_n-y_n} = 0,
	\end{align}
	for $i=1,2$, a contradiction to $\gamma$ being an immersion.
%
\end{proof}
An important consequence is the following lemma.
\begin{lem}\label{lem:embeddingsopen}
	The set of $C^1$-embeddings is an open subset of $C^{1}(\mathbb{S}^{1};\R^{2})$.
\end{lem}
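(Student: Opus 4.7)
Let $\gamma \in C^1(\mathbb{S}^1;\R^2)$ be an embedding. The plan is to produce an explicit $C^1$-radius $\varepsilon>0$ around $\gamma$ inside which every curve is again an embedding, by splitting the ``non-diagonal'' set of pairs $(x,y)\in\mathbb{S}^1\times \mathbb{S}^1$ into a part close to the diagonal and a part bounded away from it, and controlling $A[\,\cdot\,]$ from \Cref{lem:EmbeddChara} on each piece separately. Since the proof of \Cref{lem:EmbeddChara} only uses that $\gamma$ is a $C^1$-immersion, that characterization remains available here.

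The first, easy, observation is that the immersion property is open in $C^1$: since $c_1 \defeq \min_{\mathbb{S}^1}|\gamma'|>0$, any $\tilde\gamma$ with $\|\tilde\gamma-\gamma\|_{C^1}<c_1/4$ still satisfies $|\tilde\gamma'|\geq 3c_1/4>0$, hence is a $C^1$-immersion. It therefore remains to preserve injectivity under small $C^1$-perturbations.

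For the diagonal part, I would use the modulus of continuity $\omega$ of $\gamma'$ and choose $\delta\in (0,\tfrac12)$ so small that $\omega(\delta)<c_1/4$. By the fundamental theorem of calculus (on the shorter arc connecting $x$ and $y$ in $\mathbb{S}^1$, using the periodic extension from \Cref{rem:closedness}) one has, whenever $\mathrm{dist}_{\mathbb{S}^1}(x,y)<\delta$,
\begin{equation}
\left| \frac{\gamma(x)-\gamma(y)}{x-y} - \gamma'(y) \right| \leq \int_0^1 |\gamma'(y+t(x-y))-\gamma'(y)|\dt \leq \omega(\delta)<\frac{c_1}{4},
\end{equation}
so $|\gamma(x)-\gamma(y)|/|x-y|\geq 3c_1/4$. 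The same computation for $\tilde\gamma-\gamma$ only costs $\|\tilde\gamma'-\gamma'\|_\infty$, so if $\|\tilde\gamma-\gamma\|_{C^1}<c_1/4$ we retain $|\tilde\gamma(x)-\tilde\gamma(y)|/|x-y|\geq c_1/2$ on pairs close to the diagonal.

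For the off-diagonal part, the compact set $K_\delta\defeq\{(x,y)\in\mathbb{S}^1\times\mathbb{S}^1 : \mathrm{dist}_{\mathbb{S}^1}(x,y)\geq \delta\}$ carries the continuous, strictly positive function $(x,y)\mapsto |\gamma(x)-\gamma(y)|$ (injectivity of $\gamma$ plus \Cref{lem:embeddVSInjective}), so it admits a positive minimum $m_\delta>0$. The triangle inequality then yields $|\tilde\gamma(x)-\tilde\gamma(y)|\geq m_\delta-2\|\tilde\gamma-\gamma\|_\infty>0$ on $K_\delta$ as soon as $\|\tilde\gamma-\gamma\|_\infty<m_\delta/2$. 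Choosing finally $\varepsilon\defeq \min\{c_1/4,\, m_\delta/2\}$, any $\tilde\gamma$ with $\|\tilde\gamma-\gamma\|_{C^1}<\varepsilon$ is a $C^1$-immersion and satisfies $A[\tilde\gamma]>0$, hence is an embedding by \Cref{lem:EmbeddChara}. The only mildly delicate step is the near-diagonal estimate, because one must interpret $x-y$ via the shorter arc on $\mathbb{S}^1$; apart from that, the argument is a standard compactness-plus-linearization pattern.
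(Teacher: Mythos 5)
Your proof is correct, but it takes a different route from the paper. The paper's argument is a three-line computation: since $\gamma$ is embedded, \Cref{lem:EmbeddChara} gives $A[\gamma]>0$, and then for any $\tilde\gamma$ with $\Norm{\tilde\gamma-\gamma}{C^1}<\varepsilon$ the global Lipschitz-type bound $\sup_{x\neq y}\abs{(\tilde\gamma-\gamma)(x)-(\tilde\gamma-\gamma)(y)}/\abs{x-y}\leq \Norm{\tilde\gamma-\gamma}{C^1}$ yields $A[\tilde\gamma]\geq A[\gamma]-\varepsilon>0$, so $\tilde\gamma$ is embedded by the same characterization. You instead bypass the ``embedded $\Rightarrow A[\gamma]>0$'' direction entirely and re-derive a quantitative lower bound for the difference quotient of the perturbed curve by splitting into a near-diagonal region (controlled by $\min\abs{\gamma'}$ and the modulus of continuity of $\gamma'$, stable under $C^1$-perturbation) and an off-diagonal region (controlled by compactness and injectivity, stable under $C^0$-perturbation); only the easy direction $A[\tilde\gamma]>0\Rightarrow$ embedded is used at the end. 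What your version buys is an explicit radius $\varepsilon$ and, in effect, a uniform lower bound on $A$ in a $C^1$-neighborhood; what it costs is length, since the compactness argument you run near and away from the diagonal essentially duplicates the one already carried out in the proof of \Cref{lem:EmbeddChara}, which the paper simply reuses. Your handling of the two minor delicate points is fine: the near-diagonal estimate along the shorter arc via the periodic extension, and (implicitly) the fact that on the off-diagonal set the denominator $\abs{x-y}$ is bounded above, so positivity of $\abs{\tilde\gamma(x)-\tilde\gamma(y)}$ there indeed gives a positive infimum of the quotient.
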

\begin{proof}
	Suppose $\gamma\in C^{1}(\mathbb{S}^{1};\R^{2})$ is an embedding. We claim that for  $\varepsilon>0$ small enough any $\tilde{\gamma}\in C^{1}(\S^1;\R^2)$ with $\Norm{\tilde{\gamma}-\gamma}{C^1} <\varepsilon$ is an embedding. By \Cref{lem:EmbeddChara}, it suffices to show $A[\tilde{\gamma}]>0$, since $\tilde{\gamma}$ is clearly an immersion for $\varepsilon>0$ small enough. We have for $x\neq y\in \S^1$
	\begin{align}
		\frac{\abs{\tilde{\gamma}(x)-\tilde{\gamma}(y)}}{\abs{x-y}} &\geq  \frac{\abs{\gamma(x)-\gamma(y)} - \abs{\tilde{\gamma}(x)-\gamma(x)-(\tilde{\gamma}(y))-\gamma(y))}}{\abs{x-y}} \\
		&\geq A[\gamma] - \sup_{x\neq y}\frac{\abs{\tilde{\gamma}(x)-\gamma(x)-(\tilde{\gamma}(y))-\gamma(y))}}{\abs{x-y}} \\
		&\geq A[\gamma] - \Norm{\tilde{\gamma}-\gamma}{C^{1}} \\
		&\geq  A[\gamma]- \varepsilon >0
	\end{align}
	if $\varepsilon>0$ is small enough.
\end{proof}

	\begin{remark}\label{rem:embeddings ab}
		In the proofs of \Cref{lem:EmbeddChara} and \Cref{lem:embeddingsopen}, we did not really use the specific structure of $\S^1$. In particular, the statements of \Cref{lem:EmbeddChara} and \Cref{lem:embeddingsopen} remain true if one replaces $\S^1$ by any compact interval $[a,b]\subset \R$.
	\end{remark}

We now consider a minimization problem, cf. \eqref{eq:gelato}. Define the set
\begin{align}
\mathcal{A} \defeq \{ \gamma\in W^{2,2}(\S^1;\R^2)\mid \gamma\text{ is a non-injective immersion}\}\subset W^{2,2}(\S^1;\R^2).
\end{align}
\begin{thm}[Existence of a Minimizer]\label{lem:variational}
	There exists $\bar{\gamma}\in \CalA$ with
	\begin{align}\label{eq:infprob}
		\CalE(\bar{\gamma})\CalL(\bar{\gamma}) = \inf_{\gamma\in \mathcal{A}} \CalE(\gamma)\CalL(\gamma)>0.
	\end{align}
\end{thm}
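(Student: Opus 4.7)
The plan is to use the direct method in the calculus of variations. The main difficulty will be that the admissible set $\mathcal{A}$ is defined by a closed (rather than open) condition—non-injectivity—so weak limits of minimizing sequences might collapse their self-intersection points.

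First, I would exploit the scale and translation invariance of $\mathcal{E}\mathcal{L}$ to normalize. Take a minimizing sequence $(\gamma_n)\subset\mathcal{A}$. Since $\mathcal{E}\mathcal{L}$ is invariant under $\gamma\mapsto\lambda\gamma+v$ for $\lambda>0,v\in\R^2$, I may assume $\mathcal{L}(\gamma_n)=1$ and $\gamma_n(0)=0$. After reparametrizing proportionally to arc-length on $\S^1=[0,1]/{\sim}$, we have $|\gamma_n'|\equiv 1$, hence $\gamma_n$ is bounded in $C^1$. Writing $\mathcal{E}(\gamma_n)=\int_0^1|\gamma_n''|^2\diff x$ in the unit-speed parametrization and using that $\mathcal{E}(\gamma_n)\mathcal{L}(\gamma_n)$ is bounded, this gives a uniform bound on $\gamma_n$ in $W^{2,2}(\S^1;\R^2)$.

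Next, by Banach--Alaoglu and Rellich--Kondrachov, a subsequence satisfies $\gamma_n\rightharpoonup\bar\gamma$ weakly in $W^{2,2}$ and strongly in $C^1(\S^1;\R^2)$. The $C^1$ convergence yields $|\bar\gamma'|\equiv 1$, so $\bar\gamma\in W^{2,2}_{Imm}(\S^1;\R^2)$ with $\mathcal{L}(\bar\gamma)=1$. Weak $L^2$-lower semicontinuity of $\|\cdot\|_{L^2}^2$ applied to $\gamma_n''\rightharpoonup\bar\gamma''$ gives $\mathcal{E}(\bar\gamma)\le\liminf_n\mathcal{E}(\gamma_n)$, so $\mathcal{E}(\bar\gamma)\mathcal{L}(\bar\gamma)\le\inf_{\mathcal{A}}\mathcal{E}\mathcal{L}$. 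To see that the infimum is strictly positive, combine Cauchy--Schwarz with Fenchel's theorem: $\mathcal{K}(\gamma)^2=\left(\int|\kappa|\diff s\right)^2\le \mathcal{L}(\gamma)\mathcal{E}(\gamma)$ and $\mathcal{K}(\gamma)\ge2\pi$ yield $\mathcal{E}\mathcal{L}\ge 4\pi^2>0$.

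The main obstacle is to confirm that $\bar\gamma$ is itself non-injective, i.e.\ $\bar\gamma\in\mathcal{A}$. For each $n$ pick $x_n\neq y_n$ in $\S^1$ with $\gamma_n(x_n)=\gamma_n(y_n)$; by compactness (after a further subsequence) $x_n\to x^*,\ y_n\to y^*$, and uniform convergence gives $\bar\gamma(x^*)=\bar\gamma(y^*)$. If $x^*\neq y^*$ we are done. The subtle case is $x^*=y^*$, where the double points coalesce. I would rule this out by lifting to $\R$ locally near $x^*$ and writing
\begin{equation}
0=\gamma_n(y_n)-\gamma_n(x_n)=\int_{x_n}^{y_n}\gamma_n'(t)\diff t,
\end{equation}
so that $(y_n-x_n)^{-1}\int_{x_n}^{y_n}\gamma_n'(t)\diff t=0$. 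Since $\gamma_n\to\bar\gamma$ in $C^1$ and $x_n,y_n\to x^*$, the left-hand side converges to $\bar\gamma'(x^*)$, forcing $\bar\gamma'(x^*)=0$ and contradicting $|\bar\gamma'|\equiv 1$. Hence $x^*\neq y^*$, $\bar\gamma$ is non-injective, and the minimum is realized.
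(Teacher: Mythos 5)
Your proposal is correct and follows essentially the same route as the paper: normalize by scaling and translation to unit length and unit speed, apply the direct method with weak $W^{2,2}$- and strong $C^1$-compactness, and use weak lower semicontinuity of the $L^2$-norm. The only difference is in packaging: your difference-quotient argument ruling out coalescing double points (forcing $\bar\gamma'(x^*)=0$) is precisely the content of the paper's \Cref{lem:EmbeddChara} and \Cref{lem:embeddingsopen}, which the paper invokes instead of arguing inline, and your positivity bound via Cauchy--Schwarz and Fenchel replaces the paper's simpler remark that $\R^2$ admits no closed geodesics.
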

\begin{proof}
	We consider the set $\tilde{\mathcal{A}}\defeq \{\gamma\in \mathcal{A}\mid \CalL(\gamma)=1\}\neq \emptyset$. As a first step, we show that there exist $\bar\gamma\in \tilde{\mathcal{A}}$ with 
	\begin{align}\label{eq:constVariationalProblem}
		\CalE(\bar{\gamma}) = \inf_{\gamma\in \tilde{\mathcal{A}}} \CalE(\gamma).
	\end{align} 
	Let $\left(\gamma^{(n)}\right)_{n\in \N}$ be a minimizing sequence for \eqref{eq:constVariationalProblem}. Without loss of generality, we may assume $\gamma^{(n)}$ to be parametrized by arc-length for all $n\in \N$, cf. \Cref{lem:reparaConstSpeed}. By reflexivity and the compactness of the embedding $W^{2,2}(\S^1;\R^2)\hookrightarrow C^1(\S^1;\R^2)$, we have $\gamma^{(n)}\rightharpoonup \bar\gamma$ in $W^{2,2}(\S^1;\R^2)$ and $\gamma^{(n)} \to \bar\gamma$ in $C^1(\S^1; \R^2)$ for some $\bar\gamma\in W^{2,2}(\S^1;\R^2)$, passing to a subsequence. We have $\abs{\gamma^{\prime}_n(x)} = 1$ for all $x\in \S^1, n \in \N$, thus $\bar{\gamma}$ is parametrized with unit speed and $\CalL(\bar\gamma)=1$. Moreover, by \Cref{lem:embeddingsopen}, $\bar\gamma$ cannot be an embedding as $\gamma^{(n)}\to \bar{\gamma}$ in $C^1(\S^1;\R^2)$. Consequently we have $\bar\gamma\in \tilde{\mathcal{A}}$ by \Cref{lem:embeddVSInjective}.
	
	For unit speed curves $\gamma\in W^{2,2}(\S^1;\R^2)$, the elastic energy is given by
	\begin{align}
		\CalE(\gamma) = \int_{\S^1}\abs{\gamma^{\prime\prime}(x)}^2 \diff x.
	\end{align}
	Since the $L^2(\S^1;\R^2)$-norm is weakly lower semicontinuous, we conclude
	\begin{align}
		\CalE(\bar \gamma)\leq \liminf_{n\to\infty} \CalE(\gamma_n) = \inf_{\gamma\in \tilde{\mathcal{A}}} \CalE(\gamma),
	\end{align}
	so $\bar \gamma$ is a minimizer. Since $\R^2$ does not allow for closed geodesics one infers that $\CalE(\bar{\gamma})>0$.
	
	Now, if $\gamma\in W^{2,2}(\S^1;\R^2)$ is an arbitrary immersion, we may consider the rescaled curve $\tilde{\gamma}\defeq \frac{1}{\CalL(\gamma)} \gamma$, so $\tilde{\gamma}\in \tilde{\mathcal{A}}$. Note that $\gamma$ is an embedding if and only if $\tilde{\gamma}$ is an embedding.
	Consequently, we have
	\begin{align}
		\CalE(\gamma)\CalL(\gamma) = \CalE(\tilde{\gamma}) \geq \CalE(\bar{\gamma}) = \CalE(\bar{\gamma})\CalL(\bar{\gamma}),
	\end{align}
	and hence $\inf_{\gamma\in \mathcal{A}}\CalE(\gamma)\CalL(\gamma) = \CalE(\bar{\gamma})\CalL(\bar{\gamma})$.
\end{proof}

\section{The Euler-Lagrange equation}
{In this section, we will study the properties of a minimizer $\bar{\gamma}$ from \Cref{lem:variational} in order to prove our main theorem.}
 The most important property we will derive is that $\overline{\gamma}$ is a \emph{constrained elastica}, i.e. $\bar{\gamma} \in C^\infty(\mathbb{S}^1;\mathbb{R}^2)$ and 
\begin{equation}\label{eq:elasticaeq}
\partial_s^2\kappa + \frac{1}{2} \kappa^3 - \lambda \kappa = 0 \quad \text{for some }\lambda \in \R.
\end{equation}
Solutions of the \emph{constrained elastica equation} have been classified in previous works, eg. by \cite{LangerSinger1,DHV}. What one needs to show for this is that $\bar{\gamma}$ satisfies the \emph{Euler--Lagrange equation}, i.e. for all $\phi \in C^\infty(\mathbb{S}^1;\mathbb{R}^2)$ one has 
\begin{equation}\label{eq:EulerLag}
\CalL(\bar{\gamma})D\CalE(\bar{\gamma})(\phi) + \CalE(\bar{\gamma}) D\CalL(\bar{\gamma})(\phi) = 0.
\end{equation}
Indeed, as one can see following the lines of \cite[Section 5]{EichmannGr}, equation \eqref{eq:EulerLag} implies that $\overline{\gamma}$ is smooth and  \eqref{eq:elasticaeq} holds. If $\bar{\gamma}$ is an {\emph{interior point}} of $\mathcal{A}$ then \eqref{eq:EulerLag}  follows from the fact that for all $\phi \in C^\infty(\mathbb{S}^1;\mathbb{R}^2)$  one has 
\begin{equation}\label{eq:variation}
0 = \frac{\diff}{\diff \varepsilon}\Bigg\vert_{\varepsilon= 0} \mathcal{E}(\bar{\gamma}+ \varepsilon \phi) \mathcal{L}(\bar{\gamma} + \varepsilon \phi),
\end{equation}
{since $(-\varepsilon_0, \varepsilon_0)\ni \varepsilon\mapsto\CalE(\bar{\gamma}+\varepsilon\phi)\CalL(\bar{\gamma}+\varepsilon\phi)$ attains a minimum at $\varepsilon=0$.}
The problem is that we do not know to begin with whether the infimum is attained at an interior point of $\mathcal{A}$.
The goal of this section is to prove exactly this.
 \begin{figure}[h]
 	\centering
 	\includegraphics[width=0.5\textwidth]{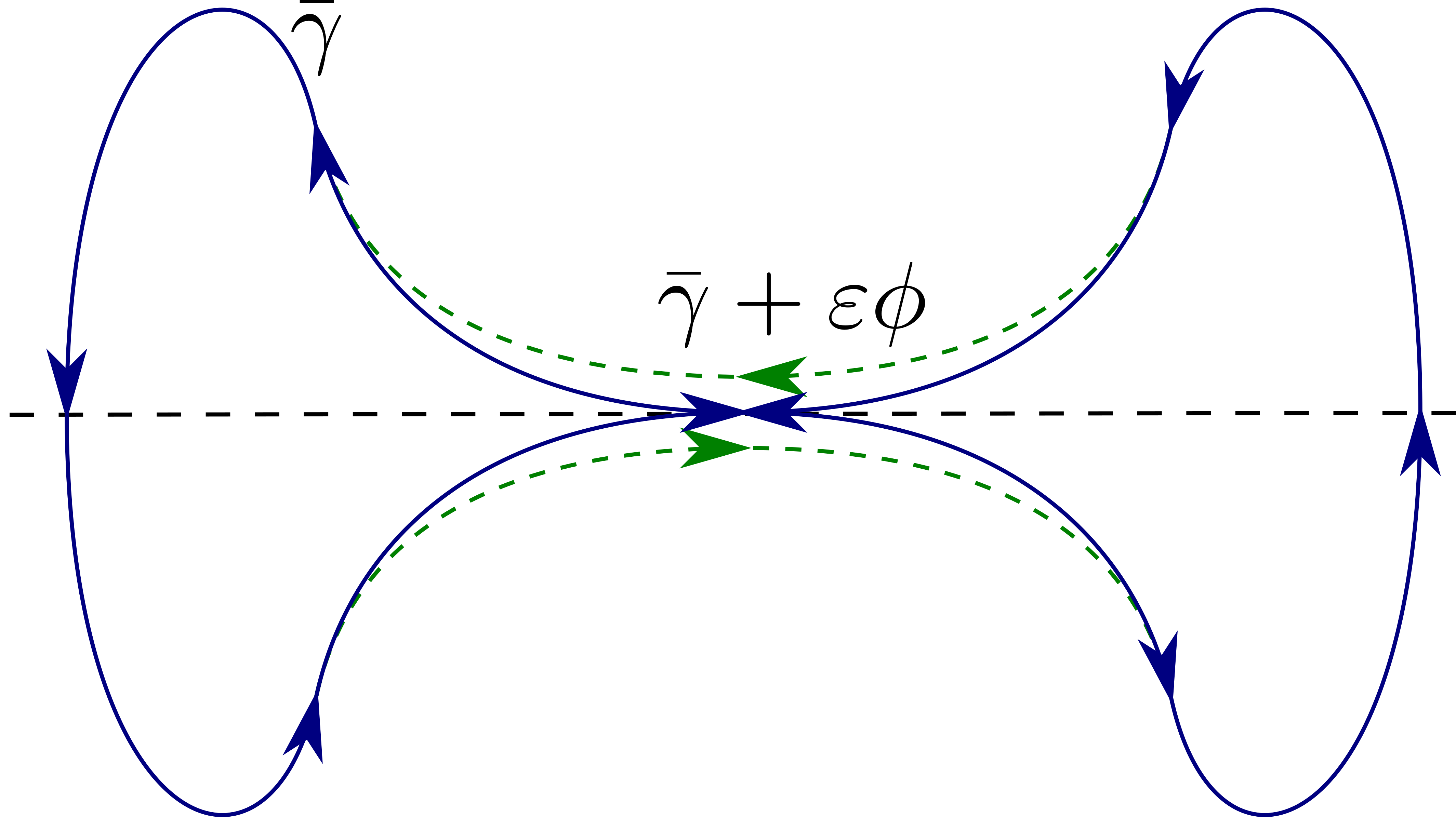}
 	\caption{A tangential self-intersection vanishing under small perturbations.}
 	\label{fig:tangentialSelfIntersection}
 \end{figure}
 
 We remark that $\mathcal{A}\subset W^{2,2}(\S^1;\R^2)$ is not open. Indeed, like in \Cref{fig:tangentialSelfIntersection} it is possible to leave the set $\mathcal{A}$ by a variation that eliminates self-intersections. To show that {each minimizer} is an interior point we need to examine these 
  self-intersections, denoted by 
 \begin{equation}
 S[\gamma] \defeq \{ p \in \mathbb{R}^2 : \exists  \; x_1 \neq x_2 \; s.t.  \; \gamma(x_1) = \gamma(x_2) = p \}.
 \end{equation}
 We also define the multiplicity of $p \in S[\gamma]$ to be $\mathrm{mult}[\gamma](p) \defeq \mathcal{H}^0(\gamma^{-1}(\{p\}))$ where $\mathcal{H}^0$ denotes the counting measure. Moreover we have to pay special attention to the tangential self-intersections given by
 \begin{equation}
 S_{tan}[\gamma] \defeq \{ p \in \mathbb{R}^2 : \exists \; x_1 \neq x_2  \; s.t. \; \gamma(x_1) = \gamma(x_2) = p, \; \mathrm{det}(\gamma'(x_1),\gamma'(x_2)) =0 \}.
 \end{equation}
Before we can proceed with the proof we need some preparations that exclude certain configurations.
\subsection{Some facts about constrained elasticae}\label{sec:31}
In this section, we will discuss properties of closed elasticae. We remark that at this point we do not know whether the minimizer $\bar{\gamma}$ is a constrained elastica. However, the analysis of the energy and the self-intersection properties of closed elasticae will play a crucial role in proving \Cref{thm:main} later. More precisely, we will be able to exclude self-intersection properties of minimizers $\bar{\gamma}$ once we can exclude them for non-embedded constrained elasticae.
Define
\begin{equation}\label{eq:defB}
\mathcal{B}\defeq \{ \gamma \in C^\infty(\mathbb{S}^1;\mathbb{R}^2)  : \gamma \; \textrm{is a non-embedded constrained elastica} \} \subset \mathcal{A}.
\end{equation}
Throughout this section we will use the elliptic functions defined in Appendix \ref{appendix:Jacobi}. 
We next define one special elastica, which will be important for our considerations.
\begin{defi}[The Elastic Figure Eight]\label{def:figeight}
Let $m^* \in (0,1)$ be the unique root of the map $(0,1) \ni m \mapsto 2 E(m)- K(m)$, see Lemma \ref{lem:uniqueroot}. We define the \emph{one-fold cover of the elastic figure eight} by
\begin{equation}\label{eq:fiigeight}
\gamma^* :[0,4K(m^*)] \rightarrow \mathbb{R}^2 , \quad  \gamma^*(s) \defeq \begin{pmatrix}
2 E(\am(s,m^*), m^*) - s \\ -2 \sqrt{m^*} \cn(s,m^*) 
\end{pmatrix}.
\end{equation}
\end{defi}
\begin{remark}\label{rem:value m*}
	The numerical value of $m^*$ is approximately $0.8261$. This is not needed in the sequel, since all computations we provide are analytical. However, it enables us to approximate the value of $c^*$.
\end{remark}
\begin{remark}\label{rem:3.2}
We will also look at another parametrization of the figure eight that is easier for computations e.g. to compute self-intersections. For this observe that by \eqref{eq:fiigeight} and Appendix \ref{appendix:Jacobi}
\begin{equation}
\gamma^*(F(x,m^*)) = \begin{pmatrix}
2 E(x, m^*) - F(x,m^*) \\ -2 \sqrt{m^*} \cos(x) 
\end{pmatrix}.
\end{equation}
and note that $F(\cdot,m^*)$ is strictly monotone with $F(0,m^*) = 0$ and $F(2\pi,m^*) = 4 K(m^*)$. Hence 
\begin{equation}
\tilde{\gamma}^{\ast} : [0,2\pi] \rightarrow \mathbb{R}^2 , \quad \tilde{\gamma}^{\ast}(x) \defeq  \begin{pmatrix}
2 E(x, m^*) - F(x,m^*) \\ -2 \sqrt{m^*} \cos(x) 
\end{pmatrix}
\end{equation} 
is also a parametrization of the figure eight{.}
\end{remark}
We show next that the elastic figure eight is smoothly closed and minimizes our functional in $\mathcal{B}$. It will turn out later that it is actually also a minimizer in $\mathcal{A}$.

\begin{lem}[Characterization of Closed Elasticae]\label{lem:charelas}
The only closed constrained elasticae are (possibly rescaled, rotated, translated and reparametrized versions of) multi-fold coverings of circles and multi-fold coverings of the figure eight. All of these elasticae are not embedded except for the one-fold covering of the circle. Moreover,
\begin{equation}\label{eq:infprob_constrElastica}
\inf_{\gamma \in \mathcal{B}}  \mathcal{E}(\gamma) \mathcal{L}(\gamma) = \mathcal{E}(\gamma^*) \mathcal{L}(\gamma^*),
\end{equation}
with $\gamma^{\ast}$ as in \Cref{def:figeight}. Equality holds if and only if $\gamma$ is a rescaled, translated and rotated reparametrization of $\gamma^*$.
\end{lem}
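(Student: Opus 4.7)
My plan is to combine the classification of closed planar constrained elasticae with a direct elliptic-integral computation.

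First, I would appeal to the classification of closed planar constrained elasticae in \cite{LangerSinger1, DHV, AnnaNetworks}, which states that up to rescaling, translation, rotation and reparametrization they exhaust exactly the multi-fold coverings of the round circle and of the elastic figure eight. For the wavelike (non-circular) branch the solutions of \eqref{eq:elasticaeq} have curvature $\kappa(s) = 2\sqrt{m}\,\cn(s,m)$ for some $m\in(0,1)$, which one verifies by plugging the ansatz into the ODE and comparing coefficients of $\cn$ and $\cn^3$; this also pins down $\lambda = 2m-1$ and the amplitude $2\sqrt{m}$.

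Next, I would derive the closure condition and identify $\gamma^*$. Integrating the Frenet equations using standard Jacobi-elliptic identities, after one period $4K(m)$ of $\kappa$ the tangent vector returns to itself and $\gamma$ is translated by a vector whose component along its symmetry axis is proportional to $2E(m)-K(m)$ while the perpendicular component vanishes by symmetry. Thus $\gamma$ closes up (with a single fundamental period) precisely when $2E(m)-K(m)=0$, which by \Cref{lem:uniqueroot} selects the unique modulus $m^*$. The explicit parametrization $\gamma^*$ in \Cref{def:figeight} then coincides with this solution, cf. \Cref{rem:3.2}. Non-embeddedness of the $k$-fold circle covers for $k \geq 2$ is immediate, and for $\gamma^*$ one checks the self-intersection at the origin via $\gamma^*(K(m^*)) = \gamma^*(3K(m^*)) = 0$, using $\cn(K(m^*),m^*) = 0$ and the closure relation $K(m^*) = 2E(m^*)$.

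Third, I would compute $c^* = \mathcal{E}(\gamma^*)\mathcal{L}(\gamma^*)$ explicitly. Using the identity $\int_0^{K}\cn^2(s,m)\,\diff s = (E(m)-(1-m)K(m))/m$, one obtains
\begin{equation}
\mathcal{E}(\gamma^*) = 4m^*\int_0^{4K(m^*)}\cn^2(s,m^*)\,\diff s = 16\bigl(E(m^*)-(1-m^*)K(m^*)\bigr),
\end{equation}
which via $K(m^*) = 2E(m^*)$ becomes $16\,E(m^*)(2m^*-1)$; together with $\mathcal{L}(\gamma^*) = 4K(m^*) = 8E(m^*)$ this yields
\begin{equation}
c^* = 128\, E(m^*)^2(2m^*-1).
\end{equation}

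Finally, I would compare with all other elements of $\mathcal{B}$. A scaling argument shows that a $k$-fold cover of a fixed shape multiplies $\mathcal{E}\mathcal{L}$ by $k^2$, so in each branch the minimum is attained at $k=1$. For the circle branch the smallest non-embedded value is $16\pi^2 = 4\pi^2 \cdot 4$ (the $2$-fold cover). Since the numerical value of $c^*$ satisfies $c^* \simeq 112.44 < 157.9 \simeq 16\pi^2$, the infimum in $\mathcal{B}$ is attained precisely at $\gamma^*$, uniquely up to rigid motions, scaling and reparametrization. The main technical obstacle is the closure analysis in the second paragraph: identifying the translation-per-period as a constant multiple of $2E(m)-K(m)$ requires careful integration of the Frenet equations using the full Jacobi-function machinery. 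Once that reduction is done, the remaining steps consist only of standard elliptic-integral identities and a straightforward scaling comparison.
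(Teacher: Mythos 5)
Your overall route is the same as the paper's: identify the wavelike branch, derive the closure condition $2E(m)-K(m)=0$ selecting $m^*$, verify the self-intersection of $\gamma^*$ at the origin, compute $\mathcal{E}(\gamma^*)\mathcal{L}(\gamma^*)=128(2m^*-1)E(m^*)^2$ (your formula agrees with \eqref{eq:ELOPT}), and compare with the doubly covered circle at $16\pi^2$, using the $k^2$-scaling of $\mathcal{E}\mathcal{L}$ under $k$-fold covering to dispose of higher covers. One difference is that you outsource the statement ``the only closed planar constrained elasticae are circles and figure eights'' to the literature, while your own closure analysis treats only the wavelike branch; the paper instead proves this part self-containedly from \Cref{prop:elaclassi}, in particular excluding closed orbitlike elasticae via the inequality of \Cref{lem:4.5}. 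Citing Langer--Singer here is legitimate, just less self-contained.

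The genuine gap is the final strict inequality $c^*<16\pi^2$. You justify it by quoting the numerical values $c^*\simeq 112.44$ and $16\pi^2\simeq 157.9$, but $m^*$ is defined only implicitly as the root of $2E(m)-K(m)$, so ``$c^*\simeq 112.44$'' is not a proof unless you supply verified bounds on $m^*$ and on $E(m^*)$ (e.g.\ rigorous interval enclosures); as written this step does not close. The paper avoids exactly this by an analytic estimate: by \Cref{lem:Heumanlambda} one has $E(m)\leq \frac{\pi}{2\sqrt{2}}\sqrt{2-m}$, hence $128(2m^*-1)E(m^*)^2\leq 16(2m^*-1)(2-m^*)\pi^2$, and since $g(z)=(2z-1)(2-z)$ is strictly increasing on $(0,1]$ with $g(1)=1$ and $m^*<1$, one gets $c^*<16\pi^2$ without ever knowing the numerical value of $m^*$ (cf.\ \Cref{rem:value m*}, where the authors stress that all comparisons are analytic). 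Note also that the cheap bound $E(m)\leq \frac{\pi}{2}$ is not sufficient here (it only gives $c^*\leq 32\pi^2$), so some genuinely sharper input like \Cref{lem:Heumanlambda} is needed; you should replace the numerical comparison by such an estimate.
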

\begin{proof}
First we show the assertion that the only closed elasticae are given by the figure eight and the circle. By Proposition \ref{prop:elaclassi} there are --- up to scaling, isometries in $\mathbb{R}^2$, and reparametrization --- only five different types of elasticae which we all examine separately for closedness. 

\textbf{Type 1: Linear elasticae.} Since lines are not closed they cannot generate closed elasticae. 

\textbf{Type 2: Wavelike elasticae.} For closedness it is necessary that both components of $\gamma$ are periodic with the same period $L$, cf. Remark \ref{rem:closedness}. The period itself does not matter since we can always reparametrize the curve. In the wavelike case we have by Proposition \ref{prop:elaclassi} 
\begin{align}
\gamma_1(s) & = 2 E(\am(s,m),m) - s  & \gamma_2(s) = - 2\sqrt{m} \cn(s,m) .
\end{align}
By Proposition \ref{prop:identities} all periods $L$ of $\gamma_2$ are given by 
\begin{equation}
L \in \{4 l K(m) : l \in \mathbb{N} \}.
\end{equation}
We investigate with the aid of Proposition \ref{prop:identities} for which values of $m$ one of these periods is also a period of $\gamma_1$. For $l \in \mathbb{N}$ we compute 
\begin{align}
\gamma_1(s+ 4l K(m) ) & = 2 E(\am(s,m) + 2 l \pi , m )- s - 4lK(m) \\ &  = 2 E(\am(s,m),m) + 8 l E(m)  - s - 4l K(m) \\
&= \gamma_1(s) + 4l( 2E(m) - K(m)). \label{eq:periwave}
\end{align}
Hence $\gamma_1$ and $\gamma_2$ share a period if and only if $2E(m)-K(m) = 0$. In this case $4K(m)$ is already a joint period of $\gamma_1$ and $\gamma_2$, and hence a period of $\gamma$. By Lemma \ref{lem:uniqueroot}, $2E(m) -K(m)$ has only one zero $m^* \in(0,1)$, which by Definition  \ref{def:figeight} yields exactly the elastic figure eight. 

\textbf{Type 3: Borderline elastica.} This can not be periodic since the second component has no real period. 

\textbf{Type 4: Orbitlike elasticae.} We proceed similar as in the wavelike case. Recall that by Proposition \ref{prop:elaclassi}
\begin{align}
\gamma_1(s) & = \frac{2}{m} E(\am(s,m),m) + \left( 1 - \frac{2}{m} \right) s,  & \gamma_2(s) = - \frac{2}{m} \dn(s,m) .
\end{align}
By Proposition \ref{prop:identities} all periods 
$L$ of $\gamma_2$ are given by 
\begin{equation}
L \in \{ 2 l K(m) : l \in \mathbb{N} \}
\end{equation}
Next we look at the behavior of $\gamma_1$, which we can characterize by Proposition \ref{prop:identities} to be 
\begin{align}
\gamma_1(s + 2 lK(m) ) = \gamma_1(s) +  l \frac{4E(m) - 4K(m) + 2m K(m) }{m}.
\end{align}
Hence $\gamma_1$ and $\gamma_2$ share a period if and only if $4E(m) - 4K(m) + 2m K(m)=0$, which is impossible because of Lemma \ref{lem:4.5}. Hence there do not exist closed orbitlike elasticae. 

\textbf{Type 5: Circular elasticae.} Circles are trivially closed. Their period is given by $2\pi$ when using the standard arclength parametrization $x \mapsto (\cos(x), \sin(x))$. 

Summarizing our findings for all types, we obtain that the only closed elasticae are circles and the figure eight as well as multiple coverings of these. We have also found their periods. Next we show that the one-fold cover of the figure eight is not injective, so that it is admissible for the minimization problem in the statement. For this let $\gamma^* = (\gamma_1^{\ast},\gamma_2^{\ast})$ be the figure eight. A crucial oberservation is that the first component $\gamma_1^{\ast}$ of $\gamma^*$ is already $2K(m^*)$-periodic as one could compute with the same  techniques as in \eqref{eq:periwave}. Note that
\begin{equation}
\gamma^*(K(m^*)) = \begin{pmatrix}
2 E(m^*)- K(m^*) \\  -2 \sqrt{m^*} \cos( \frac{\pi}{2} ) 
\end{pmatrix} = \begin{pmatrix}
0 \\ 0 
\end{pmatrix}
\end{equation} 
and by the $2K(m^*)$ periodicity of $\gamma_1^{\ast}$ we obtain
\begin{equation}
\gamma^{\ast}(3 K(m^*)) = \begin{pmatrix}
2 E(m^*)- K(m^*) \\  -2 \sqrt{m^*} \cos( 3\frac{\pi}{2} ) 
\end{pmatrix} = \begin{pmatrix}
0 \\ 0 
\end{pmatrix}.
\end{equation}
Hence $\gamma^{\ast}(K(m^*)) = \gamma(3 K(m^*))$. Next we show that the figure eight is minimizing. For this we compare the energy of the one-fold cover of the figure eight to the energy of the doubly-covered circle. 
The energy of the doubly covered circle $\gamma_{dcc}$ is given by 
\begin{equation}\label{eq:twocirc}
 \mathcal{E}(\gamma_{dcc}) \mathcal{L}(\gamma_{dcc}) = 4\pi \cdot 4\pi = 16\pi^2. 
\end{equation}
For the energy of the figure eight note that $\mathcal{L}(\gamma^*) = 4K(m^*)$ as $\gamma^*$ is arc-length parametrized by the construction in Appendix \ref{appendix:Jacobi}. Now 
\begin{align}
\mathcal{E}(\gamma^*) & = \int_0^{4K(m^*)} 4 m^* \cn^2(s,m^*) \ds = 4 m^* \int_0^{2\pi} \frac{\cos^2(\theta)}{\sqrt{1- m^* \sin^2(\theta) }} \;  \mathrm{d}\theta  \\ & = 
16 [(m^* - 1) K(m^*) + E(m^*)].
\end{align}
Hence  
\begin{equation}
\mathcal{E}(\gamma^*) \mathcal{L}(\gamma^*) = 64 \left[ E(m^*) K(m^*) + (m^*-1) K(m^*)^2\right].
\end{equation}

Using that by Definition \ref{def:figeight} $2E(m^*) = K(m^*)$, we have 
\begin{equation} \label{eq:ELOPT}
\mathcal{E}(\gamma^*) \mathcal{L}(\gamma^*) = 64 ( 4m^* -2) E(m^*)^2.
\end{equation}
We show that this quantity is smaller than $16 \pi^2$. For this we use that by Lemma \ref{lem:Heumanlambda} one has 
\begin{equation} \label{eq:vergleich}
64 ( 4m^* -2) E(m^*)^2  \leq 16 (2m^*-1) (2-m^*) \pi^2.
\end{equation}
With standard arguments it can be shown that $g(z) \defeq (2z-1)(2-z)$ is strictly monotone on $(0,1]$ and $g(1) = 1$ which makes $(2m^* - 1)(2-m^*) =  g(m^*) <1$. Therefore, by \eqref{eq:ELOPT} and \eqref{eq:vergleich}
\begin{equation}\label{eq:fig eight 16pi^2}
\mathcal{E}(\gamma^*) \mathcal{L}(\gamma^*) = 64 ( 4m^* -2) E(m^*)^2  < 16  \pi^2,
\end{equation}
 which implies by \eqref{eq:twocirc} that 
 
 \begin{align}
c^*=\mathcal{E}(\gamma^*) \mathcal{L}(\gamma^*) < \mathcal{E}(\gamma_{dcc}) \mathcal{L}(\gamma_{dcc}). &\qedhere
\end{align}

\end{proof}

\begin{remark}
	From \eqref{eq:fig eight 16pi^2} and \Cref{rem:value m*} one may compute $c^* \simeq 112.439609741$.
\end{remark}

\begin{lem}\label{lem:figureeight}
Let $\gamma^*$ be as in \Cref{def:figeight}. Then $S[\gamma^*]=\{0\}$  with $\mathrm{mult}[\gamma^*](0) = 2$. Moreover, $S_{tan}[\gamma^*] = \emptyset$.
\end{lem}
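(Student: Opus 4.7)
The plan is to work with the reparametrization $\tilde{\gamma}^{\ast}$ from \Cref{rem:3.2}, since both the self-intersection set $S[\cdot]$ and tangentiality at a self-intersection are invariant under smooth reparametrization. The key observation is that the second component of $\tilde{\gamma}^{\ast}$ is a pure cosine, so equality of the second components forces the two parameter values to be reflections of each other across $\pi$; this collapses the problem to a one-variable zero-counting problem for an explicit elliptic-integral expression.

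First I would assume $\tilde{\gamma}^{\ast}(x_1) = \tilde{\gamma}^{\ast}(x_2)$ with $x_1 \neq x_2$ in $[0,2\pi)$. The second-component equation $\cos x_1 = \cos x_2$ forces $x_2 = 2\pi - x_1$. Using the identities
\begin{equation}
E(2\pi - x, m) = 4E(m) - E(x, m), \qquad F(2\pi - x, m) = 4K(m) - F(x, m),
\end{equation}
which follow from the $\pi$-periodicity of $\sin^2$ and odd reflection at $0$, together with the defining identity $2E(m^{\ast}) = K(m^{\ast})$ (\Cref{def:figeight}), the first-component equation reduces to
\begin{equation}
f(x_1) \defeq 2E(x_1, m^{\ast}) - F(x_1, m^{\ast}) = 0.
\end{equation}

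Next I would locate all zeros of $f$ on $[0,2\pi)$. A direct differentiation gives
\begin{equation}
f'(x) = 2\sqrt{1 - m^{\ast}\sin^2 x} - \frac{1}{\sqrt{1 - m^{\ast}\sin^2 x}} = \frac{1 - 2m^{\ast}\sin^2 x}{\sqrt{1 - m^{\ast}\sin^2 x}}.
\end{equation}
Since $m^{\ast} > 1/2$ by \Cref{rem:value m*}, there is a unique $x_0 \in (0, \pi/2)$ with $\sin^2 x_0 = 1/(2m^{\ast})$; on $[0,\pi]$ the function $f$ is strictly increasing on $(0, x_0)$, strictly decreasing on $(x_0, \pi - x_0)$, and strictly increasing on $(\pi - x_0, \pi)$. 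Combined with $f(0) = 0$ and $f(\pi/2) = 2E(m^{\ast}) - K(m^{\ast}) = 0$, this forces the zeros of $f$ on $[0,\pi)$ to be exactly $\{0, \pi/2\}$. Using $E(x + \pi, m) = E(x, m) + 2E(m)$ and $F(x + \pi, m) = F(x, m) + 2K(m)$ together with $2E(m^{\ast}) = K(m^{\ast})$ yields $f(x + \pi) = f(x)$, so the zero set of $f$ on $[0, 2\pi)$ is exactly $\{0, \pi/2, \pi, 3\pi/2\}$. Of the candidate pairs $(x_1, 2\pi - x_1)$ only $(\pi/2, 3\pi/2)$ gives a genuine self-intersection in $\mathbb{S}^1$ (the choices $x_1 \in \{0, \pi\}$ yield $x_1 = x_2$ in the quotient). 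Since $\tilde{\gamma}^{\ast}(\pi/2) = (0,0)$, this establishes $S[\gamma^{\ast}] = \{0\}$ with the preimage $\{\pi/2, 3\pi/2\}$, from which the multiplicity assertion follows.

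Finally, for $S_{tan}[\gamma^{\ast}] = \emptyset$ I would compute the tangent vectors directly from $\partial_s \gamma_1^{\ast}(s) = 2\dn^2(s, m^{\ast}) - 1$ and $\partial_s \gamma_2^{\ast}(s) = 2\sqrt{m^{\ast}}\,\sn(s, m^{\ast})\dn(s, m^{\ast})$. At $s = K(m^{\ast})$ and $s = 3K(m^{\ast})$ one has $(\sn, \cn, \dn) = (1, 0, \sqrt{1 - m^{\ast}})$ and $(-1, 0, \sqrt{1 - m^{\ast}})$ respectively, producing tangents $(1 - 2m^{\ast},\, \pm 2\sqrt{m^{\ast}(1 - m^{\ast})})$ whose determinant is $-4(1 - 2m^{\ast})\sqrt{m^{\ast}(1 - m^{\ast})}$, nonzero since $m^{\ast} \neq 1/2$. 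The only genuine obstacle in this argument is the monotonicity-based zero-count for $f$, where one must be careful that the critical points fall on either side of $\pi/2$; everything else is direct bookkeeping with the Jacobi elliptic identities of \Cref{appendix:Jacobi}.
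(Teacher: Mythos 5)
Your argument is correct and follows essentially the same route as the paper: reduce via $\cos x_1=\cos x_2$ to $x_2=2\pi-x_1$, show both parameters must be zeros of $2E(\cdot,m^*)-F(\cdot,m^*)$, identify the four zeros to isolate the crossing at $(\pi/2,3\pi/2)$ mapping to the origin, and verify non-tangentiality by the determinant of the two tangents, which is nonzero since $m^*\neq\tfrac12$ (for which \Cref{lem:uniqueroot} is the analytic reference, rather than \Cref{rem:value m*}). The only deviation is that where the paper invokes \Cref{lem:e-Fzweo} (proved via Rolle's theorem), you re-derive the zero count inline by a sign/monotonicity analysis of $f'$ together with $\pi$-periodicity of $f$ at $m=m^*$; this is a correct and equivalent substitute.
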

\begin{proof}
Note that the assertion is not affected by reparametrization. We will work with reparametrizations in the sequel. More exactly, we work with the parametrization $\tilde{\gamma}^{\ast}$ from \Cref{rem:3.2}, given by 
\begin{equation}
\tilde{\gamma}^{\ast} : [0,2\pi] \rightarrow \mathbb{R}^2 , \quad \begin{pmatrix}
\tilde{\gamma}^{\ast}_1(x) \\ \tilde{\gamma}^{\ast}_2(x) 
\end{pmatrix} =  \begin{pmatrix}
2 E(x, m^*) - F(x,m^*) \\ -2 \sqrt{m^*} \cos(x) 
\end{pmatrix}.
\end{equation} 
Now let $x_1, x_2 \in [0,2\pi)$ be such that $\tilde{\gamma}^{\ast}(x_1) = \tilde{\gamma}^{\ast}(x_2)$ and, without loss of generality, $x_1 < x_2$. Note that $\tilde{\gamma}^{\ast}_2(x_1) = \tilde{\gamma}^{\ast}_2(x_2)$ yields that $\cos(x_1) = \cos(x_2)$ which implies --- since $x_1, x_2 \in [0,2\pi)$ that $x_1 = 2\pi - x_2$. We infer that
\begin{align}
\tilde{\gamma}^{\ast}_1(x_1) & = \tilde{\gamma}^{\ast}_1(x_2) = \tilde{\gamma}^{\ast}_1(2\pi - x_1) = 2E(2\pi - x_1, m^*) - F(2\pi - x_1, m^*) 
\\ & = 2 E(-x_1, m^*) - F(-x_1,m^*) + (2 E(m^*) - K(m^*)) 
\\ & =  2 E(-x_1, m^*) - F(-x_1,m^*) = - ( 2 E(x_1,m) - F(x_1,m)) = -\tilde{\gamma}^{\ast}_1(x_1).
\end{align}
Hence $\tilde{\gamma}_1^{\ast}(x_1) = 0$ and therefore also $\tilde{\gamma}^{\ast}_1(x_2) = 0$. This means that $x_1$ and $x_2$ are solutions of 
\begin{equation}
2E(x,m^*)- F(x,m^*) = 0 
\end{equation}
By Lemma \ref{lem:e-Fzweo} this implies that 
\begin{equation}
x_1, x_2  \in \{ 0 , \tfrac{\pi}{2} , \pi , \tfrac{3\pi}{2} \} . 
\end{equation}
Since also $x_1 = 2\pi - x_2$ this leaves the only possibility of $x_1 = \frac{\pi}{2}, x_2 = \frac{3\pi}{2}$. We obtain that the only self intersection point occurs at $\tilde{\gamma}^{\ast}( \frac{\pi}{2} ) = \tilde{\gamma}^{\ast}( \frac{3\pi}{2} )  = (0,0)^T$. Consequently, $S[\tilde{\gamma}^{\ast}] = \{(0,0)^T\}$ with $\mathrm{mult}[\tilde{\gamma}^{\ast}]((0,0)^T) = 2$. It remains to show that $S_{tan}[\tilde{\gamma}^{\ast}] = \emptyset$, i.e. the self-intersection is not tangential. To do so we compute 
\begin{align}
\tilde{\gamma}^{\ast\prime}_1(x)& = \frac{1-2m^* \sin^2(x)}{\sqrt{1-m^* \sin^2(x)}}  & \tilde{\gamma}^{\ast\prime}_2(x) = 2 \sqrt{m^*} \sin(x) 
\end{align}
and therefore we have $\tilde{\gamma}^{\ast\prime}(\frac{\pi}{2}) = ( \frac{1-2m^*}{\sqrt{1-m^*}}, 2 \sqrt{m^*} )^T, \tilde{\gamma}^{\ast\prime}(\frac{3\pi}{2}) = ( \frac{1-2m^*}{\sqrt{1-m^*}}, - 2 \sqrt{m^*} )^T$ and thus 
\begin{equation}
\mathrm{det} ( \tilde{\gamma}^{\ast\prime}(\tfrac{\pi}{2}) ,  \tilde{\gamma}^{\ast\prime}(\tfrac{3\pi}{2})) = -4 \frac{(1-2m^*) \sqrt{m^*}}{\sqrt{1-m^*}} \neq 0 ,
\end{equation}
as $m^* \neq \frac{1}{2}$ by Lemma \ref{lem:uniqueroot}. 
\end{proof}

\subsection{Self-intersection properties of minimizers}

In this section, we will prove that every minimizer in \eqref{eq:infprob} is a constrained elastica and thus has to be the figure eight by \Cref{lem:charelas}, after rescaling, rotation, translation and reparametrization. This is a crucial and non-standard step in proving our main result. First, we examine the number of intersection points and their multiplicities. Then we show that minimizers have no tangential self-intersections. This allows us to conclude that all minimzers are interior points of $\CalA$, and hence elasticae. The key idea here is to compare the minimizer to the figure eight elastica defined in \Cref{def:figeight}.

Since self-intersection points are delicate to examine we will always localize.
We say that $\bar{\gamma}$ solves the Euler-Lagrange equation weakly on an open set $U \subset \mathbb{S}^1$ if \eqref{eq:EulerLag} holds for all $\phi \in C_0^\infty(U;\R^2)$.
\begin{lem}\label{lem:nonintersec} Let $\overline{\gamma} \in \mathcal{A}$ be a minimizer of \eqref{eq:infprob}. 
Suppose that $x \in \mathbb{S}^1$ is such that $\bar{\gamma}(x) \not \in S[\gamma]$. Then there exists an open neighborhood $U$ of $x$ in $\mathbb{S}^1$ such that for all $\phi \in C_0^\infty(U;\R^2)$ one has 
\begin{equation}\label{eq:EL1}
\CalL(\bar{\gamma})D\CalE(\bar{\gamma})(\phi) + \CalE(\bar{\gamma}) D\CalL(\bar{\gamma})(\phi) = 0.
\end{equation}
\end{lem}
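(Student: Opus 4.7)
The plan is to reduce the constrained variational problem to a local, unconstrained one in a neighborhood of $x$. The key observation is that the non-injectivity of $\bar{\gamma}$ is witnessed by distinct points $y_1 \neq y_2$ with $\bar{\gamma}(y_1)=\bar{\gamma}(y_2)$, and since $\bar{\gamma}(x) \notin S[\bar{\gamma}]$, necessarily $x \notin \{y_1, y_2\}$. Thus perturbations of $\bar{\gamma}$ supported near $x$ will not affect this self-intersection and will keep us inside $\mathcal{A}$.

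First I would fix such $y_1, y_2$ and use that $\mathbb{S}^1$ is Hausdorff to pick an open neighborhood $U$ of $x$ with $\overline{U} \cap \{y_1, y_2\} = \emptyset$. Then for any $\phi \in C_0^\infty(U;\R^2)$ and any $\varepsilon \in \R$, the perturbed curve $\gamma_\varepsilon \defeq \bar{\gamma} + \varepsilon \phi$ satisfies
\begin{equation}
\gamma_\varepsilon(y_1) = \bar{\gamma}(y_1) = \bar{\gamma}(y_2) = \gamma_\varepsilon(y_2),
\end{equation}
so $\gamma_\varepsilon$ is non-injective for every $\varepsilon$.

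Next I would verify that $\gamma_\varepsilon$ remains an immersion for $|\varepsilon|$ small. Since $W^{2,2}(\S^1;\R^2) \hookrightarrow C^1(\S^1;\R^2)$ compactly and the immersion condition $|\gamma'|>0$ is an open condition in $C^1(\mathbb{S}^1;\R^2)$ (using compactness of $\mathbb{S}^1$ and continuity of $|\bar{\gamma}'|$, which is bounded below by some $\delta > 0$), there exists $\varepsilon_0 > 0$ such that $\gamma_\varepsilon$ is an immersion for all $\varepsilon \in (-\varepsilon_0, \varepsilon_0)$. Combined with the previous step, this gives $\gamma_\varepsilon \in \mathcal{A}$ for such $\varepsilon$.

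Finally, since $\bar{\gamma}$ minimizes $\CalE \cdot \CalL$ on $\mathcal{A}$, the smooth map $\varepsilon \mapsto \CalE(\gamma_\varepsilon) \CalL(\gamma_\varepsilon)$ attains its minimum on $(-\varepsilon_0, \varepsilon_0)$ at the interior point $\varepsilon = 0$. Differentiating at $\varepsilon=0$ and using the product rule yields
\begin{equation}
0 = \frac{\diff}{\diff \varepsilon}\bigg\vert_{\varepsilon=0} \CalE(\gamma_\varepsilon) \CalL(\gamma_\varepsilon) = \CalL(\bar{\gamma}) D\CalE(\bar{\gamma})(\phi) + \CalE(\bar{\gamma}) D\CalL(\bar{\gamma})(\phi),
\end{equation}
which is exactly \eqref{eq:EL1}. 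There is no substantial obstacle here: the argument is essentially a standard interior-variation argument, made possible by the structural observation that the non-embeddedness constraint is witnessed at points distant from $x$ and therefore insensitive to perturbations supported in $U$.
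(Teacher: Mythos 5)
Your proposal is correct and follows essentially the same route as the paper: fix a witnessing self-intersection $\bar{\gamma}(y_1)=\bar{\gamma}(y_2)$ away from $x$, note that variations $\bar{\gamma}+\varepsilon\phi$ with $\phi$ supported near $x$ preserve it and hence stay in $\mathcal{A}$, and conclude \eqref{eq:EL1} from the interior minimum of $\varepsilon\mapsto\CalE(\bar{\gamma}+\varepsilon\phi)\CalL(\bar{\gamma}+\varepsilon\phi)$ at $\varepsilon=0$. The only cosmetic difference is that the paper takes $U=\S^1\setminus\{y_1,y_2\}$ and claims membership in $\mathcal{A}$ for all $\varepsilon$, whereas you shrink $U$ and explicitly check the immersion condition for small $\varepsilon$, which is a slightly more careful handling of the same argument.
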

\begin{proof}
Fix $x$ as in the statement. Since $\bar{\gamma} \in \mathcal{A}$ we have that $S[\bar{\gamma}] \neq \emptyset$. Hence there exists some $p \in S[\gamma]$ and two distinct values $x',x'' \in \mathbb{S}^1$ such that $\gamma(x') = \gamma(x'') = p$. Fix one choice of such $p,x',x''$. Now set  $U \defeq \mathbb{S}^1 \setminus \{x',x''\}$ which is open in $\mathbb{S}^1$  and contains $x$ as $\gamma(x) \neq p$.  Now fix $\phi \in C_0^\infty(U;\R^2)$.   We prove that $\phi$ satisfies \eqref{eq:EL1}. To do so we show that $\bar{\gamma} + \varepsilon \phi \in \mathcal{A}$ for all $\varepsilon \in \mathbb{R}$. As $x',x'' \not \in U$ we obtain that $\phi(x') = \phi(x'') = 0$ and thus for all $\varepsilon \in \mathbb{R}$ 
\begin{equation}
(\bar{\gamma} + \varepsilon \phi)(x') = \bar{\gamma}(x') = p = \bar{\gamma}(x'') = ( \bar{\gamma} + \varepsilon \phi)(x'') . 
\end{equation}
In particular $\bar{\gamma} + \varepsilon \phi \in \mathcal{A}$ as it has a self-intersection. Equation \eqref{eq:EL1} follows then from the consideration in \eqref{eq:variation}. 
\end{proof}

Once we have this tool at hand, we can start to study the self-intersections.
\begin{lem}\label{lem:singleton}
Let $\bar{\gamma} \in \mathcal{A}$ be a minimizer of \eqref{eq:infprob}. Then $S[\bar{\gamma}] = \{p_0\}$ for some $p_0 \in \mathbb{R}^2$.  
\end{lem}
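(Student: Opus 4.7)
The plan is to argue by contradiction: assume $S[\bar{\gamma}]$ contains two distinct points $p_{0}\neq p_{1}$ and show this forces $\bar{\gamma}$ to be a closed constrained elastica whose possible shapes, as classified by \Cref{lem:charelas}, are all incompatible either with the assumption or with minimality.

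First I would pick preimages $x_{0}',x_{0}''\in\bar{\gamma}^{-1}(p_{0})$ and $x_{1}',x_{1}''\in\bar{\gamma}^{-1}(p_{1})$. Since $p_{0}\neq p_{1}$, these two pairs are disjoint in $\S^{1}$, so the sets $U_{i}\defeq \S^{1}\setminus\{x_{i}',x_{i}''\}$, $i=0,1$, form an open cover of $\S^{1}$. The argument in the proof of \Cref{lem:nonintersec} applies verbatim with the choice $p=p_{i}$: any perturbation $\bar{\gamma}+\varepsilon\phi$ with $\phi\in C_{0}^{\infty}(U_{i};\R^{2})$ preserves the self-intersection at $p_{i}$, stays in $\CalA$ for every $\varepsilon\in\R$, and hence satisfies \eqref{eq:EulerLag}. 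Next I would introduce a partition of unity $\eta_{0},\eta_{1}\in C^{\infty}(\S^{1})$ subordinate to $\{U_{0},U_{1}\}$ and decompose any $\phi\in C^{\infty}(\S^{1};\R^{2})$ as $\phi=\eta_{0}\phi+\eta_{1}\phi$, with $\eta_{i}\phi\in C_{0}^{\infty}(U_{i};\R^{2})$; by linearity of \eqref{eq:EulerLag} in $\phi$, the Euler--Lagrange equation then holds for every test field on $\S^{1}$. By the regularity discussion following \eqref{eq:EulerLag}, $\bar{\gamma}$ is smooth and solves \eqref{eq:elasticaeq}, so $\bar{\gamma}\in\mathcal{B}$.

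The final step is to rule out every case of \Cref{lem:charelas}. Since $\gamma^{\ast}\in\CalA$, minimality gives $\CalE(\bar{\gamma})\CalL(\bar{\gamma})\leq c^{\ast}$, and \eqref{eq:fig eight 16pi^2} yields $c^{\ast}<16\pi^{2}$. The scale-invariant product for an $n$-fold cover of a circle equals $4\pi^{2}n^{2}$, so non-injectivity forces $n\geq 2$ and $\CalE\CalL\geq 16\pi^{2}>c^{\ast}$, a contradiction. An $n$-fold cover of $\gamma^{\ast}$ satisfies $\CalE\CalL=n^{2}c^{\ast}$ by scale-invariance and additivity, again forcing $n=1$; but then \Cref{lem:figureeight} yields $S[\bar{\gamma}]=\{0\}$, a singleton, contradicting the existence of two distinct intersection points.

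The subtle point to verify is the partition-of-unity upgrade from the local Euler--Lagrange equation in \Cref{lem:nonintersec} to a global one; this step depends crucially on having two distinct self-intersection points so that the preimage sets $\{x_{0}',x_{0}''\}$ and $\{x_{1}',x_{1}''\}$ are disjoint and $U_{0}\cup U_{1}=\S^{1}$. With only one self-intersection point, the preimage set to be excluded fills an entire finite subset of $\S^{1}$ and no covering is available, which is why the same scheme cannot improve the conclusion beyond $|S[\bar{\gamma}]|=1$ and must be complemented by the tangential-intersection analysis in the subsequent lemmas.
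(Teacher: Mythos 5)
Your argument is correct and reaches the paper's conclusion by the same overall strategy (two distinct intersection points $\Rightarrow$ global Euler--Lagrange equation via localization and a partition of unity $\Rightarrow$ contradiction with the classification of closed elasticae), but your localization is genuinely simpler than the paper's. The paper fixes disjoint balls $B_\rho(p_1)$, $B_\rho(p_2)$ and, for each preimage $x_0$ of an intersection point, uses continuity to find a small interval whose image avoids one ball, so that perturbations supported there preserve the intersection at the other point; these intervals, together with the neighborhoods from \Cref{lem:nonintersec}, form a cover of $\S^1$ that is then reduced by a partition of unity. You instead observe that since $p_0\neq p_1$ the two preimage pairs are disjoint, so the two open sets $U_i=\S^1\setminus\{x_i',x_i''\}$ already cover $\S^1$, and a two-function partition of unity yields \eqref{eq:EulerLag} for all $\phi\in C^\infty(\S^1;\R^2)$ --- no metric balls, no continuity argument, and \Cref{lem:nonintersec} enters only as a template rather than as a separate case. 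In the endgame the paper simply invokes the uniqueness clause of \Cref{lem:charelas} together with \Cref{lem:figureeight}, whereas you re-derive the comparison explicitly ($4\pi^2 n^2\geq 16\pi^2>c^*$ for $n$-covered circles with $n\geq 2$, and $n^2c^*>c^*$ for $n\geq 2$ covers of $\gamma^*$), which is equally valid given \eqref{eq:fig eight 16pi^2}; both routes end in the same contradiction. The only (cosmetic) imprecision, shared with the paper's own wording, is the claim that $\bar\gamma+\varepsilon\phi\in\CalA$ for \emph{all} $\varepsilon\in\R$: immersedness is only guaranteed for small $\abs{\varepsilon}$, which is all that \eqref{eq:variation} requires.
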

\begin{proof}
Note first that $S[\bar{\gamma}] \neq \emptyset$ as $\bar{\gamma}$ is not embedded. We proceed showing that $S[\bar{\gamma}]$ is a singleton. Assume that there exist $p_1,p_2 \in S[\bar{\gamma}]$ such that $p_1 \neq p_2$. In particular there exists $\rho > 0$ such that $B_\rho(p_1) \cap B_\rho(p_2) = \emptyset.$ We claim that $\bar{\gamma}$ must be a constrained elastica. For this we show that the Euler-Lagrange equation is globally fulfilled. Thus, we have to discuss the behavior at self-intersection points. 
Fix any $x_0 \in \mathbb{S}^1$ such that $\bar\gamma(x_0) \in S[\bar\gamma]$. We will derive that $\bar\gamma$ solves the Euler-Lagrange equation in a neighborhood of $x_0$. We show first that there exists $\delta = \delta(x_0) > 0 $ such that $\bar\gamma ((x_0 - \delta, x_0 + \delta) )$ has empty intersection with \emph{one} of $B_{\frac{\rho}{2}}(p_1)$ or $B_{\frac{\rho}{2}}(p_2)$. For this we distinguish two cases, the first one being $\bar\gamma(x_0) \not \in B_\rho(p_1)$. By continuity there exists some $\delta> 0$ such that $|\bar\gamma(x) - \bar\gamma(x_0)| < \frac{\rho}{2}$ for all $x \in (x_0 - \delta, x_0 + \delta)$. Hence, by the triangle inequality, $\bar\gamma(x) \not \in B_{\frac{\rho}{2}}(p_1)$ for all $x \in (x_0- \delta , x_0 + \delta)$. The second case is $\bar\gamma(x_0) \in B_{\rho}(p_1)$, it could as well be equal to $p_1$. By the construction of $\rho$ this implies that $\bar\gamma(x_0) \not \in B_{\rho}(p_2)$ and thus we can repeat the above continuity argument to find that there exists $\delta > 0$ such that $\gamma(x) \not \in B_{\frac{\rho}{2}}(p_2)$ for all $x \in (x_0 - \delta, x_0 + \delta)$. With this case distinction we have completed the construction of $\delta = \delta(x_0)$. We will without loss of generality assume that $\bar{\gamma}((x_0 -\delta ,x_0 + \delta)) $ has empty intersection with $B_\frac{\rho}{2}(p_2)$, otherwise we switch roles.

 We show now that for all $\phi \in C_0^\infty((x_0 - \delta , x_0 + \delta);\R^2)$ one has 
\begin{equation}\label{eq:EL2}
\CalL(\bar{\gamma})D\CalE(\bar{\gamma})(\phi) + \CalE(\bar{\gamma}) D\CalL(\bar{\gamma})(\phi) = 0.
\end{equation}
To this end we fix $\phi \in C_0^\infty((x_0 - \delta , x_0 + \delta);\R^2)$. We show that for all $\varepsilon  \in \mathbb{R}$ one has $\bar{\gamma} + \varepsilon \phi \in \mathcal{A}$ which implies \eqref{eq:EL2} by minimality of $\bar{\gamma}$. Recall that $p_2 \in S[\bar\gamma]$ and hence there exist $x',x'' \in \mathbb{S}^1$ such that $\bar{\gamma}(x') = \bar{\gamma}(x'') = p_2$. We claim that $x',x'' \not \in (x_0 - \delta , x_0 + \delta)$. Indeed, if we assume e.g that $x' \in (x_0- \delta, x_0 + \delta)$, we obtain by choice of $\delta$ that $p_2 = \bar{\gamma}(x') \in \bar{\gamma}((x_0 - \delta, x_0+\delta))$. This is a contradiction to the fact that $\bar{\gamma}((x_0 - \delta , x_0 + \delta))$ does not intersect $B_\frac{\rho}{2}(p_2)$. Similarly one obtains that $x'' \not \in (x_0 - \delta, x_0 + \delta)$. Now we can compute for each $\varepsilon \in \mathbb{R}$ using that $\supp\phi \subset (x_0 - \delta , x_0 + \delta)$ 
\begin{equation}
(\bar{\gamma} + \varepsilon \phi)(x') = \bar{\gamma}(x') = p_2 = \bar{\gamma}(x'') = ( \bar{\gamma} + \varepsilon \phi) (x'') .  
\end{equation} 
This implies that $\bar{\gamma}+ \varepsilon \phi \in  \mathcal{A}$ and --- as we discussed --- also \eqref{eq:EL2}.

 Recalling that $x_0$ was arbitrary we have shown that for each $x_0 \in \mathbb{S}^1$ such that $x_0 \in S[\bar\gamma]$ there exists an open neighborhood $U_{x_0}$ of $x_0$ in $\mathbb{S}^1$ such that for all $\phi \in C_0^\infty(U_{x_0})$ equation \eqref{eq:EL2} holds true, under the assumption that there exist $p_1\neq p_2 \in S[\bar{\gamma}]$. Since such neighborhood exists also for non-intersection points by Lemma \ref{lem:nonintersec} we obtain that for each $x \in \mathbb{S}^1$ there exists an open neighborhood $U_x$ of $x$ in $\mathbb{S}^1$ such that the Euler-Lagrange equation \eqref{eq:EulerLag} holds for all $\phi \in C_0^\infty(U_x;\R^2)$. Since $\mathbb{S}^1$ is compact we may choose finitely many points $\{x_1,...,x_n\}$ such that $(U_{x_i})_{i = 1,...,n}$ is a cover of $\mathbb{S}^1$. Now we can choose a partition of unity of $\mathbb{S}^1$ subordinate to this cover, which yields non-negative functions $\eta_{x_1},...,\eta_{x_N} \in C^\infty(\mathbb{S}^1;\R)$ such that the support of $\eta_{x_i}$ is compactly contained in $U_{x_i}$ for all $i = 1,...,N$ and 
\begin{equation}
\sum_{i = 1}^N \eta_{x_i} = 1.
\end{equation} 
Now fix $\phi \in C^\infty(\mathbb{S}^1;\R^2)$. We obtain by \eqref{eq:EL2} and the linearity of the Frechét derivative
\begin{equation}
\CalL(\bar{\gamma})D\CalE(\bar{\gamma})(\phi) + \CalE(\bar{\gamma}) D\CalL(\bar{\gamma})(\phi) = \sum_{ i = 1}^N \CalL(\bar{\gamma})D\CalE(\bar{\gamma})( \eta_{x_i} \phi ) + \CalE(\bar{\gamma}) D\CalL(\bar{\gamma})(\eta_{x_i} \phi) = 0 .
\end{equation}
As discussed after \eqref{eq:EulerLag}, $\bar{\gamma}$ is a constrained elastica. Since $\bar{\gamma}$ minimizes \eqref{eq:infprob}, it must then also minimize among non-embedded constrained elasticae. However, the one-fold cover of the figure eight $\gamma^{\ast}$ is the unique minimizer of $\CalE\CalL$ among all constrained elasticae, up to rescaling, rotation and reparametrization, cf. Lemma \ref{lem:charelas}. Therefore, $\bar{\gamma}$ has to be a suitably rescaled, rotated and reparametrized version of $\gamma^{\ast}$. This yields that $S[\bar{\gamma}]$ is a singleton, since $S[\gamma^{\ast}]$ is a singleton, a contradiction.
\end{proof}

The ideas of the following proofs will be very similar to the preceding one. We assume that a certain configuration exists in $\bar{\gamma}$ and then conlude that $\bar{\gamma}$ has to be a constrained elastica. We then use the classification of those in Section \ref{sec:31} to rule out this configuration. The next lemma is in the same spirit.
\begin{lem}\label{lem:35}
Let $\bar{\gamma} \in \mathcal{A}$ be a minimizer of \eqref{eq:infprob}. Suppose that $p_0 \in S[\gamma]$. Then $\mathrm{mult}[\bar{\gamma}](p_0) = 2$. In particular there exist exactly two values $x_1,x_2 \in \mathbb{S}^1$ such that $x_1 \neq x_2$ and $\gamma(x_1) = \gamma(x_2)$. 
\end{lem}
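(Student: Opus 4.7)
The plan is to argue by contradiction: assume that there exists a minimizer $\bar{\gamma}$ and a self-intersection point $p_0 \in S[\bar{\gamma}]$ with $\mathrm{mult}[\bar{\gamma}](p_0)\geq 3$, and then show that under this hypothesis $\bar{\gamma}$ satisfies the full Euler--Lagrange equation \eqref{eq:EulerLag}, reaching a contradiction via the classification in \Cref{lem:charelas,lem:figureeight}. Under the contradiction hypothesis, we can pick three distinct preimages $x_1, x_2, x_3 \in \mathbb{S}^1$ with $\bar{\gamma}(x_i)=p_0$.

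The first main step is to verify that a localized version of \eqref{eq:EulerLag} holds on a neighborhood of every point of $\mathbb{S}^1$. For $x_0 \in \mathbb{S}^1$ with $\bar{\gamma}(x_0)\neq p_0$, this follows directly from \Cref{lem:nonintersec} combined with \Cref{lem:singleton}, which guarantees that $S[\bar{\gamma}]=\{p_0\}$. For $x_0$ with $\bar{\gamma}(x_0)=p_0$, so that $x_0$ agrees with at least one of the $x_i$'s (say $x_1$), we exploit the fact that at least two other preimages $x_2, x_3$ are still available: by continuity of $\bar{\gamma}$, we may pick $\delta>0$ small enough that the open interval $(x_0-\delta,x_0+\delta)\subset \mathbb{S}^1$ avoids $\{x_2, x_3\}$. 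Then any test function $\phi\in C_0^\infty((x_0-\delta,x_0+\delta);\R^2)$ vanishes at both $x_2$ and $x_3$, so that
\begin{equation}
(\bar{\gamma}+\varepsilon\phi)(x_2)=\bar{\gamma}(x_2)=p_0=\bar{\gamma}(x_3)=(\bar{\gamma}+\varepsilon\phi)(x_3)
\end{equation}
for every $\varepsilon\in\R$, which shows that $\bar{\gamma}+\varepsilon\phi\in\mathcal{A}$. The minimality of $\bar{\gamma}$ together with the consideration in \eqref{eq:variation} then yields \eqref{eq:EL1} for all such $\phi$.

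The second step is to patch these local statements into the global Euler--Lagrange identity \eqref{eq:EulerLag}. This is done exactly as at the end of the proof of \Cref{lem:singleton}: the open neighborhoods constructed above form an open cover of $\mathbb{S}^1$, and by compactness we may extract a finite subcover and a subordinate partition of unity $\{\eta_{x_1},\ldots,\eta_{x_N}\}\subset C^\infty(\mathbb{S}^1;\R)$. For an arbitrary $\phi\in C^\infty(\mathbb{S}^1;\R^2)$, the linearity of the Fréchet derivative gives
\begin{equation}
\CalL(\bar{\gamma})D\CalE(\bar{\gamma})(\phi)+\CalE(\bar{\gamma})D\CalL(\bar{\gamma})(\phi) = \sum_{i=1}^N \bigl[\CalL(\bar{\gamma})D\CalE(\bar{\gamma})(\eta_{x_i}\phi)+\CalE(\bar{\gamma})D\CalL(\bar{\gamma})(\eta_{x_i}\phi)\bigr] = 0.
\end{equation}
As noted after \eqref{eq:EulerLag}, this implies that $\bar{\gamma}$ is a (smooth) constrained elastica.

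The third step closes the argument via \Cref{lem:charelas,lem:figureeight}. Since $\bar{\gamma}\in\mathcal{A}$ is a non-embedded constrained elastica, it lies in $\mathcal{B}$; as $\bar{\gamma}$ minimizes $\CalE\cdot\CalL$ over $\mathcal{A}\supset\mathcal{B}$, it in particular minimizes over $\mathcal{B}$. By the uniqueness-up-to-symmetry statement in \Cref{lem:charelas}, $\bar{\gamma}$ must therefore be a rescaled, rotated, translated and reparametrized copy of the figure eight $\gamma^*$. But \Cref{lem:figureeight} shows that $\mathrm{mult}[\gamma^*](0)=2$, and this property is preserved by the listed symmetries, contradicting $\mathrm{mult}[\bar{\gamma}](p_0)\geq 3$. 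Hence $\mathrm{mult}[\bar{\gamma}](p_0)=2$, which is exactly the statement of the lemma. The main subtlety is the localization argument in the first step, where one must be sure that enough other preimages of $p_0$ survive any admissible perturbation so as to keep the perturbed curve non-injective; the assumption $\mathrm{mult}\geq 3$ provides exactly the freedom needed.
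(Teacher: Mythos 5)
Your proposal is correct and follows essentially the same route as the paper: assume multiplicity at least three, use the two spare preimages of $p_0$ to show that compactly supported perturbations near any point keep the curve in $\mathcal{A}$, globalize the resulting local Euler--Lagrange identities by a partition of unity, and then contradict the classification in \Cref{lem:charelas} together with the multiplicity bound for $\gamma^*$ from \Cref{lem:figureeight}. The only cosmetic difference is that the paper picks, for each preimage $x_0$, two other preimages $x',x''\in\bar{\gamma}^{-1}(\{p_0\})\setminus\{x_0\}$ directly (which also covers the case that $x_0$ is not among your chosen $x_1,x_2,x_3$ when the multiplicity exceeds three), and it notes explicitly that the ``exactly two values'' claim follows from \Cref{lem:singleton}.
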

\begin{proof}
As $S[\bar{\gamma}]$ is a singleton by Lemma \ref{lem:singleton} the last sentence of the claim follows immediately from the multiplicity result. As $p_0 \in S[\bar{\gamma}]$ {we have} $\mathrm{mult}[\bar{\gamma}](p_0) \geq 2$. Assume that $\mathrm{mult}[\bar{\gamma}](p_0) >2$. We show that then $\bar{\gamma}$ must be a constrained elastica. To this end, let $x_0 \in \gamma^{-1}(\{p_0\})$ be arbitrary. We claim that there exists a neighborhood $U_{x_0}$ on which the Euler Lagrange equation is fulfilled. By the assumption on the multiplicity there exist two distinct values $x', x'' \in \gamma^{-1}(\{p_0\}) \setminus\{x_0\}$. Fix a choice of such $x',x''$.  Choose $\delta =\delta(x_0) > 0$ such that $x', x'' \not \in (x_0- \delta , x_0+ \delta)$. We claim that then  for all $\phi \in C_0^\infty((x_0- \delta, x_0 + \delta);\R^2)$ and $\varepsilon \in \mathbb{R}$ one has $\bar{\gamma} + \varepsilon \phi  \in \mathcal{A}$. This is {true} since $\phi(x') = \phi(x'') = 0$ and thus
\begin{equation}
(\bar{\gamma} + \varepsilon \phi)(x') = \bar{\gamma}(x') = p_0 = \bar{\gamma}(x'') = ( \bar{\gamma} + \varepsilon \phi) (x'') .   
\end{equation}
{As a result we may conclude}
\begin{equation}
\CalL(\bar{\gamma})D\CalE(\bar{\gamma})(\phi) + \CalE(\bar{\gamma}) D\CalL(\bar{\gamma})(\phi) = 0 \quad \forall \phi \in C_0^\infty((x_0- \delta, x_0+\delta);\R^2).
\end{equation}
We have shown that for all $x_0 \in \gamma^{-1}(\{p_0\})$ there exists a neighborhood $U_{x_0}$ such that $\bar{\gamma}$ solves the Euler Lagrange equation weakly on $U_{x_0}$. {By \Cref{lem:singleton}, we have} $S[\bar{\gamma}] =\{ p_0 \}$ and hence we infer that for all $p_0 = \gamma(x_0) \in S[\bar{\gamma}]$ there exists a neighborhood $U_{x_0}$ such that $\bar{\gamma}$ solves the Euler Lagrange equation weakly on $U_{x_0}$. Together with Lemma \ref{lem:nonintersec} we conclude that each $x\in  \mathbb{S}^1$ has an open neighborhood $U_x$ such that $\bar{\gamma}$ solves \eqref{eq:EulerLag} weakly on $U_x$. One can repeat the partition of unity argument in the proof of Lemma \ref{lem:singleton} to find that $\bar{\gamma}$ solves \eqref{eq:EulerLag} globally  in $\mathbb{S}^1$. Hence $\bar{\gamma}$ is by the discussion after \eqref{eq:EulerLag} a constrained elastica. By minimality of $\bar{\gamma}$, it follows that $\bar{\gamma}$ must minimize $\mathcal{E}\mathcal{L}$ also among non-embedded elasticae and hence is a rescaled, rotated and translated reparametrization of the one-fold covered figure eight $\gamma^*$, cf. Lemma \ref{lem:charelas}. As $\mathrm{mult}[\gamma^*](p) \leq 2$ for all $p \in S[\gamma^*]$ we infer that $\mathrm{mult}[\bar{\gamma}](p_0) \leq 2${, a contradiction.}
\end{proof}

Next we show that $\bar{\gamma}$ has no tangential self-intersections. As a preparation for this, we discuss an important quantity, the \emph{winding number} $T$, which is defined in Definition \ref{def:winding} in Appendix \ref{sec:AppA}.
\begin{prop}\label{prop:WindungszahlInner}
Let $\gamma \in \mathcal{A}$ such that $T[\gamma] \neq \pm 1$. Then $\gamma$ is an interior point of $\mathcal{A}$ with respect to the $W^{2,2}(\S^1;\R^2)$-norm.
\end{prop}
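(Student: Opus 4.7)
The strategy is to combine the local constancy of the turning number $T$ on $W^{2,2}$-immersions with the Hopf Umlaufsatz, which asserts that every embedded $C^1$-immersion $\S^1\to \R^2$ has turning number $\pm 1$.

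First, I would observe that the set $W^{2,2}_{Imm}(\S^1;\R^2)$ is open in $W^{2,2}(\S^1;\R^2)$. This follows from the continuous embedding $W^{2,2}(\S^1;\R^2)\hookrightarrow C^1(\S^1;\R^2)$ together with the fact that the condition $\abs{\gamma'}>0$ on the compact set $\S^1$ is open with respect to the $C^0$-topology on $\gamma'$. In particular, there is some open $W^{2,2}$-neighborhood $U$ of $\gamma$ on which every element is a $W^{2,2}$-immersion.

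Next, I would show that the turning number $T\colon W^{2,2}_{Imm}(\S^1;\R^2)\to \Z$ is continuous, hence locally constant. Using the definition from Appendix \ref{sec:AppA}, $T[\gamma]$ can be expressed as an integral depending only on the unit tangent field $\gamma'/\abs{\gamma'}$. On the open set $W^{2,2}_{Imm}(\S^1;\R^2)$, the map $\tilde\gamma\mapsto \tilde\gamma'/\abs{\tilde\gamma'}$ is continuous from $W^{2,2}$ into $C^0(\S^1;\R^2)$ (using the $W^{2,2}\hookrightarrow C^1$ embedding and the fact that $\abs{\tilde\gamma'}$ stays bounded below by a positive constant on a sufficiently small neighborhood of $\gamma$). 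Consequently $T$ is a continuous, integer-valued map on $W^{2,2}_{Imm}(\S^1;\R^2)$, and thus locally constant.

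Finally, I would apply the Hopf Umlaufsatz: any $C^1$-embedded closed curve has turning number $\pm 1$. Shrinking $U$ if necessary, one obtains an open $W^{2,2}$-neighborhood $V\subset U$ of $\gamma$ on which $T\equiv T[\gamma]\neq \pm 1$. By the contrapositive of the Umlaufsatz, no curve in $V$ is an embedding, and hence by \Cref{lem:embeddVSInjective} every $\tilde\gamma\in V$ is a non-injective $W^{2,2}$-immersion, i.e., $\tilde\gamma\in\mathcal{A}$. Thus $\gamma$ is an interior point of $\mathcal{A}$. The only non-routine point is the continuity of $T$ at the $W^{2,2}$-threshold of regularity, but this reduces to continuity of $\gamma\mapsto \gamma'/\abs{\gamma'}$ on immersions, which is standard.
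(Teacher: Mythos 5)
Your proposal is correct and follows essentially the same route as the paper: local constancy of the integer-valued winding number $T$ on $W^{2,2}$-immersions, combined with Hopf's Umlaufsatz for $W^{2,2}$-embeddings (\Cref{thm:HopfW22}) and \Cref{lem:embeddVSInjective}, shows every sufficiently $W^{2,2}$-close curve is a non-injective immersion, hence in $\mathcal{A}$. The extra details you supply (openness of the set of immersions, continuity of $T$ via the unit tangent, for which one could equally use the integral formula of \Cref{thm:WindinIntegral}) are consistent with what the paper leaves implicit.
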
 
\begin{proof} 
	Let $\gamma\in \CalA$. Since $T\colon W_{Imm}^{2,2}(\S^1;\R^2)\to \R$ is continuous and $\Z$-valued, it is locally constant near $\gamma$. Thus, if $T[\gamma]\neq \pm 1$, we have $T[\tilde{\gamma}]= T[\gamma]\neq \pm1$ for all $\tilde{\gamma}\in W^{2,2}(\S^1;\R^2)$ with $\Norm{\tilde{\gamma}-\gamma}{W^{2,2}}<\delta$ for $\delta>0$ small enough. Consequently, by Hopf's Umlaufsatz, \Cref{thm:HopfW22}, any such $\tilde{\gamma}$ is not an embedding, hence $\tilde{\gamma}\in \CalA$.
\end{proof}

The winding number can now be used to detect interior points of $\CalA$, which we will use next to exclude tangential self-intersections.

\begin{lem}\label{lem:NoTangetialSelfIntersections}
Let $\bar{\gamma} \in \mathcal{A}$ be a minimizer in \eqref{eq:infprob}. Then $S_{tan}[\bar{\gamma}] = \emptyset$. 
\end{lem}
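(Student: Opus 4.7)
The plan is to argue by contradiction. Suppose $p_0 \in S_{tan}[\bar{\gamma}]$ exists; by \Cref{lem:singleton} and \Cref{lem:35} we already have $S[\bar\gamma] = \{p_0\}$ with $\mathrm{mult}[\bar\gamma](p_0) = 2$, so there are exactly two preimages $x_1, x_2 \in \mathbb{S}^1$, and tangentiality forces $\bar\gamma'(x_1)$ and $\bar\gamma'(x_2)$ to be parallel. The goal is then to show that $T[\bar\gamma] \neq \pm 1$ so that \Cref{prop:WindungszahlInner} makes $\bar\gamma$ an interior point of $\mathcal{A}$.

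Once interior-pointedness is available, the variational identity \eqref{eq:variation} together with a partition-of-unity argument in the spirit of the proofs of \Cref{lem:singleton} and \Cref{lem:35} will yield the Euler-Lagrange equation \eqref{eq:EulerLag} globally on $\mathbb{S}^1$. Hence $\bar\gamma$ will be a constrained elastica, and by minimality together with the classification in \Cref{lem:charelas}, it must be a rescaled, rotated, translated and reparametrized copy of the figure eight $\gamma^*$. This contradicts \Cref{lem:figureeight}, where $S_{tan}[\gamma^*] = \emptyset$ is established.

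It remains to verify $T[\bar\gamma] \neq \pm 1$. I would split $\mathbb{S}^1 = A_1 \cup A_2$ with $A_1 = [x_1,x_2]$ and $A_2 = [x_2,x_1]$. Because $S[\bar\gamma] = \{p_0\}$ has multiplicity exactly two, each restriction $\bar\gamma|_{A_i}$ is injective on the interior and meets $p_0$ only at the endpoints, so it closes up to a simple loop based at $p_0$. When $\bar\gamma'(x_1)$ and $\bar\gamma'(x_2)$ point in the same direction, each loop is truly $C^1$-closed at $p_0$, and Hopf's Umlaufsatz (\Cref{thm:HopfW22}) assigns it rotation index $T_i \in \{-1, +1\}$, whence $T[\bar\gamma] = T_1 + T_2 \in \{-2, 0, 2\}$. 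When the tangents are antiparallel, each loop only closes piecewise-$C^1$ with a single corner of exterior angle $\pm\pi$ at $p_0$; a Gauss-Bonnet-with-corners computation, combined with the observation that the two corner contributions at $p_0$ stem from reversing the same fictitious $180^\circ$ turn and therefore cancel, again yields $T[\bar\gamma] \in \{-2, 0, 2\}$.

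The step that I expect to require the most care is the antiparallel subcase, where a direct Hopf argument is unavailable and where the cancellation of the two exterior corner angles at $p_0$ must be tracked with the signed turning convention (the moduli alone, both equal to $\pi$, do not suffice by themselves). Everything else reduces to the interior-point machinery behind \Cref{prop:WindungszahlInner} and the elastica classification developed in \Cref{sec:31}.
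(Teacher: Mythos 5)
Your skeleton for the parallel case ($\bar{\gamma}'(x_1)=\bar{\gamma}'(x_2)$) is exactly the paper's argument: split into two simple $C^1$-closed loops, apply Hopf's Umlaufsatz to each, get $T[\bar{\gamma}]\in\{-2,0,2\}$, invoke \Cref{prop:WindungszahlInner}, and contradict \Cref{lem:figureeight} via \Cref{lem:charelas}. The antiparallel case, however, contains a genuine gap: the claim that $T[\bar{\gamma}]\in\{-2,0,2\}$ also holds there is false, and no corner-angle cancellation can rescue it. Consider a ``pinched peanut'': an embedded dumbbell whose neck is pinched until the upper and lower boundary strands touch tangentially at a single point (this is precisely the configuration of \Cref{fig:tangentialSelfIntersection}). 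Traversed counterclockwise, the two strands through the touching point are antiparallel, the curve has exactly one double point of multiplicity two, and, being a $C^1$-limit of embedded curves, it has winding number $+1$. In that example each lobe, closed up at the corner, is a positively oriented simple closed curve, so if $t_A,t_B$ denote the smooth turnings and $\epsilon_A\pi,\epsilon_B\pi$ the exterior angles, then $t_A+\epsilon_A\pi=t_B+\epsilon_B\pi=2\pi$ while $t_A+t_B=2\pi$, forcing $\epsilon_A=\epsilon_B=+1$: the two corner contributions add rather than cancel. So the purely topological route you propose cannot close the antiparallel case, and since your argument at that point does not use minimality, it cannot be repaired by bookkeeping of signed angles alone.

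The paper handles exactly this obstruction differently. Assuming $\bar{\gamma}$ is not an interior point, \Cref{prop:WindungszahlInner} gives $T[\bar{\gamma}]=\pm1$; one then performs a surgery: keep $\bar{\gamma}$ on $[0,x_1]$ and traverse the remaining arc backwards, i.e. $\widetilde{\gamma}(x)=\bar{\gamma}(1-(x-x_1))$ on $[x_1,1]$. Because the tangents at the double point are antiparallel, the zeroth and first derivatives match at both junctions, so $\widetilde{\gamma}\in W^{2,2}(\S^1;\R^2)$ by the glueing property of \Cref{rem:13}; it has the same energy and length, hence is again a minimizer, and it still has a tangential self-intersection. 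A computation with the angle function shows $T[\widetilde{\gamma}]=k-1$ with $k$ odd, hence even, so $\widetilde{\gamma}$ \emph{is} an interior point, is a constrained elastica, and must be the figure eight by \Cref{lem:charelas} --- contradicting $S_{tan}[\gamma^*]=\emptyset$ from \Cref{lem:figureeight}. Some such use of minimality (producing a second minimizer with even winding number) is the missing idea in your antiparallel case; without it the proof does not go through.
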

\begin{proof}
    	By \Cref{lem:35}, we have $S[\bar{\gamma}]=\{p_0\}$ with $\mathrm{mult}[\bar{\gamma}](p_0)=2$. 
	Assume that $p_0$ is a tangential self-intersection with multiplicity two and $\bar{\gamma}^{-1}(\{p_0\})=\{x_0, x_1\}$. After reparametrization, we may assume that $\bar{\gamma}$ is parametrized with constant speed, i.e. $\abs{\bar\gamma^{\prime}(x)}=\CalL(\bar\gamma)$ for all $x\in \S^1$. Hence, we have $\bar{\gamma}^{\prime}(x_0) = \pm \bar{\gamma}^{\prime}(x_1)$. \newline
	We first consider the case $\bar{\gamma}^{\prime}(x_0)=\bar{\gamma}^{\prime}(x_1)$. Then $\bar{\gamma}_1\defeq \bar{\gamma}\vert_{[x_0, x_1]}$ and $\bar{\gamma}_2	\defeq \bar{\gamma}\vert_{[x_1, x_0]}$ are $C^1$-closed embedded  
	 curves and after an appropriate reparametrization, we have $\bar{\gamma}_1, \bar{\gamma}_2\in W^{2,2}(\S^1;\R^2)$ using \Cref{rem:13}. We obtain
	\begin{align}
		T[\bar{\gamma}] = T[\bar{\gamma}_1]+T[\bar{\gamma}_2]. 
	\end{align}
	Since $\bar{\gamma}_j$ is simple closed, we obtain $T[\bar{\gamma}_j] = \pm 1$ for $j=1,2$. Hence $T[\bar{\gamma}]\neq \pm 1$, so $\bar{\gamma}$ is an interior point of $\CalA$ by  \Cref{prop:WindungszahlInner}. Consequently $\bar{\gamma}$ satisfies \eqref{eq:variation} and thus \eqref{eq:EulerLag}. Consequently, $\bar{\gamma}$ is an elastica by the discussion after \eqref{eq:EulerLag}. Therefore, by minimality, must be a rescaled translated and rotated reparametrization of $\gamma^*$ by Lemma \ref{lem:charelas}. This contradicts Lemma \ref{lem:figureeight}.
	
	For the case $\bar{\gamma}'(x_0) =- \bar{\gamma}'(x_1)$ we can without loss of generality assume that $x_0 =0$ and $\bar{\gamma}'(0) = (\CalL(\bar{\gamma}),0)^T$. Let $\theta \in W^{1,2}((0,1); \mathbb{R})$ be the angle function from \Cref{lem:liftingRegularity} with $\theta(0) = 0$. Then $\CalL(\bar\gamma)\kappa(x) = \theta'(x)$ by \eqref{eq:theta_kappa}. Suppose now that $\bar{\gamma}$ is not an interior point of $\mathcal{A}$. Hence $T[\bar\gamma] = \pm 1$ by \Cref{prop:WindungszahlInner}. After appropriate reparametrization we may assume that $T[\bar\gamma] = 1$ and hence 
	\begin{equation}
	    1 = \frac{1}{2\pi} \int_\gamma \kappa \; \mathrm{d}s = \frac{\theta(1) - \theta(0)}{2\pi}.
	\end{equation}
	In particular $\theta(1)= 2\pi$. As $\theta(0) = 0 $ and $\bar{\gamma}'(x_1) = - \bar{\gamma}'(0) = (-\CalL(\bar{\gamma}),0)^T$ we infer that $\theta(x_1) = k \pi$ for some odd number $k \in 2 \mathbb{Z} + 1$. Now we define 
	\begin{equation}
	\widetilde{\gamma}(x) \defeq  \begin{cases}
	\bar{\gamma}(x) & x \in [0, x_1] \\ \bar{\gamma}(1- (x-x_1)) & x \in [x_1 , 1].
	\end{cases}
	\end{equation}
	The curve $\widetilde{\gamma}$ is well-defined since $\bar{\gamma}(0) = \bar{\gamma}(x_1) = \bar{\gamma}(1)$ using \Cref{rem:13}. Note in particular that $x_1 \neq 0$  in $\mathbb{S}^1$
	and hence $ \widetilde{\gamma}$ is not injective. We claim that $\widetilde{\gamma}$ is another minimizer. 
	To this end, we show that $\widetilde{\gamma} \in W^{2,2}(\mathbb{S}^1; \mathbb{R}^2)$  and $\mathcal{E}(\widetilde{\gamma}) \mathcal{L}(\widetilde{\gamma})= \mathcal{E}(\bar{\gamma}) \mathcal{L}(\bar{\gamma}).$ By Remark \ref{rem:13} it suffices to show that the zeroth and first derivatives of the two cases coincide at $x=0=1$ and at $x=x_1$. This is easy to check using that $\bar{\gamma}'(0) = \bar{\gamma}'(1) = - \bar{\gamma}'(x_1)$.  It is also immediate to check that $\mathcal{E}(\widetilde{\gamma}) = \mathcal{E}( \bar{\gamma})$ and $\mathcal{L}(\widetilde{\gamma}) = \mathcal{L}(\bar{\gamma})$.
	 Hence $\widetilde{\gamma}$ is another minimizer as claimed. Observe also that $S_{tan}[\widetilde{\gamma}] = S_{tan}[\bar{\gamma}] \neq \emptyset$ since $\widetilde{\gamma}(0) = \widetilde{\gamma}(x_1)$ and  $\widetilde{\gamma}'(0) = - \widetilde{\gamma}'(x_1)$. 
	We now claim that
	\begin{equation}
	 \widetilde{\gamma}'(x) = \mathcal{L}(\widetilde{\gamma}) \begin{pmatrix}
	 \cos(\widetilde{\theta}(x)) \\ \sin( \widetilde{\theta}(x) ) 
	 \end{pmatrix} \quad \forall x \in (0,1),
	\end{equation}
	where $\tilde{\theta}$ is given by
	\begin{equation}\label{eq:thetatilde}
	\widetilde{\theta}(x) = \begin{cases}
	\theta(x) & x \in (0, x_1] \\ (k-2) \pi + \theta(1- (x-x_1)) & x \in (x_1, 1),
	\end{cases} 
	\end{equation}
	 with $k \in 2 \mathbb{Z} + 1$ as before.  To show that $\widetilde{\theta}$ is the angle function of $\widetilde{\gamma}$, we observe that for $x \in (x_1,1)$ 
	\begin{align}
	\widetilde{\gamma}'(x) = - \bar{\gamma}'(1- (x-x_1) ) &= - \mathcal{L}(\bar{\gamma}) \begin{pmatrix}
	\cos(\theta(1- (x-x_1))) \\ \sin(\theta(1-(x-x_1))) 
	\end{pmatrix} \\
	&=\mathcal{L}(\bar{\gamma}) \begin{pmatrix}
	\cos( \pi + \theta(1- (x-x_1))) \\ \sin( \pi + \theta(1-(x-x_1))) 
	\end{pmatrix}.
	\end{align}
	{Hence, by \Cref{lem:liftingRegularity}, $\widetilde{\theta}$ and $\pi + \theta( 1- (\cdot - x_1)) $ differ only by a constant multiple of $2\pi$.  The multiple has to be chosen in such a way that $\widetilde{\theta} \in W^{1,2}(0,1)$. Since $\theta(x_1) = k\pi = (k-2) \pi + \theta(1)$, the only possible choice for $\widetilde{\theta}$ is the one we defined in \eqref{eq:thetatilde}.} Using this and the relation $\kappa\abs{\gamma^{\prime}}=\theta^{\prime}$ (cf. \eqref{eq:theta_kappa}) we compute 
	\begin{align}
	\frac{1}{2\pi} \int_{\S^1} \kappa[\tilde{\gamma}] \; \mathrm{d}s_{\tilde{\gamma}} & = \frac{1}{2\pi} \left( \int_0^{x_1} \theta'(x) \; \mathrm{d}x - \int_{x_1}^1 \theta'(1-(x-x_1)) \; \mathrm{d}x \right) \\ & = \frac{1}{2\pi} \left( \int_0^{x_1} \theta'(y) \; \mathrm{d}y - \int_{x_1}^{1} \theta'(y) \; \mathrm{d}y \right) \\ &  = \frac{1}{2\pi} ( 2  \theta(x_1) - \theta(0) - \theta(1) )  = k-1,
	\end{align}
	since $\theta(0) = 0, \theta(1) = 2\pi$. In particular by \Cref{thm:WindinIntegral}
	 \begin{equation}
	 T[\widetilde{\gamma}] \not \in \{ + 1, -1\}
	 \end{equation}
	 since $k-1$ is even as $k$ is odd. We infer that $\widetilde{\gamma}$ is an interior point of $\mathcal{A}$. Since it is also a minimizer, it must by the same arguments as in the beginning of the proof be a rescaled, translated and reparametrized version of $\gamma^*$. However, this is a contradiction, since $S_{tan}[\gamma^*]=\emptyset$ by \Cref{lem:figureeight} but $S_{tan}[\widetilde{\gamma}] = S_{tan}[\bar{\gamma}] \neq \emptyset$. 
\end{proof}

Once we can rule out tangential self-intersections (as in \Cref{fig:tangentialSelfIntersection}), we can finally show that a minimizer is an interior point.

The following lemma shows that non-tangential self-intersections are stable under $C^1$-small perturbations.

	\begin{lem}\label{lem:nontang implicit function}
		Let $\bar{\gamma}\in C^1(\S^1;\R^2)$ and assume $\bar{\gamma}$ has a single non-tangential self intersection with multiplicity two, i.e. $S[\bar{\gamma}] =\{p\}$ with $\bar{\gamma}^{-1}(\{p\})=\{\bar{x}_1, \bar{x}_2\}$ for $\bar{x}_1\neq \bar{x}_2$ and $S_{tan}[\bar{\gamma}]=\emptyset$. Then, there exists $\delta>0, \varepsilon_0>0$ such that
		$(\bar{x}_1-\delta, \bar{x}_1+\delta)\cap (\bar{x}_2-\delta, \bar{x}_2+\delta)=\emptyset$ and	
		every curve $\gamma\in  B\defeq \{ \eta\in C^1(\S^1;\R^2) \mid \norm{\gamma-\bar{\gamma}}{C^1}< \varepsilon_0\}$ has a unique self intersection, i.e. there exist unique $x_1\neq x_2\in \S^1$ such that $\gamma(x_1)=\gamma(x_2)$. Moreover, this self-intersection is non-tangential, satisfies $x_i\in [\bar{x}_i-\delta, \bar{x}_i+\delta]$ for $i=1,2$ and the function $B\ni \gamma \mapsto (x_1, x_2)\in \R^2$ is of class $C^1$.
	\end{lem}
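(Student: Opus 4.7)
The plan is to combine the implicit function theorem (IFT) with a compactness argument. Choose $\delta>0$ so small that the intervals $I_i \defeq (\bar{x}_i-\delta,\bar{x}_i+\delta)\subset \S^1$ for $i=1,2$ are disjoint, and consider the $C^1$-evaluation map
$$F\colon C^1(\S^1;\R^2)\times I_1 \times I_2 \to \R^2, \qquad F(\gamma,x_1,x_2) \defeq \gamma(x_1)-\gamma(x_2).$$
Then $F(\bar{\gamma},\bar{x}_1,\bar{x}_2)=0$, and the partial Jacobian in $(x_1,x_2)$ has columns $\bar{\gamma}'(\bar{x}_1)$ and $-\bar{\gamma}'(\bar{x}_2)$, so its determinant equals $-\det(\bar{\gamma}'(\bar{x}_1),\bar{\gamma}'(\bar{x}_2))\neq 0$ precisely because the self-intersection is non-tangential. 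The IFT delivers $\varepsilon_1>0$ and a $C^1$-map $\gamma\mapsto (x_1(\gamma),x_2(\gamma))\in I_1 \times I_2$ on the ball $B_1\defeq \{\eta\in C^1(\S^1;\R^2) \mid \norm{\eta-\bar{\gamma}}{C^1}<\varepsilon_1\}$ such that $(x_1(\gamma),x_2(\gamma))$ is the unique zero of $F(\gamma,\cdot,\cdot)$ in $I_1\times I_2$. By continuity of $\gamma\mapsto\det(\gamma'(x_1(\gamma)),\gamma'(x_2(\gamma)))$, shrinking $\varepsilon_1$ if necessary, the two tangent vectors remain linearly independent, which yields the non-tangentiality of the perturbed self-intersection.

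Next, I would show that for some $\varepsilon_0\in(0,\varepsilon_1]$ no $\gamma\in B_1$ with $\norm{\gamma-\bar{\gamma}}{C^1}<\varepsilon_0$ admits an additional self-intersection outside $(I_1\times I_2)\cup(I_2\times I_1)$. Assume otherwise: there exist $\gamma^{(n)}\to\bar{\gamma}$ in $C^1$ and pairs $y_1^{(n)}\neq y_2^{(n)}$ with $\gamma^{(n)}(y_1^{(n)})=\gamma^{(n)}(y_2^{(n)})$ and $(y_1^{(n)},y_2^{(n)})\notin (I_1\times I_2)\cup(I_2\times I_1)$. By compactness of $\S^1\times \S^1$, pass to a subsequence so that $(y_1^{(n)},y_2^{(n)})\to (y_1^*,y_2^*)$; continuity gives $\bar{\gamma}(y_1^*)=\bar{\gamma}(y_2^*)$. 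If $y_1^*\neq y_2^*$, then $(y_1^*,y_2^*)$ is a self-intersection of $\bar{\gamma}$, so the hypothesis forces $\{y_1^*,y_2^*\}=\{\bar{x}_1,\bar{x}_2\}$, contradicting the exclusion for large $n$. If instead $y_1^*=y_2^*\eqdef y^*$, I would use that $\bar{\gamma}$ is an immersion at $y^*$: pick $v\in \R^2$ with $v\cdot\bar{\gamma}'(y^*)\neq 0$, so $v\cdot\bar{\gamma}$ is strictly monotone on some neighborhood $U$ of $y^*$; by $C^1$-closeness the same holds for $v\cdot\gamma^{(n)}$ on $U$ for $n$ large. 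Since $y_1^{(n)},y_2^{(n)}\in U$ eventually, this precludes $\gamma^{(n)}(y_1^{(n)})=\gamma^{(n)}(y_2^{(n)})$ with $y_1^{(n)}\neq y_2^{(n)}$.

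The main obstacle is precisely the degenerate limit $y_1^*=y_2^*$; it cannot be excluded by the hypothesis on the self-intersection set of $\bar{\gamma}$ alone and genuinely requires $\bar{\gamma}$ to be an immersion at $y^*$, since this is what turns $\bar{\gamma}$ into a local $C^1$-embedding and hence transfers injectivity to $C^1$-small perturbations. In the intended application of the lemma (to minimizers of \eqref{eq:infprob}) this immersion property is available; elsewhere the argument runs straightforwardly from the IFT and standard compactness.
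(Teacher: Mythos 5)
Your proof is correct and takes essentially the same route as the paper: the implicit function theorem applied to $\Phi(\gamma,x_1,x_2)=\gamma(x_1)-\gamma(x_2)$ with invertibility of the partial derivative in $(x_1,x_2)$ coming from non-tangentiality, combined with a stability argument showing that curves $C^1$-close to $\bar{\gamma}$ have no self-intersections away from $(\bar{x}_1,\bar{x}_2)$. The only difference is packaging: you establish that last step by a sequential compactness argument (treating the diagonal limit via the immersion property), whereas the paper cites its openness-of-embeddings results (\Cref{lem:EmbeddChara}, \Cref{lem:embeddingsopen}, \Cref{rem:embeddings ab}) applied to $\bar{\gamma}$ restricted to $\S^1\setminus(\bar{x}_i-\delta,\bar{x}_i+\delta)$, $i=1,2$ --- results whose proofs are exactly your compactness argument and which likewise use the immersedness of $\bar{\gamma}$ that you rightly point out is needed (and is available in the intended application).
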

	\begin{proof}
		Let $\delta>0$ be small enough such that $(\bar{x}_1-\delta, \bar{x}_1+\delta)\cap (\bar{x}_2-\delta, \bar{x}_2+\delta)=\emptyset$. Moreover, taking $\varepsilon_0>0$ small enough and using \Cref{lem:EmbeddChara} and \Cref{rem:embeddings ab}, we may assume that any $\gamma \in B$ is immersed and injective when restricted to	
		$\S^1\setminus (\bar{x}_i-\delta, \bar{x}_i+\delta)$ for $i=1,2$. Now, we define $\mathcal{U}\defeq (\bar{x}_1-\delta, \bar{x}_1+\delta)\times (\bar{x}_2-\delta, \bar{x}_2+\delta) \times B$ and the function
		\begin{align}
			\Phi: \mathcal{U}  \to \R^2, \quad 
			\Phi(x_1,x_2, \gamma) \defeq\gamma(x_1)-\gamma(x_2).
		\end{align}
		Then $\Phi(\bar{x}_1, \bar{x}_2, \bar{\gamma})=0$ by assumption. Moreover, $\Phi$ is of class $C^1$ since the map $\varphi\colon(a,b)\times B\to \R^2, (x,\gamma)\mapsto \gamma(x)$ is $C^1$ for any $a<b$ with derivative $D\varphi(x,\gamma)[z, \eta] = \eta(x)+\gamma'(x)z$ for $x\in (a,b), z\in \R$, $\gamma\in B$ and $\eta\in C^1(\S^1;\R^2)$. Now, the partial derivative $D_{(x_1,x_2)}\Phi(\bar{x}_1, \bar{x}_2, \bar{\gamma})\colon \R^2\to \R^2$ given by
		\begin{align}
			D_{(x_1,x_2)}\Phi(\bar{x}_1, \bar{x}_2, \bar{\gamma})[z_1, z_2] = \bar{\gamma}'(\bar{x}_1)z_1 - \bar{\gamma}'(\bar{x}_2)z_2\quad \text{for }z_1, z_2\in\R.
		\end{align} 
		is invertible, since $\bar{\gamma}'(\bar{x}_1)$ and $\bar{\gamma}'(\bar{x}_2)$ are linearly independent. Since $\Phi$ is $C^1$, we may hence assume that  
		\begin{align}
			D_{(x_1, x_2)}\Phi(x_1, x_2, \gamma) \text{ is invertible for all }  (x_1, x_2, \gamma) \in \mathcal{U}.\label{eq:DPhi invertible}
		\end{align}
		By the implicit function theorem \cite[Theorem 4.B]{Zeidler}, after possibly reducing $\varepsilon_0>0$ and $\delta>0$, for all $\gamma \in B$ there exist unique $x_i=x_i(\gamma) \in (\bar{x}_i-\delta, \bar{x}_i+\delta), i=1,2,$ with $\Phi(x_1(\gamma), x_2(\gamma), \gamma)=0$ for all $\gamma\in B$. Moreover, the map $\gamma\mapsto (x_1(\gamma),x_2(\gamma))$ is of class $C^1$. The uniqueness of the self-intersection follows from the local uniqueness and the fact that $\gamma$ is injective on $\S^1\setminus  (\bar{x}_i-\delta, \bar{x}_i+\delta)$ for both $i=1$ and $i=2$. Furthermore, by \eqref{eq:DPhi invertible} this self-intersection is always non-tangential.
	\end{proof}

Equipped with this result, we can show that our minimizer satisfies the Euler--Lagrange equation.
\begin{lem} \label{lem:minielass}Let $\bar{\gamma}\in \mathcal{A}$ be a minimizer in \eqref{eq:infprob}. Then $\bar{\gamma}$ is a constrained elastica. 
\end{lem}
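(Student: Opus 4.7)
The plan is to show that $\bar{\gamma}$ is an interior point of $\mathcal{A}$ with respect to the $W^{2,2}$-topology. Once this is established, for any $\phi\in C^{\infty}(\mathbb{S}^1;\R^2)$ the curve $\bar{\gamma}+\varepsilon\phi$ lies in $\mathcal{A}$ for $\varepsilon$ in some open interval around $0$, so the minimality of $\bar{\gamma}$ forces \eqref{eq:variation}, hence \eqref{eq:EulerLag}. By the discussion after \eqref{eq:EulerLag} this implies $\bar{\gamma}\in C^{\infty}(\mathbb{S}^1;\R^2)$ and that \eqref{eq:elasticaeq} holds, i.e.\ $\bar{\gamma}$ is a constrained elastica.

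To verify the interior point property, I would combine the self-intersection analysis carried out in the previous lemmas. By \Cref{lem:singleton} we have $S[\bar{\gamma}]=\{p_0\}$, by \Cref{lem:35} we have $\mathrm{mult}[\bar{\gamma}](p_0)=2$, and by \Cref{lem:NoTangetialSelfIntersections} this unique self-intersection is non-tangential, i.e.\ $S_{\mathrm{tan}}[\bar{\gamma}]=\emptyset$. Therefore $\bar{\gamma}$ satisfies the hypotheses of \Cref{lem:nontang implicit function}, which yields an $\varepsilon_0>0$ and a $C^1$-ball $B=\{\eta\in C^1(\mathbb{S}^1;\R^2):\norm{\eta-\bar{\gamma}}{C^1}<\varepsilon_0\}$ such that every $\gamma\in B$ admits a (unique, non-tangential) self-intersection, hence is non-injective.

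Now I would pass from the $C^1$-ball to a $W^{2,2}$-ball. Since $\bar{\gamma}$ is an immersion and immersiveness is $C^1$-open, shrinking $\varepsilon_0$ if necessary guarantees that every $\gamma\in B$ is an immersion as well. By the continuous embedding $W^{2,2}(\mathbb{S}^1;\R^2)\hookrightarrow C^1(\mathbb{S}^1;\R^2)$ there exists $\delta>0$ so that $\norm{\gamma-\bar{\gamma}}{W^{2,2}}<\delta$ implies $\norm{\gamma-\bar{\gamma}}{C^1}<\varepsilon_0$. Consequently, every such $\gamma$ is a non-injective immersion, i.e.\ $\gamma\in\mathcal{A}$, proving that $\bar{\gamma}$ is a $W^{2,2}$-interior point of $\mathcal{A}$.

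The conceptual obstacle was already overcome in the preceding lemmas; here everything is essentially a matter of putting the pieces together, the main subtlety being the transfer of the local stability statement (\Cref{lem:nontang implicit function}), which is formulated in $C^1$, to the $W^{2,2}$-topology in which the variational problem is set. This is harmless thanks to the Sobolev embedding, and the argument concludes as outlined above.
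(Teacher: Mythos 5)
Your proposal is correct and follows essentially the same route as the paper: it combines \Cref{lem:35}, \Cref{lem:NoTangetialSelfIntersections} and the stability result \Cref{lem:nontang implicit function} to show that admissibility is preserved under small perturbations, and then invokes the discussion after \eqref{eq:EulerLag}. The only (harmless) cosmetic difference is that you phrase this as $\bar{\gamma}$ being a $W^{2,2}$-interior point of $\mathcal{A}$ via the embedding $W^{2,2}\hookrightarrow C^1$, whereas the paper applies \Cref{lem:nontang implicit function} directly to each one-parameter variation $\bar{\gamma}+\varepsilon\phi$.
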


	\begin{proof}
		By \Cref{lem:35} and \Cref{lem:NoTangetialSelfIntersections}, there exist exactly two values $x_1\neq x_2\in \S^1$ with $\bar{\gamma}(x_1) = \bar{\gamma}(x_2)=p_0$ such that $\bar{\gamma}^{\prime}(x_1)$ and $\bar{\gamma}^{\prime}(x_2)$ are linearly independent. Let $\phi\in C^{\infty}_0(\S^1;\R^2)$. By \Cref{lem:nontang implicit function}, there exists $\varepsilon_0>0$ such that for all $\varepsilon\in (-\varepsilon_0, \varepsilon_0)$ the curve $\bar{\gamma}+\varepsilon\phi$ has a self-intersection, so $\bar{\gamma}+\varepsilon\phi\in \mathcal{A}$. Then, $(-\varepsilon_0, \varepsilon_0)\ni \varepsilon\mapsto \CalE(\bar{\gamma}+\varepsilon\phi)\CalL(\bar{\gamma}+\varepsilon\phi)$ has a local minimum in $\varepsilon=0$. We conclude
		\begin{align}
			\CalL(\bar{\gamma})D\CalE(\bar{\gamma})(\phi) + \CalE(\bar{\gamma}) D\CalL(\bar{\gamma})(\phi) = 0 \quad \forall \phi \in C_0^\infty(\S^1;\R^2). &\qedhere 
		\end{align}  
	\end{proof}

Finally, we can prove our main result.
\begin{proof}[Proof of Theorem \ref{thm:main}]
By \Cref{lem:variational} there exists $\bar{\gamma} \in \mathcal{A}$ such that $$\mathcal{E}(\bar{\gamma}) \mathcal{L}(\bar{\gamma})= \inf_{ \gamma \in \mathcal{A}} \mathcal{E}(\gamma) \mathcal{L}(\gamma).$$ 
By Lemma \ref{lem:minielass} we infer that $\bar{\gamma} \in \mathcal{B}$, where $\mathcal{B}$ is defined as in \eqref{eq:defB}. This implies together with Lemma \ref{lem:charelas} that
\begin{equation}
\inf_{\gamma \in \mathcal{A}} \mathcal{E}(\gamma) \mathcal{L}(\gamma) = \mathcal{E}(\bar{\gamma}) \mathcal{L}(\bar{\gamma}) = \inf_{\gamma \in \mathcal{B}} \mathcal{E}(\gamma) \mathcal{L}(\gamma) =  \mathcal{E}(\gamma^*) \mathcal{L}(\gamma^*). 
\end{equation}
The claim follows.
\end{proof}
\begin{remark}\label{rem:5.12}
If we seek to generalize Theorem \ref{thm:main} in higher codimension, e.g. in $\mathbb{R}^3$, the arguments in this section do not immediately carry over as more elasticae would need to be discussed in a generalized version of Lemma \ref{lem:charelas}.  
\\
An exhaustive classification of elasticae in $\mathbb{R}^3$ in \cite{LangerKnot} shows that all non-planar elasticae $\gamma$ are embedded and knotted. Therefore, by the Fáry--Milnor Theorem (cf. \cite{Fary,MilnorKnots}) their energy is bounded from below since
\begin{equation}
\mathcal{E}(\gamma) \mathcal{L}(\gamma) =  \int_{\S^1} |\kappa|^2 \; \mathrm{d}s \int_{\S^1} 1 \; \mathrm{d}s \geq  \left( \int_{\S^1} |\kappa| \; \mathrm{d}s \right)^2 \geq 16 \pi^2>c^*,
\end{equation}
where we used \eqref{eq:fig eight 16pi^2} in the last inequality.
Clearly, multi-fold covers of these non-planar elasticae will only have higher energy. Hence, $\gamma^*$ is still the minimizer among all non-embedded elasticae in $\R^3$, generalizing \Cref{lem:charelas} to higher codimension.
%
%
Moreover, we can prove as in Lemma \ref{lem:singleton} and  Lemma \ref{lem:35} that there exists a minimizer $\bar{\gamma}$ of $\mathcal{E} \mathcal{L}$ among closed non-embedded curves in $\mathbb{R}^3$ which has only one point of self-intersection with multiplicity $2$. However, the methods used in \Cref{lem:NoTangetialSelfIntersections} and \Cref{lem:minielass} are only available in codimension one, making it unclear, whether the minimizer is an elastica. This is the only obstruction to a generalization of \Cref{thm:main} to higher codimension.
\end{remark}

\section{An application: the elastic flow}\label{sec:flow}

We consider a family of smooth curves $\gamma\colon [0,T)\times\S^1\to\R^2$ evolving with respect to the gradient flow equation
\begin{align}\label{eq:Elastic FLow}
	\partial_t \gamma & = - \nabla_s^2 \vKap -\frac{1}{2} \abs{\vKap}^2\vKap + \lambda\vKap,
\end{align}
where $\vKap=\vKap[\gamma] = \partial_s^2 \gamma$ is the curvature vector and $\nabla_s = \partial_s^{\perp_{\gamma}}$ is the normal part of the arc-length derivative. 
Here, we either consider the \emph{length penalized} elastic flow, where $\lambda\geq 0$ is a fixed number  or the \emph{length preserving} elastic flow, where $\lambda=\lambda(\gamma(t,\cdot))\in\R$ depends on the solution and is given by
\begin{align}\label{eq:defLambda}
	\lambda &=	\frac{\int_{\S^1} \langle\nabla_s^2 \vKap + \frac{1}{2}\abs{\vKap}^2\vKap, \vKap\rangle \diff s}{\int_{\S^1} \abs{\vKap}^2\diff s}.
\end{align}
It can be easily checked that the length remains constant along solutions of \eqref{eq:Elastic FLow} with $\lambda$ given by \eqref{eq:defLambda}, since

\begin{align}\label{eq:length constant}
	\frac{\diff}{\diff t}\CalL(\gamma) = \int_{\S^1} \langle\nabla\CalL(\gamma), \partial_t\gamma\rangle \diff s = -\int_{\S^1} \langle \vKap, \partial_t \gamma\rangle\diff s=0,
\end{align}

by \eqref{eq:defLambda}.
Both geometric flows have been studied in \cite{DKS}, where long-time existence and subconvergence as $t\to\infty$ has been established. Using \Cref{thm:main}, we establish an energy bound which guarantees embeddedness along the flow. Note that in contrast to second-order evolutions, this does not follow from a maximum principle, as \eqref{eq:Elastic FLow} is of fourth order. Moreover as in \cite{Loja,MantegazzaPozzetta,RuppSpener} we can apply a suitable \emph{{\L}ojasiewicz--Simon inequality} to deduce convergence and then \Cref{lem:charelas} to give a precise characterization of the limit.

\begin{thm}\label{thm:conv length pres}
	Let $\gamma_0\in C^{\infty}(\S^1;\R^2)$ be an embedded curve such that $\CalE(\gamma_0)\CalL(\gamma_0)<c^*$. Then the elastic flow \eqref{eq:Elastic FLow} with fixed length and initial datum $\gamma(0,\cdot)=\gamma_0$ remains embedded for all times and converges, as $t\to\infty$, after reparametrization with constant speed to a one-fold cover of a circle with radius $\frac{\CalL(\gamma_0)}{2\pi}$.
\end{thm}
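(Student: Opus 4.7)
The plan is to combine the sharp embeddedness threshold of \Cref{thm:main} with the gradient flow structure and a {\L}ojasiewicz--Simon argument, and then to exploit the classification of closed constrained elasticae in \Cref{lem:charelas} to identify the limit.

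First, I would observe that along the length-preserving flow \eqref{eq:Elastic FLow} the length stays constant by \eqref{eq:length constant}, and a direct gradient flow computation shows that the elastic energy is non-increasing: testing \eqref{eq:Elastic FLow} against $\nabla \CalE(\gamma)=\nabla_s^2\vKap+\frac12|\vKap|^2\vKap$ and using the definition \eqref{eq:defLambda} of $\lambda$ yields
\begin{equation}
\tfrac{\diff}{\diff t}\CalE(\gamma(t,\cdot))=-\int_{\S^1}\left|\nabla_s^2\vKap+\tfrac12|\vKap|^2\vKap-\lambda\vKap\right|^2\diff s\leq 0.
\end{equation}
Therefore $\CalE(\gamma(t,\cdot))\CalL(\gamma(t,\cdot))\leq \CalE(\gamma_0)\CalL(\gamma_0)<c^*$ for all $t\in[0,T)$, and \Cref{thm:main} ensures that $\gamma(t,\cdot)$ is embedded for all times at which the flow exists.

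Next, I would invoke the long-time existence and smooth subconvergence result of Dziuk--Kuwert--Sch\"atzle \cite{DKS} to conclude $T=\infty$ and that, after reparametrization with constant speed and translation, there exists a sequence $t_k\to\infty$ and a smooth curve $\gamma_\infty$ with $\gamma(t_k,\cdot)\to\gamma_\infty$ smoothly. The limit is a critical point of $\CalE$ subject to the length constraint $\CalL\equiv \CalL(\gamma_0)$, hence a constrained elastica in the sense of \eqref{eq:elasticaeq}. To promote subconvergence to full convergence, I would apply a {\L}ojasiewicz--Simon inequality for $\CalE$ on the constraint manifold $\{\CalL=\CalL(\gamma_0)\}$ as in \cite{Loja,MantegazzaPozzetta,RuppSpener}; combined with the monotonicity of $\CalE$ and standard interpolation estimates controlling higher derivatives uniformly along the flow, this yields convergence of the whole flow (after constant speed reparametrization) to $\gamma_\infty$.

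To identify $\gamma_\infty$, I would use \Cref{lem:charelas}: up to rescaling, isometry and reparametrization, the only closed constrained elasticae are multi-fold covers of circles and of the figure eight $\gamma^*$. By lower semicontinuity and monotonicity of $\CalE\CalL$ we have $\CalE(\gamma_\infty)\CalL(\gamma_\infty)\leq \CalE(\gamma_0)\CalL(\gamma_0)<c^*=\CalE(\gamma^*)\CalL(\gamma^*)$, so $\gamma_\infty$ cannot be (a multi-cover of) the figure eight. For a $k$-fold cover of a circle of radius $r$ a direct computation gives $\CalE\CalL=4\pi^2k^2$; since $c^*<16\pi^2$ by \eqref{eq:fig eight 16pi^2}, the only admissible case is $k=1$, i.e.\ a simply covered circle. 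The radius is then forced by $2\pi r=\CalL(\gamma_\infty)=\CalL(\gamma_0)$, giving the claimed limit.

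The main obstacle I expect is the {\L}ojasiewicz--Simon step: one has to verify the analyticity and Fredholm hypotheses for the constrained functional in the appropriate Sobolev setting, and control the constant speed reparametrization uniformly in $t$ so that the inequality can be iterated to produce a Cauchy property in time. The remaining ingredients, namely the energy-monotonicity, the application of \Cref{thm:main} to preserve embeddedness, and the elimination of the figure eight and higher coverings of circles via the explicit numerical comparison with $c^*$, are straightforward once the convergence is established.
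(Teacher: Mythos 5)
Your plan is correct and follows essentially the same route as the paper: monotonicity of $\CalE$ with fixed length plus \Cref{thm:main} to preserve embeddedness, the results of \cite{DKS} for global existence and smooth subconvergence, a constrained {\L}ojasiewicz--Simon inequality (which the paper takes from \cite{RuppLoja}, carrying out the standard $G(t)=(\CalE-\CalE(\gamma_\infty))^{\theta}$ iteration in detail) to upgrade to full convergence, and \Cref{lem:charelas} with the energy threshold $c^*<16\pi^2$ to identify the limit as the simply covered circle of radius $\CalL(\gamma_0)/(2\pi)$. The only differences are cosmetic: the paper rules out non-circular limits by noting the limit itself is embedded, while you compare energies of multi-covers directly, and the paper executes the {\L}ojasiewicz--Simon iteration you only outline.
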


\begin{proof}
	We write $\gamma(t)\defeq \gamma(t, \cdot)$ and observe that the energy $\CalE(\gamma(t))$ is decreasing, while $\CalL(\gamma(t))=\CalL(\gamma_0)$ is kept fixed. Hence for $t\geq  0$, we have $\CalE(\gamma(t))\CalL(\gamma(t))<\CalE(\gamma^{\ast})\CalL(\gamma^{\ast})$ by the assumption on the initial datum, so $\gamma(t)$ is embedded for all $t\geq 0$ by \Cref{thm:main}. Now, by \cite[Theorem 3.3]{DKS}, the flow exists for all times and it holds 	$\Norm{\partial_s^{m}\vKap(t)}{L^{\infty}} \leq C_m,$
	for some $C_m>0$ and all $m\in \N_0$, $t\in [0,\infty)$. Thus, if $\tilde{\gamma}$ denotes the reparametrization by arc-length, we get
	\begin{align}\label{eq:convbound 0}
	\norm{\partial^m_x\tilde{\gamma}(t,\cdot)}{L^{\infty}}\leq C_m \text{ for all }m\in\N, t\in [0,\infty).
	\end{align}
	If we define the integral average $p(t)\defeq \int_{\S^1} \tilde{\gamma}(t, \cdot)\diff x\in\R^2$, we find
	\begin{align}\label{eq:convbound 1}
	\norm{\tilde{\gamma}(t, \cdot)-p(t)}{L^{\infty}} \leq \CalL(\tilde{\gamma}(t)) = \CalL(\gamma_0).
	\end{align}
	Now, if $t_n\to\infty$ is any sequence, using the Arzelà--Ascoli theorem and a diagonal sequence argument, after passing to a subsequence, we find $\tilde{\gamma}(t_n)-p(t_n)\to \gamma_{\infty}$ in $C^m(\S^1;\R^2)$ as $n\to\infty$ for every $m\in \N_0$, where $\gamma_{\infty}\in C^{\infty}(\S^1;\R^2)$ is a closed elastica, cf. \cite[Theorem 3.2]{DKS}. Since $\CalE(\gamma_\infty)\CalL(\gamma_\infty)<\CalE(\gamma^{\ast})\CalL(\gamma^{\ast})$, the curve $\gamma_\infty$ is embedded and has to be a (translation and reparametrization) of the one-fold cover of a circle by \Cref{lem:charelas}. Consequently, its radius has to be $\frac{\CalL(\gamma_0)}{2\pi}$ since $\CalL(\gamma_\infty)=\CalL(\gamma_0)$. However, different sequences could still yield circles with different centers.
	
	Even so, this cannot happen, since $\CalE$ satisfies a constrained {\L}ojasiewicz--Simon gradient inequality, cf. \cite{RuppLoja}. This can be proven using \cite[Corollary 5.2]{RuppLoja}, since the energies $\CalE$ and $\CalL$ are analytic and the length is of lower order, see also {\cite[Theorem 4.8]{RuppSpener}} for the analogous argument in the case of clamped curves. Hence, there exist constants $C_{LS}, \sigma>0$ and $\theta\in (0, \frac{1}{2}]$ such that for all $\gamma\in W^{4,2}(\S^1;\R^2)$ with $\norm{\gamma-\gamma_{\infty}}{W^{4,2}}\leq \sigma$ and $\CalL(\gamma)=\CalL(\gamma_0)$ we have

	\begin{align}\label{eq:Lojasiewicz}
	\abs{\CalE(\gamma)-\CalE(\gamma_{\infty})}^{1-\theta}\leq C_{LS}\norm{\nabla\CalE(\gamma)+\lambda(\gamma)\nabla\CalL(\gamma)}{L^2},
	\end{align}
	
	with $\lambda(\gamma)$ as in \eqref{eq:defLambda}. To prove the full convergence statement, we furthermore assume $\norm{\tilde{\gamma}(t_n)-p(t_n)-\gamma_\infty}{W^{4,2}}< \sigma$ for all $n\in \N$. We define
	\begin{align}
	s_n \defeq \sup\{s\geq t_n \mid \norm{\tilde{\gamma}(t)-p(t)-\gamma_\infty}{W^{4,2}}<\sigma\text{ for all }t\in [t_n, s]\},
	\end{align}
	and observe $s_n>t_n$ by smoothness. Now, the function
	\begin{align}
	G(t)\defeq \left(\CalE(\tilde{\gamma}(t))-\CalE(\gamma_\infty)\right)^{\theta} = \left(\CalE({\gamma}(t))-\CalE(\gamma_\infty)\right)^{\theta}
	\end{align}
	is decreasing and satisfies $\lim_{t\to\infty}G(t)=0$. Moreover, since $\gamma$ solves \eqref{eq:Elastic FLow} we have
	\begin{align}
	-\frac{\diff}{\diff t}G &= \theta \left(\CalE(\tilde{\gamma})-\CalE(\gamma_\infty)\right)^{\theta-1} \left(-\frac{\diff}{\diff t}\CalE({\gamma})\right) \\
	&= -\theta \left(\CalE(\tilde{\gamma})-\CalE(\gamma
	_\infty)\right)^{\theta-1} \left\langle \nabla \CalE({\gamma}), \partial_t {\gamma}\right\rangle_{L^2(\diff s_{{\gamma}})}\\
	&= -\theta \left(\CalE(\tilde{\gamma})-\CalE(\gamma
	_\infty)\right)^{\theta-1} \left\langle \nabla \CalE({\gamma})+\lambda(\gamma)\nabla\CalL(\gamma), \partial_t {\gamma}\right\rangle_{L^2(\diff s_{{\gamma}})}\\
	&=  \theta \left(\CalE(\tilde{\gamma})-\CalE(\gamma
	_\infty)\right)^{\theta-1} \norm{\nabla \CalE({\gamma})+\lambda({\gamma})\nabla\CalL({\gamma})}{L^2(\diff s_{{\gamma}})} \norm{\partial_t{\gamma}}{L^2(\diff s_{{\gamma}})},
	\end{align}
	where we used that $\langle \nabla\CalL(\gamma), \partial_t \gamma\rangle_{L^2(\diff s_{\gamma})}=0$ by \eqref{eq:length constant}. Furthermore, using the geometric transformation of the energy and the $L^2$-gradient, we find
	\begin{align}
	&\left(\CalE(\tilde{\gamma})-\CalE(\gamma
	_\infty)\right)^{\theta-1} \norm{\nabla \CalE({\gamma})+\lambda({\gamma})\nabla\CalL({\gamma})}{L^2(\diff s_{{\gamma}})}  \\
	&\quad= \left(\CalE(\tilde{\gamma}-p)-\CalE(\gamma
	_\infty)\right)^{\theta-1} \norm{\nabla \CalE(\tilde{\gamma}-p)+\lambda(\tilde{\gamma}-p)\nabla\CalL(\tilde{\gamma}-p)}{L^2(\diff s_{\tilde{\gamma}-p})}.
	\end{align}
	Thus, by the definition of $s_n$ and the {\L}ojasiewicz--Simon inequality \eqref{eq:Lojasiewicz} we have
	\begin{align}
	-\frac{\diff}{\diff t}G(t) &\geq \frac{\theta}{C_{LS}}\norm{\partial_t \gamma(t)}{L^2(\diff s_{\gamma(t)})}\quad \text{ for all }t\in [t_n, s_n).
	\end{align}
	Consequently, by \cite[Lemma 4.10 and Remark 4.11]{RuppSpener}, we find
	\begin{align}\label{eq:Loja dt G}
	-\frac{\diff}{\diff t}G(t) \geq C\norm{\partial_t \tilde{\gamma}(t)}{L^2(\diff x)}\quad \text{ for all }t\in [t_n, s_n)
	\end{align}
	for some $C=C(\CalL(\gamma_0), \CalE(f_0), \theta, C_{LS})>0$. Thus, for all $t\in [t_n, s_n)$ we obtain
	\begin{align}\label{eq:goestozero}
	\norm{\tilde{\gamma}(t)-\tilde{\gamma}(t_n)}{L^2(\diff x)}\leq \frac{1}{C} G(t_n)\to 0,
	\end{align}
	as $n\to\infty$  since $\lim_{n\to\infty}G(t_n)=0$. Now, we assume that all of the $s_n$ are finite. By continuity, \eqref{eq:goestozero} also holds for $t=s_n$ and we observe
	\begin{align}
	\norm{p(t_n)-p(s_n)}{L^2(\diff x)}^2 &=  \abs{\int_{\mathbb{S}^1}\tilde{\gamma}(t_n)\diff x-\int_{\mathbb{S}^1}\tilde{\gamma}(s_n)\diff x}^2\nonumber \\
	&\leq \int_{\S^1} \abs{\tilde{\gamma}(t_n)-\tilde{\gamma}(s_n)}^2\diff x \to 0,
	\end{align}
	using Jensen's inequality and \eqref{eq:goestozero}. Moreover, using the bounds in \eqref{eq:convbound 0} and \eqref{eq:convbound 1} we may, as at the beginning of the proof, assume that $\tilde{\gamma}(s_n)-p(s_n)\to\psi$ smoothly as $n\to\infty$ for some $\psi\in C^{\infty}(\S^1;\R^2)$. Thus, we find
	\begin{align}\label{eq:loja conv 1} \norm{\tilde{\gamma}(s_n)-p(s_n)-\gamma_\infty}{L^2(\diff x)}&\leq \norm{\tilde{\gamma}(s_n)-\tilde{\gamma}(t_n)}{L^2(\diff x)} + \norm{p(t_n)-p(s_n)}{L^2(\diff x)} \nonumber \\
	&\quad + \norm{\tilde{\gamma}(t_n)-p(t_n)-\gamma_{\infty}}{L^2(\diff x)} \to 0, \text{ as }n\to\infty.
	\end{align}
	Therefore, $\psi=f_\infty$. However, by the definition of $s_n$ and a continuity argument, we have 
	\begin{align}
		\norm{\psi-f_\infty}{W^{4,2}}=\lim_{n\to\infty}\norm{\tilde{\gamma}(s_n)-p(s_n)-\gamma_\infty}{W^{4,2}} = \sigma>0,
	\end{align}
	 a contradiction. Consequently, we have $s_{n_0}=\infty$ for some $n_0\in \N$ and therefore ${\norm{\tilde{\gamma}(t)-p(t)-\gamma_\infty}{W^{4,2}}<\sigma}$ for all $t\geq t_{n_0}$. But then \eqref{eq:Loja dt G} implies that for any $t_{n_0}\leq t \leq t'$ we have
	\begin{align}
	\norm{\tilde{\gamma}(t)-\tilde{\gamma}(t')}{L^2(\diff x)} \leq \int_{t}^{t'}\norm{\partial_t \tilde{\gamma}(\tau)}{L^2(\diff x)}\diff \tau \to 0,
	\end{align}
	as $t', t\to\infty$ by dominated convergence. Therefore, the limit $\lim_{t\to\infty}\tilde{\gamma}(t)$ has to exist in $L^2(\diff x)$ and hence equals $\gamma_\infty$. A subsequence argument shows that for any $m\in \N_0$ we have $\norm{\tilde{\gamma}(t)-\gamma_\infty}{{C}^{m}(I;\R^2)}\to 0$ as $t\to\infty$, i.e. the convergence is smooth.
\end{proof}

The bound in \Cref{thm:conv length pres} is optimal, since the stationary flow $\gamma(t)\equiv \gamma^{\ast}$ with $\gamma^{\ast}$ as in \Cref{def:figeight} solves \eqref{eq:Elastic FLow} but possesses self-intersections for all times.

\begin{thm}Let $\lambda> 0$ and let $\gamma_0\in C^{\infty}(\S^1;\R^2)$ be an embedded curve such that $$\frac{(\CalE(\gamma_0)+\lambda\CalL(\gamma_0))^2}{4\lambda}<c^*.$$ Then, the length penalized elastic flow \eqref{eq:Elastic FLow} remains embedded for all times and converges, as $t\to\infty$, after reparametrization  to a one-fold cover of a circle with radius $\frac{1}{\sqrt{2\lambda}}$.
\end{thm}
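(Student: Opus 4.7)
The plan is to mimic the proof of \Cref{thm:conv length pres}, with the crucial adaptation that the natural Lyapunov functional is now the unconstrained length-penalized energy $\CalE_\lambda(\gamma)\defeq \CalE(\gamma)+\lambda\CalL(\gamma)$. The flow \eqref{eq:Elastic FLow} with fixed $\lambda>0$ is precisely the $L^2(\diff s)$-gradient flow of $\CalE_\lambda$, so $\CalE_\lambda(\gamma(t))\leq \CalE_\lambda(\gamma_0)$ for all $t\geq 0$. Combined with the elementary AM--GM estimate
\begin{equation}
\CalE(\gamma(t))\CalL(\gamma(t)) \leq \frac{(\CalE(\gamma(t))+\lambda\CalL(\gamma(t)))^2}{4\lambda} \leq \frac{(\CalE(\gamma_0)+\lambda\CalL(\gamma_0))^2}{4\lambda} < c^*,
\end{equation}
this immediately gives $\CalE(\gamma(t))\CalL(\gamma(t))<c^*$ for all $t$, whence \Cref{thm:main} forces $\gamma(t)$ to be embedded along the entire flow.

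For long-time existence and subconvergence I would invoke \cite[Theorem 3.3]{DKS} to obtain uniform $L^\infty$-bounds on the arc-length derivatives of $\vKap$. The length is controlled from above by $\CalL(\gamma(t))\leq \lambda^{-1}\CalE_\lambda(\gamma_0)$ and from below via the Fenchel--Cauchy--Schwarz chain $4\pi^2 \leq \CalK(\gamma(t))^2\leq \CalE(\gamma(t))\CalL(\gamma(t))$ together with $\CalE(\gamma(t))\leq \CalE_\lambda(\gamma_0)$, yielding $\CalL(\gamma(t))\geq 4\pi^2/\CalE_\lambda(\gamma_0)$. Arzelà--Ascoli and a diagonal argument, applied to the constant speed reparametrization $\tilde\gamma(t)$ shifted by its integral average $p(t)$, produce smooth subsequential limits $\tilde\gamma(t_n)-p(t_n)\to\gamma_\infty$, where $\gamma_\infty$ is a closed constrained elastica satisfying \eqref{eq:elasticaeq}. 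Since $\CalE(\gamma_\infty)\CalL(\gamma_\infty)<c^*$ by lower semicontinuity, the limit $\gamma_\infty$ is embedded, and by \Cref{lem:charelas} it must therefore be a one-fold covered circle. Substituting $\kappa\equiv \pm 1/r$ into \eqref{eq:elasticaeq} reduces it to $\tfrac{1}{2r^2}=\lambda$, fixing $r=(2\lambda)^{-1/2}$.

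To upgrade subconvergence to full convergence, I would repeat the {\L}ojasiewicz--Simon argument from the proof of \Cref{thm:conv length pres} essentially verbatim, but for the unconstrained functional $\CalE_\lambda$. The required local inequality
\begin{equation}
\abs{\CalE_\lambda(\gamma)-\CalE_\lambda(\gamma_\infty)}^{1-\theta}\leq C_{LS}\Norm{\nabla\CalE(\gamma)+\lambda\nabla\CalL(\gamma)}{L^2}
\end{equation}
near $\gamma_\infty$ in $W^{4,2}$ follows from \cite[Corollary 5.2]{RuppLoja} since $\CalE_\lambda$ is analytic; in fact the argument simplifies compared to the length-preserving case because there is no Lagrange multiplier to carry through. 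The continuity-in-$W^{4,2}$ bootstrap driven by subsequential limits and the integral-average argument then promotes subconvergence to full smooth convergence to a fixed translate of the circle of radius $(2\lambda)^{-1/2}$. The main obstacle, as in \Cref{thm:conv length pres}, is the translation-invariance of the limiting manifold of circles, which is precisely what necessitates the {\L}ojasiewicz--Simon machinery; the uniform positive lower bound on $\CalL(\gamma(t))$, automatic in the length-preserving setting, here requires the Fenchel argument above.
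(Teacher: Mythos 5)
Your proposal is correct and takes essentially the same route as the paper: the AM--GM estimate together with the monotonicity of $\CalE+\lambda\CalL$ and \Cref{thm:main} gives embeddedness for all times, subconvergence plus \Cref{lem:charelas} and the elastica equation identify the circle of radius $\frac{1}{\sqrt{2\lambda}}$, and for full convergence you carry out the unconstrained {\L}ojasiewicz--Simon variant which the paper itself offers as an alternative to citing \cite{MantegazzaPozzetta}. Only a minor slip: for the penalized flow the relevant result in \cite{DKS} is Theorem 3.2 rather than Theorem 3.3.
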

\begin{proof}
	By \cite[Theorem 3.2]{DKS} the flow exists for all times and also satisfies the bounds \eqref{eq:convbound 0} and \eqref{eq:convbound 1}. Moreover, we have the bounds
	\begin{align}\label{eq:penalized flow energy bounds}
		\CalE(\gamma(t))+\lambda\CalL(\gamma(t))\leq \CalE(\gamma_0)+\lambda\CalL(\gamma_0)
	\end{align}
	for all $t\geq 0$. An easy calculation yields 
	$\max\{ xy \mid x,y\in (0,\infty), x+\lambda y\leq M\} = \frac{M^2}{4\lambda}$ for any $M>0$, so we have $\CalE(\gamma(t))\CalL(\gamma(t))\leq \frac{(\CalE(\gamma_0)+\lambda\CalL(\gamma_0))^{2}}{4\lambda}<c^*$ by assumption.
	Thus, using \Cref{thm:main} the flow remains embedded. 
	
	As in the proof of \Cref{thm:conv length pres},  by the uniform estimates \eqref{eq:convbound 0} and \eqref{eq:convbound 1}, there exists $t_n\to\infty$ such that $\tilde{\gamma}(t_n)-p(t_n)\to\gamma_\infty$, where $\tilde{\gamma}$ is a reparametrization of $\gamma$ and $\gamma_\infty$ is a closed elastica. Here $p(t) = \int_{\S^1}\tilde{\gamma}(t) \diff x\in \R^2$. The convergence of the flow as $t\to\infty$ has been established in \cite[Theorem 1.2 and Remark 1.4]{MantegazzaPozzetta}. Alternatively, one can easily modify the arguments in the proof of \Cref{thm:conv length pres} and apply a  (unconstrained) {\L}ojasiewicz--Simon gradient inequality for the penalized elastic energy.
	 
	Since the limit is necessarily an elastica with $\CalE(\gamma_\infty)\CalL(\gamma_\infty)<c^*$, it can only be a one-fold cover of a  circle by \Cref{lem:charelas}. Denoting by $R>0$ its radius, we observe
	\begin{align}
		\partial_s^2\kappa + \frac{1}{2}\kappa^3 - \lambda\kappa = \frac{1}{2R^3}-\frac{\lambda}{R},
	\end{align}
	which equals zero if and only if $R=\frac{1}{\sqrt{2\lambda}}$.
\end{proof}

\section*{Acknowledgments}
Marius Müller was supported by the LGFG Grant (Grant no. 1705 LGFG-E).
Fabian Rupp is supported by the Deutsche Forschungsgemeinschaft (DFG,
German Research Foundation) - project no. 404870139. {Both authors would like to thank Anna Dall'Acqua for helpful discussions.}

\begin{appendices}

\section{Differential geometry in Sobolev spaces}\label{sec:AppA}

In this section, we will review some standard results from elementary differential geometry in the setting of $W^{2,2}$-curves.

\begin{thm}[Fenchel's Theorem]\label{thm:Fenchel}
	Let $\gamma\in C^2(\S^1;\R^2)$ be an immersed curve. Then $\CalK(\gamma)\geq 2\pi$ with equality if and only if $\gamma$ is  embedded and convex.
\end{thm}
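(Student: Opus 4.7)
My plan is to work with the lifted tangent angle. Parametrize $\gamma$ by arc-length so $L=\CalL(\gamma)$ and consider the unit tangent $T\colon [0,L]\to\S^1$, $T(s)=\gamma'(s)$. Since $T$ is continuous and $[0,L]$ is simply connected, there exists a continuous \emph{tangent angle} $\theta\colon[0,L]\to\R$ with $T(s)=(\cos\theta(s),\sin\theta(s))$. For a $C^2$ arc-length parametrization one has $\kappa=\theta'$, hence
\begin{equation}
\CalK(\gamma)=\int_0^L|\kappa|\ds=\int_0^L|\theta'|\ds = V_0^L(\theta),
\end{equation}
the total variation of $\theta$. Moreover, since $\gamma$ is closed, $T(0)=T(L)$, so $\theta(L)-\theta(0)=2\pi n$ for some $n\in\Z$ (the turning number).

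For the lower bound I would split into two cases. If $n\neq 0$, then $\CalK(\gamma)=V_0^L(\theta)\geq|\theta(L)-\theta(0)|=2\pi|n|\geq 2\pi$. If $n=0$, I argue by contradiction: suppose $V_0^L(\theta)<2\pi$. Since $\theta(L)=\theta(0)$, the positive and negative variations of $\theta$ coincide, so each equals $V_0^L(\theta)/2<\pi$, and consequently $\max\theta-\min\theta<\pi$. Pick the midpoint $\theta_0$ of the image of $\theta$; then $\langle T(s),v\rangle>0$ for all $s$ with $v\defeq(\cos\theta_0,\sin\theta_0)$. But closedness of $\gamma$ forces
\begin{equation}
0=\langle\gamma(L)-\gamma(0),v\rangle=\int_0^L\langle T(s),v\rangle\ds>0,
\end{equation}
a contradiction. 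Hence $\CalK(\gamma)\geq 2\pi$ in all cases.

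For the equality characterization, I would first rule out $n=0$ at equality: if $V_0^L(\theta)=2\pi$ and $n=0$, the same argument forces $\langle T(s),v\rangle\geq 0$ with integral zero, hence $\langle T(s),v\rangle\equiv 0$, so $T\equiv\pm v^{\perp}$ and $\gamma$ lies on a line, impossible for a closed curve. Thus $|n|=1$ and $V_0^L(\theta)=|\theta(L)-\theta(0)|$, which means $\theta$ is monotone, equivalently $\kappa$ does not change sign: $\gamma$ is convex. From convexity with turning number $\pm 1$, one classically concludes embeddedness (parametrizing the curve by $\theta$ shows it is the boundary of its convex hull). Conversely, if $\gamma$ is embedded and convex, Hopf's Umlaufsatz gives $n=\pm 1$ and the nonvanishing-sign property of $\kappa$ gives $\CalK=|2\pi n|=2\pi$.

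The main obstacle is the equality case, specifically the implication \emph{convex $\Rightarrow$ embedded} for a closed $C^2$ curve with turning number $\pm 1$. The cleanest route I know is to reparametrize by the tangent angle $\theta\in[0,2\pi)$ (possible since $\theta$ is strictly monotone when $\kappa$ has a strict sign), write $\gamma$ as the boundary of its convex hull via the support function, and observe that two distinct parameter values in $[0,2\pi)$ correspond to distinct supporting lines and hence distinct points. The generic-case inequality, by contrast, is essentially a one-line consequence of the variation identity above.
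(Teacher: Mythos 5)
The paper itself offers no argument here: it simply cites Fenchel's original paper for the inequality and Brickell--Hsiung for the equality characterization, so any complete proof you give is necessarily a different route. Your inequality part is correct and self-contained: the identity $\CalK(\gamma)=V_0^L(\theta)$, the case $n\neq 0$, and the case $n=0$ via the balance of positive and negative variations and the auxiliary direction $v$ all check out, and the same reasoning correctly rules out $n=0$ at equality (a constant tangent cannot close up). The converse implication (embedded and convex $\Rightarrow$ equality) is also fine, granted the classical fact that an embedded convex curve has curvature of one sign.

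The genuine gap is the step you yourself flag: from equality you only get that $\theta$ is \emph{weakly} monotone, i.e.\ $\kappa\geq 0$ (or $\leq 0$), and you must conclude embeddedness. Equality does \emph{not} give $\kappa$ a strict sign --- it may vanish, even on whole parameter intervals (a smoothed stadium curve attains $\CalK=2\pi$ and has flat pieces) --- so your parenthetical ``strictly monotone when $\kappa$ has a strict sign'' is not available, the proposed reparametrization by $\theta\in[0,2\pi)$ is not injective, and the claim that distinct parameter values give distinct supporting lines and hence distinct points fails twice over: parameter values in a flat piece share their supporting line, and the regularity of the convex hull boundary that would let a supporting line determine its contact point is precisely what is being proven. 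A self-contained repair in the same spirit: fix $s_0$, set $h(s)\defeq\langle\gamma(s)-\gamma(s_0),\vec{n}(s_0)\rangle$, note $h'(s)=\sin(\theta(s)-\theta(s_0))$, and use monotonicity of $\theta$ with total increase $2\pi$ to see that $h$ is nondecreasing and then nonincreasing over one period, hence $h\geq 0$: the curve lies on one side of each tangent line. If $\gamma(s_1)=\gamma(s_2)$ with $s_1\neq s_2$, this unimodality (with base point $s_1$) forces $h\equiv 0$ on one of the two arcs joining $s_1$ to $s_2$; then $\sin(\theta-\theta(s_1))\equiv0$ there, and continuity plus monotonicity of $\theta$ force $\theta$ to be constant on that arc, so the arc is a straight segment and $\gamma(s_2)\neq\gamma(s_1)$, a contradiction. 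Alternatively, you could do as the paper does and quote the equality case from the literature, but as written your argument does not yet establish it.
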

\begin{proof}
	See \cite[Satz 1]{Fenchel}. An explicit characterization of the equality case can be deduced from \cite[Theorem 3]{BrickellHsiung}, for instance.
\end{proof}
\begin{lem}[Angle Function]\label{lem:liftingRegularity}
	For an immersed curve $\gamma\in W^{2,2}(\S^1;\R^2)$, there exists $\theta \in W^{1,2}((0,1);\R)$ such that
	\begin{align}\label{eq:lifting}
		\frac{\gamma^{\prime}(x)}{\abs{\gamma^{\prime}(x)}} = \begin{pmatrix}
		\cos \theta(x) \\ \sin \theta(x)
		\end{pmatrix} \text{ for all }x\in \S^1.
	\end{align}
	We call $\theta$ an \emph{angle function} for $\gamma$.
	Moreover, any two angle functions satisfying \eqref{eq:lifting} can only differ by an integer multiple of $2\pi$.  
\end{lem}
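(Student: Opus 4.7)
The plan is to first construct a continuous lift $\theta$ via standard covering space theory, and then to upgrade its regularity to $W^{1,2}$ by deriving an explicit formula for its weak derivative. Throughout, I set $\tau \defeq \gamma'/\abs{\gamma'}$. Since $\gamma\in W^{2,2}((0,1);\R^2)$ embeds into $C^1$, the derivative $\gamma'$ is continuous, and since $\gamma$ is immersed, $\abs{\gamma'}$ is bounded away from zero. Consequently, $\tau \in W^{1,2}((0,1);\R^2)$ with $\abs{\tau} \equiv 1$.

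First I would invoke the existence of a continuous lift $\theta_0 \in C^0([0,1];\R)$ satisfying $\tau = (\cos\theta_0, \sin\theta_0)$. This follows from standard covering space arguments applied to $p\colon \R \to \S^1$, $t\mapsto (\cos t, \sin t)$: by uniform continuity of $\tau$ on $[0,1]$, one can partition $[0,1]$ into finitely many subintervals on each of which $\tau$ stays in an open semicircle, and then glue together local lifts constructed by composing $\tau$ with smooth local inverses of $p$. Uniqueness up to $2\pi \Z$ follows from the standard monodromy argument: the difference of two continuous lifts is continuous, $2\pi\Z$-valued, and hence constant.

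Next I would upgrade the regularity. The key observation is that differentiating $\cos\theta_0 = \tau_1$ and $\sin\theta_0 = \tau_2$ formally gives $\theta_0' = \tau_1 \tau_2' - \tau_2 \tau_1'$, and the right-hand side lies in $L^2((0,1))$ since $\tau_i$ are continuous (hence bounded) and $\tau_i' \in L^2$. To make this rigorous, I would define $v \defeq \tau_1\tau_2' - \tau_2\tau_1' \in L^2$ and set $\tilde\theta(x)\defeq \theta_0(0) + \int_0^x v\diff y \in W^{1,2}((0,1))$. Using the $W^{1,2}$ chain rule and the identity $\tau_1\tau_1' + \tau_2\tau_2' = 0$ (which comes from $\abs{\tau}^2 \equiv 1$), a direct computation shows that $(\cos\tilde\theta, \sin\tilde\theta)$ and $\tau$ both satisfy the same first-order linear system
\begin{align}
u_1' = -v\, u_2, \qquad u_2' = v\, u_1,
\end{align}
with identical initial value at $x=0$; for instance $-v\,\tau_2 = -\tau_1\tau_2\tau_2' + \tau_2^2\tau_1' = \tau_1^2 \tau_1' + \tau_2^2\tau_1' = \tau_1'$. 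Uniqueness for this Carathéodory ODE forces $(\cos\tilde\theta,\sin\tilde\theta) = \tau$, so $\tilde\theta$ is another continuous lift of $\tau$. By the uniqueness statement from the first step, $\tilde\theta - \theta_0 \in 2\pi \Z$, hence $\theta_0 \in W^{1,2}$ with $\theta_0' = v$. Finally, the last sentence of the lemma follows exactly as in the continuous uniqueness argument, using that $W^{1,2}((0,1))\hookrightarrow C^0([0,1])$.

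The main obstacle is the regularity upgrade, since a priori the continuous lift is only continuous; the trick is to guess the correct formula for $\theta_0'$ from the unit-length constraint and then verify it via the ODE comparison above. An alternative route, which also works, is to glue the local lifts from Step 1 using the fact that each local branch of $p^{-1}$ is smooth on an open semicircle and Sobolev functions are stable under composition with smooth maps.
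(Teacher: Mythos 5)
Your proof is correct, and its first step (continuous lift via covering space theory, uniqueness up to $2\pi\Z$ by the monodromy argument, transferred to $W^{1,2}$ lifts via the embedding $W^{1,2}((0,1))\hookrightarrow C^0([0,1])$) is exactly what the paper does by citing the smooth-curve argument in B\"ar. The difference is in the regularity upgrade: the paper argues locally, writing $\theta(x)=\arctan\bigl(\gamma_2'(x)/\gamma_1'(x)\bigr)+2\pi\ell$ on intervals where, say, $\gamma_1'>0$, and concluding $\theta\in W^{1,2}$ from the chain rule for Sobolev functions --- which is precisely the alternative route you sketch in your closing remark. Your main argument instead produces the global formula $\theta'=\tau_1\tau_2'-\tau_2\tau_1'\in L^2$ and verifies it by showing that $(\cos\tilde\theta,\sin\tilde\theta)$ and $\tau$ are absolutely continuous solutions of the same linear Carath\'eodory system $u_1'=-v\,u_2$, $u_2'=v\,u_1$ with the same initial datum, so that Gronwall-type uniqueness identifies them and the uniqueness of continuous lifts then gives $\theta_0=\tilde\theta-2\pi k\in W^{1,2}$. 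What your route buys is an explicit, patching-free expression for the weak derivative of the angle function --- in effect the identity $\kappa\abs{\gamma'}=\theta'$ that the paper derives separately in \eqref{eq:theta_kappa} --- at the cost of invoking uniqueness for ODE systems with merely integrable coefficients; the paper's local $\arctan$ representation is more elementary pointwise but needs the case distinction over half-planes and a short gluing step. Both arguments are complete, and your justification of $\tau\in W^{1,2}$ (via $\abs{\gamma'}$ bounded away from zero and the product/chain rules) and of the relation $\tau_1\tau_1'+\tau_2\tau_2'=0$ from $\abs{\tau}\equiv 1$ covers the only points where care is needed.
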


\begin{proof}
	The proof works exactly as in the case of smooth curves, see for instance \cite[Lemma 2.2.5]{Baer}. For the regularity of $\theta$, we use local representations of $\theta$. For instance, in the case $\gamma_1^{\prime}(x)>0$, one has locally
\begin{align}\label{eq:liftingArctan}
	\theta(x)\defeq \arctan\left(\frac{\gamma^{\prime}_2(x)}{\gamma^{\prime}_1(x)}\right)+ 2\pi \ell,\quad \text{for some } \ell\in \Z.
\end{align}
	Hence $\theta\in W^{1,2}(\S^1;\R^2)$ follows from \eqref{eq:liftingArctan} and the chain rule for Sobolev functions.
\end{proof}

\begin{defi}[Winding Number]\label{def:winding}
	Let $\gamma\in W^{2,2}(\S^1;\R^2)$ be an immersion with corresponding  angle function $\theta\in W^{1,2}((0,1);\R)$. We define the \emph{winding number of $\gamma$} as $T[\gamma]\defeq \frac{1}{2\pi} (\theta(1)-\theta(0))$. Note that $T[\gamma]$ does not depend on the choice of $\theta$ and is always an integer.
\end{defi}

\begin{prop}\label{thm:WindinIntegral}
	Let $\gamma\in W^{2,2}(\S^1;\R^2)$ be an immersed curve. Then 
	\begin{align}
		T[\gamma] = \frac{1}{2\pi} \int_{\S^1}\kappa \diff s.
	\end{align}
\end{prop}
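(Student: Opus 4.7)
The plan is to establish the pointwise (a.e.) identity $\kappa\abs{\gamma'} = \theta'$ and then integrate. This essentially reduces the statement to the fundamental theorem of calculus applied to the angle function $\theta$ from \Cref{lem:liftingRegularity}, combined with the definition of $T[\gamma]$.

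First I would start from the relation
\begin{equation}
\gamma'(x) = \abs{\gamma'(x)} \bigl(\cos\theta(x), \sin\theta(x)\bigr), \quad \text{a.e. } x \in (0,1),
\end{equation}
which follows directly from \Cref{lem:liftingRegularity}. Since $\gamma \in W^{2,2}$ is immersed, the function $\abs{\gamma'}$ lies in $W^{1,2}((0,1))$ and is bounded away from zero, while $\theta \in W^{1,2}((0,1))$ by \Cref{lem:liftingRegularity}. Hence we may differentiate the above product a.e. to obtain
\begin{equation}
\gamma''(x) = \tfrac{\diff}{\diff x}\abs{\gamma'(x)}\cdot\bigl(\cos\theta(x), \sin\theta(x)\bigr) + \abs{\gamma'(x)}\,\theta'(x)\bigl(-\sin\theta(x), \cos\theta(x)\bigr).
\end{equation}

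Next I would compute $\det(\gamma', \gamma'')$: the contribution from the first summand of $\gamma''$ vanishes since it is parallel to $\gamma'$, so only the second summand contributes, yielding $\det(\gamma', \gamma'') = \abs{\gamma'}^2\,\theta'\cdot \det\bigl((\cos\theta,\sin\theta),(-\sin\theta,\cos\theta)\bigr) = \abs{\gamma'}^2\,\theta'$. Dividing by $\abs{\gamma'}^3$ gives
\begin{equation}\label{eq:kappa_theta_proofplan}
\kappa(x) = \frac{\theta'(x)}{\abs{\gamma'(x)}}, \quad \text{equivalently } \kappa\abs{\gamma'} = \theta' \text{ a.e.}
\end{equation}

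Finally, using $\diff s = \abs{\gamma'}\diff x$ and integrating \eqref{eq:kappa_theta_proofplan} over $\S^1 \cong [0,1]/{\sim}$ gives
\begin{equation}
\int_{\S^1}\kappa\diff s = \int_0^1 \kappa(x)\abs{\gamma'(x)}\diff x = \int_0^1 \theta'(x)\diff x = \theta(1) - \theta(0) = 2\pi T[\gamma],
\end{equation}
where the last equality is just \Cref{def:winding}. Dividing by $2\pi$ yields the claim. The only mildly subtle point is the justification of the product rule at the level of $W^{1,2}$ functions and the absolute continuity needed for the fundamental theorem of calculus; both are standard because $W^{1,2}((0,1)) \hookrightarrow C^0([0,1])$ and $W^{1,2}$ is a Banach algebra on bounded intervals after multiplying by smooth cutoffs, so no real obstacle arises.
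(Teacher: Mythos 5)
Your proposal is correct and follows essentially the same route as the paper: both proofs derive the a.e.\ identity $\kappa\abs{\gamma'}=\theta'$ by differentiating the lifting relation from \Cref{lem:liftingRegularity} (the paper reads it off from $\partial_s^2\gamma=\kappa\vec{n}$, you via the determinant formula for $\kappa$ and the product rule, which is the same computation) and then conclude with the fundamental theorem of calculus for $W^{1,2}$-functions and \Cref{def:winding}. No gaps; the Sobolev-level justifications you flag are exactly the ones the paper invokes.
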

\begin{proof}
	Let $\theta\in W^{1,2}((0,1);\R)$ be an angle function for $\gamma$. Differentiating \eqref{eq:lifting} and using the chain rule for Sobolev functions and the definition of the unit normal, we obtain
	\begin{align}
	\kappa \vec{n} = \vKap = \partial_s^2 \gamma = \frac{\theta^{\prime}}{\abs{\gamma^{\prime}}} \begin{pmatrix}
	-\sin \theta \\
	\cos\theta
	\end{pmatrix}
	=   \frac{\theta^{\prime}}{\abs{\gamma^{\prime}}} \begin{pmatrix}
	-\gamma^{\prime}_2 \\ \gamma^{\prime}_1
	\end{pmatrix}
	= \frac{\theta^{\prime}}{\abs{\gamma^{\prime}}} \vec{n},
	\end{align}
	consequently 
	\begin{align}\label{eq:theta_kappa}
		\kappa\abs{\gamma^{\prime}} = \theta^{\prime} \text{ almost everywhere.}
	\end{align}
	Moreover, by the fundamental theorem of calculus for $W^{1,2}$-functions, we find
	\begin{align}
		T[\gamma] = \frac{1}{2\pi}(\theta(1)-\theta(0)) = \frac{1}{2\pi} \int_{0}^{1} \theta^{\prime}\diff x = \frac{1}{2\pi}\int_{0}^1 \kappa \abs{\gamma^{\prime}}\diff x = \frac{1}{2\pi}\int_{\S^1}\kappa \diff s. &\qedhere
	\end{align}
\end{proof}

\begin{thm}[Hopf's Umlaufsatz for $W^{2,2}$-embeddings]\label{thm:HopfW22}
	Let $\gamma\in W^{2,2}(\S^1;\R^2)$ be an embedding. Then $T[\gamma]=\pm 1$.
\end{thm}
\begin{proof}
	Let $\gamma^{(n)}$ be a sequence of smooth curves with $\gamma^{(n)} \to \gamma$ in $W^{2,2}(\S^1;\R^2)$. By \Cref{thm:WindinIntegral}, we can easily see that $T[\gamma^{(n)}]\to T[\gamma]$. Since the set of embeddings is open in $C^{1}(\S^1;\R^2)$ by \Cref{lem:embeddingsopen}, we see that $\gamma^{(n)}$ is an embedding for $n\geq N$ large enough, hence $T[\gamma^{(n)}] =\pm 1$ for all $n\geq N$ by Hopf's Umlaufsatz for smooth curves. However, since the sequence $\left(T[\gamma^{(n)}]\right)_{n\in\N}$ converges, it has to be eventually constant, say $T[\gamma^{(n)}] = \tau \in \{-1, 1\}$ for all $n\geq N$. But then $T[\gamma] = \lim_{n\to\infty} T[\gamma^{(n)}] = \tau \in \{-1,1\}$.
\end{proof}

\begin{lem}\label{lem:reparaConstSpeed}
	Let $\gamma\in W^{2,2}(\S^1;\R^2)$ be an immersion. Then, there exists a constant speed reparametrization $\tilde{\gamma}$ of $\gamma$ such that $\tilde{\gamma}\in W^{2,2}(\S^1;\R^2)$.
\end{lem}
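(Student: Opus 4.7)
The plan is to construct the reparametrization explicitly via the arc-length function and verify $W^{2,2}$-regularity using the chain rule in Sobolev spaces. Set $L \defeq \CalL(\gamma)>0$ and define the arc-length function
\begin{equation}
s\colon [0,1] \to [0,L], \qquad s(x) \defeq \int_0^x \abs{\gamma'(y)}\diff y.
\end{equation}
Since $\gamma \in W^{2,2}((0,1);\R^2) \hookrightarrow C^1([0,1];\R^2)$ and $\gamma$ is an immersion, the function $\abs{\gamma'}$ is continuous and bounded below by some $c>0$ and above by some $C>0$ on the compact interval $[0,1]$. In particular $s$ is a $C^1$-diffeomorphism from $[0,1]$ to $[0,L]$. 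I would first verify $s \in W^{2,2}((0,1))$: one has $s'=\abs{\gamma'}\in C^0$, and the weak derivative
\begin{equation}
s''(x) = \frac{\langle \gamma'(x), \gamma''(x)\rangle}{\abs{\gamma'(x)}}
\end{equation}
lies in $L^2$ because $\gamma'' \in L^2$, $\gamma'$ is bounded, and $\abs{\gamma'}\geq c>0$.

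Next I would set $\phi \defeq s^{-1}(L\,\cdot\,)\colon [0,1]\to[0,1]$, so that $\tilde{\gamma}(y)\defeq \gamma(\phi(y))$ has $\abs{\tilde{\gamma}'(y)} = L$ for every $y \in [0,1]$. From $\phi'(y) = L/\abs{\gamma'(\phi(y))}$ one reads off that $\phi' \in C^0$ and $c/L \leq 1/\phi' \leq C/L$. Differentiating once more yields
\begin{equation}
\phi''(y) = -\frac{L^2\,\langle \gamma'(\phi(y)), \gamma''(\phi(y))\rangle}{\abs{\gamma'(\phi(y))}^4},
\end{equation}
and the substitution $x=\phi(y)$ together with the two-sided bound on $\phi'$ shows
\begin{equation}
\int_0^1 \abs{\phi''(y)}^2 \diff y \leq C_1 \int_0^1 \abs{\gamma''(x)}^2 \diff x <\infty,
\end{equation}
so $\phi \in W^{2,2}((0,1))$.

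Then I would apply the chain rule for Sobolev functions (valid here because $\phi$ is a $W^{2,2}$-diffeomorphism and $\gamma \in W^{2,2}$) to deduce
\begin{equation}
\tilde{\gamma}''(y) = \gamma''(\phi(y))\,(\phi'(y))^2 + \gamma'(\phi(y))\,\phi''(y),
\end{equation}
and once more use the uniform bounds on $\phi'$ together with the change of variables formula to conclude $\tilde{\gamma}'' \in L^2((0,1);\R^2)$. Finally, one has to check the periodicity conditions from the definition of $W^{2,2}(\S^1;\R^2)$: because $\phi(0)=0$, $\phi(1)=1$, and $\phi'(0)=\phi'(1)=L/\abs{\gamma'(0)}$ (using $\gamma'(0)=\gamma'(1)$), one obtains $\tilde{\gamma}(0)=\tilde{\gamma}(1)$ and $\tilde{\gamma}'(0)=\tilde{\gamma}'(1)$, so $\tilde{\gamma} \in W^{2,2}(\S^1;\R^2)$. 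The only mildly delicate step is the justification of the chain rule at the $W^{2,2}$-level, which however is standard thanks to the two-sided bound $0<c\leq\abs{\gamma'}\leq C$.
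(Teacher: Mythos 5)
Your argument is correct and is essentially the paper's proof written out in full: the paper simply cites the standard arc-length reparametrization argument (Bär, Proposition 2.1.13) together with the Sobolev embedding $W^{2,2}\hookrightarrow C^1$ and the chain rule for Sobolev functions, which is exactly what you carry out explicitly via the two-sided bound $0<c\leq\abs{\gamma'}\leq C$, the regularity of $\phi$, and the closing conditions.
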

\begin{proof}
	Follows with the arguments in \cite[Proposition 2.1.13]{Baer}, using the Sobolev embedding $W^{2,2}(\S^1;\R^2)\hookrightarrow C^1(\S^1;\R^2)$ and the chain rule for Sobolev functions.
\end{proof}

\section{Jacobi elliptic functions and Euler's elastica}\label{appendix:Jacobi}
\subsection{Elliptic functions}
We provide some elementary properties of Jacobi 
 elliptic functions, which 
can be found for example in \cite[Chapter 16]{Abramowitz}.
\begin{defi}[Amplitude Function, Complete Elliptic Integrals]
Fix $m \in [0,1) $. We define the \emph{Jacobi-amplitude function} $ \am(\,\cdot\,,m) \colon \mathbb{R} \rightarrow \mathbb{R} $ with \emph{modulus} {$m$} 
 to be the inverse function of 
\begin{equation}
\mathbb{R} \ni z \mapsto \int_0^z \frac{1}{\sqrt{1- m\sin^2(\theta)}} \diff\theta \in \mathbb{R}
\end{equation}
We define the \emph{complete elliptic integral of first} and \emph{second kind} by
\begin{align}
K(m) &\defeq \int_0^\frac{\pi}{2} \frac{1}{\sqrt{1- m \sin^2(\theta)}} \diff\theta,& E(m) &\defeq \int_0^\frac{\pi}{2} \sqrt{1- m \sin^2(\theta)} \diff \theta 
\end{align}
and the \emph{incomplete elliptic integral of first} and \emph{second kind} by
\begin{align}
F(x,m) &\defeq \int_0^x \frac{1}{\sqrt{1- m \sin^2(\theta)}} \diff\theta,& E(x, m) &\defeq \int_0^x \sqrt{1- m \sin^2(\theta)} \diff \theta .
\end{align}
Note that $F(\cdot,m) = \am(\cdot,m)^{-1}$. 
\end{defi}

\begin{defi}[Elliptic Functions]\label{def:B2}
For $m\in [0,1)$ the \emph{Jacobi elliptic functions} are given by
\begin{align}
\cn(\cdot,m)\colon \mathbb{R} \rightarrow \mathbb{R}, \;\; &\cn(x,m) \defeq \cos(\am(x,m)), \\ \sn(\cdot,m)\colon \mathbb{R} \rightarrow \mathbb{R}, \;\; &\sn(x,m) \defeq \sin(\am(x,m)), \\ \dn(\cdot,m)\colon \mathbb{R} \rightarrow \mathbb{R}, \;\; &\dn(x,m) \defeq \sqrt{1-m\sin^2(\am(x,m))}.
\end{align}
\end{defi}
The following proposition summarizes all relevant properties and identities for the elliptic functions. They can all be found in \cite[Chapter 16]{Abramowitz}.

\begin{prop}\label{prop:identities}
\leavevmode\begin{enumerate}
\item (Derivatives and Integrals of Jacobi Elliptic Functions)  For each $x \in \mathbb{R}$ and $m \in (0,1)$ we have
\begin{align}
\frac{\partial}{\partial x} \cn(x,m) & = - \sn(x,m) \dn(x,m), &
\frac{\partial}{\partial x} \sn(x,m) & =  \cn(x,m) \dn(x,m) , \\
\frac{\partial}{\partial x} \dn(x,m) & = - m \cn(x,m) \sn(x,m), &
\frac{\partial}{\partial x} \am(x,m) & = \dn(x,m).
\end{align}
\item (Derivatives of Complete Elliptic Integrals) For $m \in (0,1)$ $E,K$ are smooth and 
\begin{align}
\frac{\diff}{\diff m}E(m)  &= \frac{E(m) - K(m)}{2m}, 
& \frac{\diff}{\diff m}K(m)   = \frac{(m-1) K(m) +E(m) }{2 m(1-m)}. 
\end{align}
\item (Trigonometric Identities) For each $m \in [0,1)$ and $x \in \mathbb{R}$ the Jacobi elliptic functions satisfy
\begin{align} 
  \cn^2(x,m) + \sn^2(x,m) & = 1, &  \dn^2(x,m) + m \sn^2(x,m) &= 1 .
  \end{align}
%
\item (Periodicity) All periods of the elliptic functions are given as follows, where $l \in \Z$ and $x \in \R$:
\begin{align} 
\am(lK(m),m) & = l \frac{\pi}{2}, & \nonumber
\cn(x+ 4 l K(m), m ) &  = \cn(x,m)  ,\\
\sn(x+ 4 l K(m), m ) &  = \sn(x,m) , &
\dn(x+ 2 l K(m), m ) &  = \dn(x,m) ,\label{eq:PeriodDN}\\
F(  \tfrac{l\pi}{2} , m )& = lK(m)   & E(  \tfrac{l\pi}{2} , m )& = lE(m) \nonumber
\end{align}
\begin{equation}
 \am(x+ 2lK(m),m)  =  l \pi + \am(x,m),
\end{equation}
\begin{equation}
F(x + l \pi, m) = F(x,m) + 2lK(m) , 
\end{equation}
\begin{equation}
E(x + l\pi , m ) = E(x,m) + 2lE(m). 
\end{equation}
\item (Asymptotics of the Complete Elliptic Integrals)
\begin{equation}
\lim_{m \rightarrow 1 } K (m) = \infty, \quad \quad \quad \quad \lim_{m \rightarrow 0 } K(m) = \frac{\pi}{2}
\end{equation}
\begin{equation}
\lim_{m \rightarrow 1 } E (m) = 1, \; \; \quad \quad \quad \quad \lim_{m \rightarrow 0 } E(m) = \frac{\pi}{2}.
\end{equation}
 \end{enumerate} 
\end{prop}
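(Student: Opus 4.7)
The plan is to derive all identities directly from the definitions of $\am(\cdot,m)$, $F(\cdot,m)$, $E(\cdot,m)$, $K(m)$, $E(m)$ and the Jacobi elliptic functions in \Cref{def:B2}. Since the author already notes that everything is contained in \cite[Chapter 16]{Abramowitz}, the proof is an organized compilation rather than a new argument, so the main task is to identify the right order of derivations so that later identities follow cheaply from earlier ones.

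First, I would treat part (1). By definition, $F(\cdot,m)$ is a strictly increasing smooth diffeomorphism of $\R$ (its integrand is positive and bounded away from zero on bounded intervals) and $\am(\cdot,m)=F(\cdot,m)^{-1}$. The inverse function theorem, together with $\partial_z F(z,m) = (1 - m\sin^2 z)^{-1/2}$, immediately gives
\begin{equation}
\partial_x \am(x,m) = \sqrt{1 - m\sin^2(\am(x,m))} = \dn(x,m).
\end{equation}
The chain rule applied to $\cn=\cos\circ\am$ and $\sn=\sin\circ\am$ yields the first two identities, and differentiating $\dn = \sqrt{1 - m\,\sn^2}$ together with these gives the last. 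Part (3) follows along the way: $\cn^2+\sn^2=1$ is the Pythagorean identity at $\am$, and $\dn^2 + m\sn^2 = 1$ is the definition of $\dn$ squared.

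Next I would handle part (4), the periodicity. The key observation is that the integrand $(1-m\sin^2\theta)^{-1/2}$ is $\pi$-periodic and even, so direct integration gives $F(x+\pi,m)=F(x,m)+2K(m)$ and iteratively $F(x+l\pi,m)=F(x,m)+2lK(m)$, hence $F(l\pi/2,m)=lK(m)$. Inverting this via $\am=F^{-1}$ yields $\am(x+2lK(m),m)=\am(x,m)+l\pi$ and $\am(lK(m),m)=l\pi/2$. All the $4lK(m)$-periodicity statements for $\cn,\sn$ and the $2lK(m)$-periodicity of $\dn$ (which uses the additional fact that $\sin^2$ is already $\pi$-periodic) then follow by composition with $\cos,\sin,\sqrt{1-m\sin^2(\cdot)}$. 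The analogous identities for $E(\cdot,m)$ reduce to the $\pi$-periodicity and evenness of its integrand $\sqrt{1-m\sin^2\theta}$.

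For part (2), I would differentiate under the integral sign: $\partial_m K = \tfrac{1}{2}\int_0^{\pi/2}\sin^2\theta\,(1-m\sin^2\theta)^{-3/2}\diff\theta$ and $\partial_m E = -\tfrac{1}{2}\int_0^{\pi/2}\sin^2\theta\,(1-m\sin^2\theta)^{-1/2}\diff\theta$. The expression $2m\,\partial_m E = E - K$ follows after writing $m\sin^2\theta = 1 - (1-m\sin^2\theta)$ inside the integral. The formula for $\partial_m K$ is standard but less immediate: I would integrate by parts in
\begin{equation}
\int_0^{\pi/2}\frac{\sin^2\theta\,\cos^2\theta}{(1-m\sin^2\theta)^{3/2}}\diff\theta,
\end{equation}
using $\frac{\diff}{\diff\theta}\bigl(\sin\theta\cos\theta(1-m\sin^2\theta)^{-1/2}\bigr)$, and then combine the resulting relation with the Pythagorean identity $\cos^2\theta=1-\sin^2\theta$ to isolate $\partial_m K$ in terms of $K$ and $E$. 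The main technical obstacle is keeping this integration by parts organized; beyond that the identities are algebraic manipulations. Finally, part (5) is elementary: at $m=0$ both integrands reduce to $1$, giving $\pi/2$, while at $m\nearrow 1$ dominated convergence yields $E(1)=\int_0^{\pi/2}|\cos\theta|\diff\theta=1$, and $K(m)\to\infty$ follows from the non-integrable singularity of $(1-\sin^2\theta)^{-1/2} = |\cos\theta|^{-1}$ at $\theta=\pi/2$, detected by splitting off a neighborhood of $\pi/2$ and using the monotone convergence theorem.
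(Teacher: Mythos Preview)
Your derivation is correct and well organized. The paper, however, does not give a proof of this proposition at all: it simply records these standard identities and refers the reader to \cite[Chapter 16]{Abramowitz}. So there is nothing to compare against beyond noting that you supply an actual argument where the paper only cites a reference; your ordering (first $\partial_x\am=\dn$ via the inverse function theorem, then the chain rule for $\cn,\sn,\dn$, then periodicity from the $\pi$-periodicity and evenness of the integrands, then differentiation under the integral sign plus one integration by parts for $K'(m)$, and finally the elementary limits) is the standard route and would be accepted without issue.
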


\subsection{Some computational lemmas involving elliptic functions}
We will also need some more advanced identities for elliptic functions.
\begin{lem} \label{lem:uniqueroot}
The map $(0,1) \ni m \mapsto 2 E(m) - K(m)$  has a unique zero $m^* \in (0,1)$. Moreover $m^* > \frac{1}{2}$.
\end{lem}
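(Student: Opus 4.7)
The plan is to use standard calculus arguments: show the function $f(m)\defeq 2E(m)-K(m)$ is strictly monotonically decreasing on $(0,1)$ with $f(0)>0$ and $\lim_{m\to 1^-}f(m)=-\infty$, which gives both existence and uniqueness of the zero $m^*$; then for the bound $m^*>\tfrac{1}{2}$, just evaluate $f(\tfrac{1}{2})$ explicitly.

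For the boundary values, one reads off $f(0)=2\cdot\tfrac{\pi}{2}-\tfrac{\pi}{2}=\tfrac{\pi}{2}>0$ and $\lim_{m\to 1^-}f(m)=2\cdot 1-\infty=-\infty$ directly from the asymptotics listed in Proposition \ref{prop:identities}(5). For monotonicity I would use the derivative formulas in Proposition \ref{prop:identities}(2) to compute
\begin{equation}
f'(m)=2\cdot\frac{E(m)-K(m)}{2m}-\frac{(m-1)K(m)+E(m)}{2m(1-m)}=\frac{(1-2m)E(m)-(1-m)K(m)}{2m(1-m)}.
\end{equation}
The claim is that the numerator is negative for every $m\in(0,1)$. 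For $m\geq\tfrac{1}{2}$ this is obvious since $(1-2m)E(m)\leq 0$ while $(1-m)K(m)>0$. For $m<\tfrac{1}{2}$ one uses the pointwise inequality $\sqrt{1-m\sin^{2}\theta}\leq \frac{1}{\sqrt{1-m\sin^{2}\theta}}$, which yields $E(m)<K(m)$ for $m\in(0,1)$; combined with $0<1-2m<1-m$, this gives $(1-2m)E(m)<(1-2m)K(m)<(1-m)K(m)$, so the numerator is negative as claimed. Hence $f$ is strictly decreasing on $(0,1)$, and combined with the boundary behavior this yields the existence of a unique root $m^{*}\in(0,1)$.

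For the refinement $m^{*}>\tfrac{1}{2}$, since $f$ is strictly decreasing it suffices to verify $f(\tfrac{1}{2})>0$. Writing the integrands over a common denominator,
\begin{equation}
f(\tfrac{1}{2})=\int_{0}^{\pi/2}\frac{2(1-\tfrac{1}{2}\sin^{2}\theta)-1}{\sqrt{1-\tfrac{1}{2}\sin^{2}\theta}}\diff\theta=\int_{0}^{\pi/2}\frac{\cos^{2}\theta}{\sqrt{1-\tfrac{1}{2}\sin^{2}\theta}}\diff\theta>0,
\end{equation}
which gives the bound. I expect no real obstacle; the only subtlety is the case split $m\lessgtr\tfrac{1}{2}$ in the sign analysis of the numerator of $f'$, since the factor $(1-2m)$ changes sign at $m=\tfrac{1}{2}$ and one must handle both sides of this threshold separately.
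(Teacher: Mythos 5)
Your proof is correct, and it differs from the paper's argument in the monotonicity step. You differentiate $2E(m)-K(m)$ directly and settle the sign of $f'(m)=\frac{(1-2m)E(m)-(1-m)K(m)}{2m(1-m)}$ by an elementary comparison: the case $m\geq\frac12$ is immediate, and for $m<\frac12$ you use the pointwise bound giving $E(m)<K(m)$ together with $1-2m<1-m$. The paper instead normalizes by $K$ and studies $\frac{2E(m)}{K(m)}-1$, showing via the completing-the-square identity \eqref{eq:derEK} that $\frac{\diff}{\diff m}\frac{E(m)}{K(m)}\leq-\frac12$, with boundary limits $1$ and $-1$ rather than your $\frac{\pi}{2}$ and $-\infty$. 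Your route is shorter and avoids the algebraic manipulation, at the cost of the case split at $m=\frac12$; the paper's route is heavier here but produces the quantitative estimate on $\frac{\diff}{\diff m}\frac{E}{K}$ and the identity \eqref{eq:derEK}, which are reused in the proof of Lemma \ref{lem:4.5}, so the extra work pays off later. The verification of $m^*>\frac12$ via $2E(\frac12)-K(\frac12)=\int_0^{\pi/2}\frac{\cos^2\theta}{\sqrt{1-\frac12\sin^2\theta}}\diff\theta>0$ is identical to the paper's. (Minor point: since the domain is $(0,1)$, your ``$f(0)=\frac{\pi}{2}$'' should be read as $\lim_{m\to0^+}f(m)=\frac{\pi}{2}$, after which continuity, the intermediate value theorem and strict monotonicity give existence and uniqueness exactly as you say.)
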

\begin{proof}
We define for $m \in (0,1)$,  $f(m) \defeq \frac{2 E(m)}{K(m)} -1$. Note that  $f$ has the same zeroes as {$m\mapsto 2E(m)-K(m)$}. 
By Proposition \ref{prop:identities} one has 
\begin{equation}
\lim_{m \rightarrow 0 } f(m) = 1, \quad \lim_{m \rightarrow 1} f(m) = -1.
\end{equation}
and hence there has to exist  a zero of $f$. To show that it is unique, we show that $f$ is strictly decreasing, which follows immediately from the following computation
\begin{align}
\frac{\diff}{\diff m} \frac{E(m)}{K(m)} & = \frac{1}{K(m)^2} \left( \frac{E(m)- K(m)}{2m} K(m) - E(m) \frac{(m-1)K(m) + E(m)}{2 m (1-m) } \right) \nonumber
\\ &=  \frac{1}{2m  (1-m ) K(m)^2} \left( 2(1-m)  E(m)K(m) - (1-m) K(m)^2 - E(m)^2 \right) \nonumber \\ 
 & =    \frac{1}{2m (1-m) K(m)^2} \left( 2 E(m) (1-m)K(m) -(1-m)^2 K(m)^2 - E(m)^2 \right) \nonumber  \\  
 &  \quad + \frac{1}{2m (1-m) K(m)^2} \left( ((1-m)^2 - (1-m)) K(m)^2 \right) \nonumber
\\ & = \label{eq:derEK}  \frac{1}{2m (1-m) K(m)^2} \left( - (E(m) - (1-m) K(m))^2 - m(1-m) K(m)^2 \right) \\ & \leq -\frac{1}{2}.  \nonumber 
\end{align}
It remains to show that  $m^* > \frac{1}{2}$. Indeed 
\begin{align}
2 E( \tfrac{1}{2} ) - K( \tfrac{1}{2} )& = \int_0^{\frac{\pi}{2}}  \left( 2 \sqrt{1- \frac{1}{2} \sin^2( \theta)} - \frac{1}{\sqrt{1- \frac{1}{2} \sin^2(\theta)}}  \right) \; \mathrm{d}\theta
\\ & = \int_0^{\frac{\pi}{2}} \frac{\cos^2(\theta)}{\sqrt{1- \frac{1}{2}\sin^2(\theta)}} \; \mathrm{d} \theta  > 0,
 \end{align}
 which implies that $f(\tfrac{1}{2} ) > 0$ and hence by monotonicity of $f$ we find $m^* > \tfrac{1}{2}$. 
\end{proof}
\begin{lem}\label{lem:4.5}
The expression $2E(m) - K(m) + m K(m)$ is strictly positive for all $m \in (0,1)$.
\end{lem}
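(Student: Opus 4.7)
The plan is to reduce this to a direct computation using the integral representations of $E(m)$ and $K(m)$. Writing both complete elliptic integrals as integrals over $[0,\pi/2]$, we combine the three terms into a single integral
\begin{equation}
2E(m)-K(m)+mK(m) = \int_0^{\pi/2} \frac{2(1-m\sin^2\theta) + (m-1)}{\sqrt{1-m\sin^2\theta}}\, \mathrm{d}\theta.
\end{equation}
The numerator simplifies to $1+m-2m\sin^2\theta$, and the key algebraic trick is to split it as
\begin{equation}
1+m-2m\sin^2\theta = (1-m\sin^2\theta) + m(1-\sin^2\theta) = (1-m\sin^2\theta) + m\cos^2\theta.
\end{equation}

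Substituting this decomposition back, the first piece produces exactly $E(m)$ and the second piece gives a positive remainder, yielding the identity
\begin{equation}
2E(m)-K(m)+mK(m) = E(m) + m\int_0^{\pi/2}\frac{\cos^2\theta}{\sqrt{1-m\sin^2\theta}}\,\mathrm{d}\theta.
\end{equation}
For $m\in(0,1)$ both summands are strictly positive (the integrand of the remaining integral is continuous, nonnegative, and positive on $[0,\pi/2)$), so the strict positivity of the whole expression is immediate.

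There is essentially no obstacle here; the only nontrivial observation is the algebraic decomposition of the numerator, which turns a sign-indefinite expression into a sum of two clearly nonnegative quantities. An alternative would be to show that the derivative of $g(m)\defeq 2E(m)-K(m)+mK(m)$ is positive using the formulas in Proposition \ref{prop:identities} and to combine this with the boundary value $g(0)=\pi/2>0$, but the direct integral argument is cleaner and avoids any further computation with the derivative identities.
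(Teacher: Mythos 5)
Your argument is correct: the identity
\begin{equation}
2E(m)-K(m)+mK(m)=\int_0^{\pi/2}\frac{1+m-2m\sin^2\theta}{\sqrt{1-m\sin^2\theta}}\,\diff\theta
=E(m)+m\int_0^{\pi/2}\frac{\cos^2\theta}{\sqrt{1-m\sin^2\theta}}\,\diff\theta
\end{equation}
checks out, and both summands are strictly positive for $m\in(0,1)$, so the lemma follows. This is a genuinely different route from the paper's. The paper considers $f(m)=\frac{2E(m)}{K(m)}-1+m$, shows $\lim_{m\to 1}f(m)=0$, and proves $f'<0$ by upgrading the earlier estimate $\frac{\diff}{\diff m}\frac{E(m)}{K(m)}\leq-\frac{1}{2}$ from the proof of Lemma \ref{lem:uniqueroot} to a strict inequality; that step in turn requires showing $E(m)-(1-m)K(m)>0$ on $(0,1)$, which the paper does by yet another derivative-and-boundary-value argument. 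Note that your decomposition is exactly the identity $2E(m)-K(m)+mK(m)=E(m)+\bigl(E(m)-(1-m)K(m)\bigr)$, so the quantity the paper establishes as positive via monotonicity is the one you obtain pointwise from the nonnegative integrand $m\cos^2\theta/\sqrt{1-m\sin^2\theta}$. Your proof is more elementary and self-contained (no derivative formulas for $E$ and $K$, no limits as $m\to1$), while the paper's approach buys the reuse of the computation \eqref{eq:derEK} already carried out for Lemma \ref{lem:uniqueroot} and records the strict bound $\frac{\diff}{\diff m}\frac{E(m)}{K(m)}<-\frac{1}{2}$ along the way. Your sketched alternative via $g(m)=2E(m)-K(m)+mK(m)$, $g(0^+)=\frac{\pi}{2}$ and $g'>0$ also works (one computes $g'(m)=\frac{E(m)-(1-m)K(m)}{2m}$), but it needs the same positivity fact, so the direct integral argument is indeed the cleaner option.
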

\begin{proof}
Let $f(m) \defeq \frac{2 E(m)}{K(m)} - 1 + m$. Note note that $f(m)$ is positive if and only if the expression in the statement is positive and $K(m) > 0$. Further note that 
\begin{equation}
\lim_{m \rightarrow 1 } f(m) = 0 .
\end{equation}
To show the claim it suffices to prove that $f' < 0$. To do so, it suffices to show that $\frac{\diff}{\diff m} \frac{E(m)}{K(m)} < - \frac{1}{2}$ for all $m \in (0,1)$.
 We have already shown in \eqref{eq:derEK} that
\begin{equation}\label{eq:diif/diff}
\frac{\diff}{\diff m} \frac{E(m)}{K(m) } = \frac{1}{2m (1-m) K(m)^2} \left( - (E(m) - (1-m) K(m))^2 - m(1-m) K(m)^2 \right) \leq - \frac{1}{2},
\end{equation}
where the last inequality was obtained by estimating the square with zero. We will show that this estimate is always with strict inequality, i.e. 
\begin{equation}\label{eq:45}
E(m) - (1-m) K(m)  \neq 0 \quad \forall m \in (0,1). 
\end{equation}
Note again first that 
\begin{equation}
\lim_{m \rightarrow 0 } ( E(m) - (1-m)K(m))  = 0.
\end{equation}
Now an easy computation yields
\begin{equation}
\frac{\diff}{\diff m} (E(m) - (1-m) K(m))   = \frac{1}{2}K(m) > 0.
\end{equation}
Therefore we obtain that 
\begin{equation}
E(m) - (1-m) K(m) > 0 \quad \forall m \in (0,1). 
\end{equation}
Hence \eqref{eq:45} is shown and thus $\frac{d}{dm} \frac{E(m)}{K(m)}< - \frac{1}{2}$ for all $m \in (0,1)$. By definition of $f$ we obtain $f' < 0$. 
\end{proof}

\begin{lem}\label{lem:e-Fzweo}
Let $m^*$ be the unique zero in Lemma \ref{lem:uniqueroot}. Then the map 
\begin{equation}
[0,2\pi) \ni x \mapsto 2E(x,m^*) - F(x,m^*)  
\end{equation}
has exactly four zeroes in $[0,2\pi)$, namely $x_1 = 0, x_2 = \frac{\pi}{2}, x_3 = \pi , x_4 = \frac{3\pi}{2}$. 
\end{lem}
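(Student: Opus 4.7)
The plan is to set $g(x)\defeq 2E(x,m^*)-F(x,m^*)$ and analyze it as follows. First I would observe that by the periodicity identities
\begin{align}
E(x+\pi,m^*) &= E(x,m^*)+2E(m^*), & F(x+\pi,m^*) &= F(x,m^*)+2K(m^*)
\end{align}
from Proposition \ref{prop:identities}, one computes
\begin{equation}
g(x+\pi)=g(x)+2\bigl(2E(m^*)-K(m^*)\bigr)=g(x),
\end{equation}
where the last equality uses the defining property of $m^*$ from \Cref{def:figeight} (and \Cref{lem:uniqueroot}). Hence $g$ is $\pi$-periodic, which reduces the task to showing that $g$ has exactly the two zeros $0$ and $\pi/2$ on $[0,\pi)$.

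The two claimed zeros are immediate: $g(0)=0$ trivially, and $g(\pi/2)=2E(m^*)-K(m^*)=0$ by the definition of $m^*$. To rule out further zeros, I would exploit the explicit derivative
\begin{equation}
g'(x)=2\sqrt{1-m^*\sin^2 x}-\frac{1}{\sqrt{1-m^*\sin^2 x}}=\frac{1-2m^*\sin^2 x}{\sqrt{1-m^*\sin^2 x}}.
\end{equation}
Since $m^*>\tfrac12$ by \Cref{lem:uniqueroot}, we have $\tfrac{1}{2m^*}\in(0,1)$, so $g'(x)=0$ on $[0,\pi]$ precisely at $\alpha\defeq \arcsin(1/\sqrt{2m^*})$ and $\pi-\alpha$, and $\alpha\in(0,\pi/2)$. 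A sign analysis of the numerator $1-2m^*\sin^2 x$ then shows that $g'>0$ on $[0,\alpha)\cup(\pi-\alpha,\pi]$ and $g'<0$ on $(\alpha,\pi-\alpha)$.

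Now I would piece the monotonicity information together. Since $g(0)=0$, strict monotonicity on $[0,\alpha]$ gives $g(\alpha)>0$ and no further zero on $(0,\alpha]$. On $[\alpha,\pi-\alpha]$, $g$ is strictly decreasing; since $\pi/2\in(\alpha,\pi-\alpha)$ and $g(\pi/2)=0$, this interval contains the single zero $\pi/2$, and in addition $g(\pi-\alpha)<0$. Finally, $g$ is strictly increasing on $[\pi-\alpha,\pi]$ from $g(\pi-\alpha)<0$ to $g(\pi)=g(0)=0$ (by $\pi$-periodicity), so the only zero in this last interval is $x=\pi$, which already belongs to the next period. Combined with the $\pi$-periodicity this produces exactly the four zeros $0,\pi/2,\pi,3\pi/2$ in $[0,2\pi)$. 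The only real subtlety is the strict sign of $g(\pi-\alpha)$, and that falls out for free once one notes that $\pi/2$ lies strictly between $\alpha$ and $\pi-\alpha$ and $g$ is strictly monotone there.
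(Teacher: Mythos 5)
Your proposal is correct. It differs from the paper's argument mainly in how the ``no further zeros'' step is closed: you first observe, via the shift identities $E(x+\pi,m)=E(x,m)+2E(m)$ and $F(x+\pi,m)=F(x,m)+2K(m)$ from \Cref{prop:identities} together with $2E(m^*)=K(m^*)$, that $g$ is $\pi$-periodic, reduce to $[0,\pi)$, and then run an explicit sign/monotonicity analysis of $g'$ on the three subintervals determined by $\alpha=\arcsin(1/\sqrt{2m^*})$ and $\pi-\alpha$. The paper instead verifies directly that $f(l\pi/2)=l\bigl(2E(m^*)-K(m^*)\bigr)=0$ for $l=0,\dots,4$ and concludes by a Rolle-type counting argument: a hypothetical fifth zero in $(0,2\pi)$ would force five critical points of $f'$, whereas the same derivative formula and $m^*>\frac12$ (from \Cref{lem:uniqueroot}) show there are only four. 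Both routes rest on identical inputs (the derivative $\frac{1-2m^*\sin^2 x}{\sqrt{1-m^*\sin^2 x}}$, $m^*>\frac12$, and the elliptic-integral identities); your periodicity reduction buys a shorter domain and a completely explicit picture of where $g$ is positive and negative, while the paper's Rolle counting avoids any sign bookkeeping and extends verbatim to situations where one only knows the number of critical points rather than the monotonicity pattern. Either argument is complete; your one genuinely delicate point, the strict inequality $g(\pi-\alpha)<0$, is indeed handled correctly by strict monotonicity on $(\alpha,\pi-\alpha)$ together with $g(\pi/2)=0$ and $\pi/2<\pi-\alpha$.
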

\begin{proof}
Let $f: \mathbb{R} \rightarrow \mathbb{R}$ be the smooth function defined by $f(x)\defeq 2 E(x,m^*)- F(x,m^*)$. We show first that $f(0) = f( \frac{\pi}{2} ) = f( \pi) = f( \frac{3\pi}{2}) = f(2\pi)$. Indeed, by Proposition \ref{prop:identities} one has for all $l \in \mathbb{Z}$
\begin{equation}
f(l \frac{\pi}{2}) = l (2E(m^*) - K(m^*)) = 0 .
\end{equation} 
Next we show that $f'$ has four zeroes in $[0,2\pi]$. Indeed, 
\begin{equation}
f'(x) = \frac{1-2m^* \sin^2(x)}{\sqrt{1-m^* \sin^2(x)}}, 
\end{equation}
which is zero if and only if $\sin^2(x) = \frac{1}{2m^*}$ which happens exactly four times in $[0,2\pi]$ since $m^* > \frac{1}{2}$ by Lemma \ref{lem:uniqueroot}. Assume now that there exists some $x_0 \in (0,2\pi)$ apart from $0, \frac{\pi}{2}, \pi , \frac{3\pi}{2}, 2\pi$ such that $f(x_0) = 0$. We can now sort the set $\{ 0, \frac{\pi}{2}, \pi , \frac{3\pi}{2}, 2\pi ,x_0 \} =  \{ y_1, y_2 , y_3 ,y_4 ,y_5 ,y_6\}$ with $0 = y_1 < ... < y_6 = 2\pi$. Since 
\begin{equation}
f(y_1) = ... = f(y_6) = 0,
\end{equation}
by the mean value theorem for all $i \in \{ 1, ..., 5\}$ there exists some $z_i \in (y_i,y_{i+1})$ such that $f'(z_i) = 0$. This however is a contradiction to the fact that $f'$ has only $4$ zeroes. As a consequence, 
 there exists no $x_0$ as in the assumption. The claim follows.   
\end{proof}

\begin{lem}[{{cf. \cite[Proposition B.5]{MS20}}}] \label{lem:Heumanlambda}
For all $m \in (0,1)$ one has 
\begin{equation}
E(m) \leq \frac{\pi}{2\sqrt{2}} \sqrt{2-m}.
\end{equation}
\end{lem}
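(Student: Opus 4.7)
The plan is to prove the inequality directly via the Cauchy--Schwarz inequality applied to the integral representation of $E(m)$.

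Recall that by definition
\begin{equation}
E(m) = \int_0^{\pi/2} \sqrt{1-m\sin^2\theta}\cdot 1\; \diff\theta.
\end{equation}
I would apply the Cauchy--Schwarz inequality to the two factors $\sqrt{1-m\sin^2\theta}$ and $1$ in $L^2((0,\pi/2))$. This yields
\begin{equation}
E(m) \leq \left(\int_0^{\pi/2} (1-m\sin^2\theta)\, \diff\theta\right)^{1/2}\left(\int_0^{\pi/2} 1\, \diff\theta\right)^{1/2}.
\end{equation}
The first integral is elementary: using $\int_0^{\pi/2}\sin^2\theta\, \diff\theta = \pi/4$, it equals $\pi/2 - m\pi/4 = \pi(2-m)/4$. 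The second is simply $\pi/2$. Combining these gives
\begin{equation}
E(m)\leq \sqrt{\frac{\pi(2-m)}{4}\cdot\frac{\pi}{2}} = \frac{\pi}{2\sqrt{2}}\sqrt{2-m},
\end{equation}
which is the claim. There is no serious obstacle here; the only mild subtlety is recognizing that the trivial factorization of the integrand as $\sqrt{1-m\sin^2\theta}\cdot 1$ is exactly the right one for Cauchy--Schwarz to produce the desired bound. In particular, no sophisticated properties of elliptic functions are needed.
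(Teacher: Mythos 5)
Your argument is correct: Cauchy--Schwarz in $L^2((0,\pi/2))$ applied to the factorization $\sqrt{1-m\sin^2\theta}\cdot 1$ gives exactly $E(m)^2 \leq \frac{\pi(2-m)}{4}\cdot\frac{\pi}{2}$, and the elementary integral $\int_0^{\pi/2}\sin^2\theta\,\diff\theta = \frac{\pi}{4}$ is computed correctly, so the bound $E(m)\leq \frac{\pi}{2\sqrt{2}}\sqrt{2-m}$ follows. This is, however, a genuinely different route from the paper: the paper does not prove the inequality at all, but defers to an external reference (Proposition B.5 of \cite{MS20}), with the only content of its proof being a warning about the notational convention $m=p^2$ used there. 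Your approach buys a short, self-contained, completely elementary proof that avoids both the external dependence and the modulus-versus-parameter translation; nothing is lost, since the paper only ever uses the non-strict inequality (the strictness needed later in \Cref{lem:charelas} is obtained separately from $g(m^*)<1$, not from sharpening this estimate). One could also note that your argument shows the inequality is strict for every $m\in(0,1)$, since equality in Cauchy--Schwarz would force $\sqrt{1-m\sin^2\theta}$ to be constant in $\theta$, which only happens at $m=0$ --- a small bonus the cited formulation does not state.
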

\begin{proof}
The proof follows from \cite[Proposition B.5]{MS20}. Be aware that the authors there use the different notation of $m = p^2$, their definition of $E(p)$ is actually $E(p^2)$ in our notation. 
\end{proof}
\subsection{Explicit parametrization of Euler's elasticae}



In the sequel, we shall prove the following classification result. 

\begin{prop}\label{prop:elaclassi}
{Let $I \subset \mathbb{R}$ be an interval} and let $\gamma: I \rightarrow \mathbb{R}$ be a smooth solution of \eqref{eq:elasticaeq} for some $\lambda \in \mathbb{R}$. Then up to rescaling, reparametrization and isometries of $\mathbb{R}^2$, $\gamma$ is given by one of the following \emph{elastic prototypes}.  
\begin{enumerate}
\item  (Linear elastica) $\gamma$ is a line, $\kappa[\gamma] = 0$. 
\item (Wavelike elastica) There exists $m \in (0,1)$ such that 
\begin{equation}
 \gamma(s) = \begin{pmatrix}
 2 E(\am( s, m),m )-  s  \\-2 \sqrt{m} \cn(s,m) 
\end{pmatrix}  .
 \end{equation}
 Moreover $\kappa[\gamma] = 2 \sqrt{m} \cn(s,m).$
 \item (Borderline elastica) 
  \begin{equation}
\gamma (s ) = \begin{pmatrix}
2 \tanh(s) - s  \\ - 2 \sech(s) 
\end{pmatrix}. 
 \end{equation}
 Moreover $\kappa[\gamma] =  2 \sech(s).$ 
 \item (Orbitlike elastica) There exists $m \in (0,1)$ such that 
  \begin{equation}
 \gamma(s) = \frac{1}{m} \begin{pmatrix}
 2 E(\am(s,m),m)  + (m -2)s \\- 2\dn(s,m)
\end{pmatrix}  
\end{equation}
Moreover $\kappa[\gamma] = 2 \dn(s,m).$  
\item (Circular elastica) $\gamma$ is a circle. 
\end{enumerate}
\end{prop}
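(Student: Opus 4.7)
The plan is to reduce the constrained elastica equation to a first-order ODE for $\kappa$ via a standard first-integral computation, analyze the resulting phase portrait by case distinction on the roots of a quartic polynomial, and in each non-trivial case produce the explicit parametrization by inverting $\kappa$ through Jacobi elliptic functions and integrating the tangent angle.

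First I would multiply \eqref{eq:elasticaeq} by $\partial_s \kappa$ and integrate to get
\begin{equation}
(\partial_s \kappa)^2 = -\tfrac{1}{4}\kappa^4 + \lambda \kappa^2 + 2E \eqdef P(\kappa)
\end{equation}
for some constant $E \in \mathbb{R}$. The function $P$ is an even quartic opening downward, so its phase portrait in the $(\kappa,\kappa_s)$-plane is completely determined by the discriminant. The qualitative cases are: (a) $P$ admits a constant solution $\kappa \equiv 0$, giving a line; (b) $P$ admits a non-zero constant solution, giving a circle (which forces $\frac{1}{2}c^2 = \lambda$); (c) $P$ has four simple real roots $\pm a, \pm b$ with $0 \le a < b$, in which case bounded orbits either oscillate between $-b$ and $b$ crossing zero (wavelike) or stay in $[a,b]$ (orbitlike); and (d) $P$ has a double root at the origin with two simple non-zero roots, giving the separatrix/borderline case. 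Up to the scaling $s \mapsto \alpha s$, $\gamma \mapsto \alpha^{-1}\gamma$, I can normalize $\lambda$ so that the root data match those appearing in the five prototypes.

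In the wavelike case I would write $P(\kappa) = \frac{1}{4}(\beta^2 - \kappa^2)(\kappa^2 + \alpha^2)$ with appropriate $\alpha, \beta \ge 0$, separate variables, and invert using the Jacobi amplitude to obtain $\kappa(s) = 2\sqrt{m}\,\cn(s,m)$ after normalization, with $m$ determined by the ratio of the roots. The orbitlike case is analogous with $\dn$ in place of $\cn$, and the borderline case arises as the degenerate limit $m\to 1$ in which $\cn, \dn$ both collapse to $\sech$. The tangent angle $\theta$ with $\theta_s = \kappa$ is then obtained by antidifferentiation; the key identity used is
\begin{equation}
\int \cn^2(s,m)\,ds = \frac{1}{m}\bigl(E(\am(s,m),m) - (1-m)s\bigr),
\end{equation}
together with its analogue for $\dn^2$, both of which follow directly from the rules in \Cref{prop:identities}. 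Finally I integrate $\gamma_s = (\cos\theta,\sin\theta)$: in the wavelike case one obtains $\sin\theta$ proportional to $\cn_s = -\sn\,\dn$ and $\cos\theta$ expressible through $E(\am(s,m),m)$ and $s$, producing exactly the formula in Type 2. After applying a translation and rotation (the two constants of integration in the Frenet system) one reaches the stated normal form, and analogous computations give the borderline (Type 3) and orbitlike (Type 4) cases.

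The main obstacle is purely bookkeeping: tracking through which isometry, rescaling, and choice of arc-length origin reduces a generic solution to the specific prototype in the statement, and carrying out the elliptic-function manipulations showing, for instance, that $\int \cos\theta\,ds$ really equals $2E(\am(s,m),m) - s$ in the wavelike case. All of these are straightforward but tedious consequences of the identities collected in \Cref{prop:identities}, and for the routine parts I would defer to the classical treatments in \cite{LangerSinger1,DHV}, which carry out essentially the same case analysis.
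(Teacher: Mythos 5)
Your proposal is correct in substance, but it takes a genuinely different route from the paper. You derive the curvature profiles yourself, from the first integral $(\partial_s\kappa)^2=-\tfrac14\kappa^4+\lambda\kappa^2+2E$ and a phase-plane case analysis, and then reconstruct the curve by integrating the tangent angle $\theta$ with $\theta_s=\kappa$ and $\gamma_s=(\cos\theta,\sin\theta)$, using elliptic identities such as your formula for $\int\cn^2$. The paper instead imports the curvature classification from \cite{Linner} and reconstructs $\gamma$ without ever integrating the angle: by \cite[Proposition 6.1]{AnnaNetworks} every solution of \eqref{eq:elasticaeq} corresponds (up to isometry and reparametrization) to a solution of the system \eqref{eq:dynsys}, which yields the conservation laws $\kappa=-\sigma\gamma_2$ and $\gamma_1'=\mu+\tfrac{\sigma}{2}\gamma_2^2$; thus $\gamma_2$ is read off directly from $\kappa$ and $\gamma_1$ is obtained by one quadrature, with $\sigma,\mu$ fixed by the unit-speed constraint. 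Your route is more self-contained on the ODE side (no citation needed for the curvature dichotomy) but pays for it with the trigonometric/elliptic bookkeeping in computing $\cos\theta$ and $\sin\theta$; the paper's route outsources the curvature step and avoids that bookkeeping via the algebraic relation between $\gamma$ and $\kappa$. One small inaccuracy in your case list: if $P$ has four simple real roots $\pm a,\pm b$ with $0<a<b$, then $P\ge 0$ forces $a\le|\kappa|\le b$, so only the orbitlike regime occurs there; the wavelike regime corresponds to $P$ having two real and two purely imaginary roots, i.e. exactly the factorization $P(\kappa)=\tfrac14(\beta^2-\kappa^2)(\kappa^2+\alpha^2)$ that you in fact use in the computation, so the slip is in the enumeration only. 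Similarly, rather than treating the borderline case as a formal $m\to1$ limit, it is cleaner to handle the double root at $\kappa=0$ directly by separation of variables, which gives $\kappa=2\alpha\,\sech(\alpha s)$ and then the stated parametrization.
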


We give a proof in the rest of this section. {Suppose that $\gamma$ is parametrized by arc-length}.  We know that $\kappa$ satisfies \eqref{eq:elasticaeq}. The solutions of this equation are discussed explicitly \cite[Proposition 3.3]{Linner}. 
\begin{enumerate}
\item[(1)] (Constant curvature) $\kappa$ is constant. 
\item[(2)] (Wavelike elastica) $\kappa(s) = \pm 2\alpha \sqrt{m} \mathrm{cn}( \alpha (s-s_0), m) $ for some $m \in [0,1), \alpha > 0, s_0 \in \mathbb{R}$. In this case $\lambda = \alpha^2(2m-1).$ 
\item[(3)] (Orbitlike elastica) $\kappa(s) = \pm 2 \alpha \dn(\alpha (s-s_0), m)$ for some $m \in [0,1), \alpha > 0, s_0 \in \mathbb{R}$. In this case $\lambda = \alpha^2 (2-m).$  
\item[(4)] (Borderline elastica) $\kappa(s) = \pm 2 \alpha \mathrm{sech}(\alpha (s-s_0)) $ for some $\alpha > 0, s_0 \in \mathbb{R}$ . In this case $\lambda =  \alpha^2$. 
\end{enumerate}
We have to mention that in \cite[Proposition 3.3]{Linner} the solutions are described with a different parameter $p$ instead of $m$. In our notation there holds $m = p^2 \in (0,1)$. In contrast to our list, the classification \cite[Proposition 3.3]{Linner} also allows a wider range of $p$, namely $p \in [0,1]$. However it only distinguishes two cases --- the wavelike case and the orbitlike case. 
The remaining cases in our list arise from the limit cases $p = 0,1$ in \cite[Proposition 3.3]{Linner}. Indeed, the limit case $p=m=0$ in both the wavelike and the orbitlike case correspond to constant solutions. The limit case $p=m=1$ corresponds in both cases to the borderline elastica, as one can infer immediately from \cite[(16.15.2),(16.15.3)]{Abramowitz}.
Once expressions for the curvature are known we can find explicit parametrizations of all {these} elastica. Note that once we have parametrized the solutions for $s_0 = 0$ and `$+$' instead of `$\pm$' we can obtain all other solutions by reparametrization or reflection. Hence we consider only the cases of `$+$' and $s_0 = 0$.

From \cite[Proposition 6.1]{AnnaNetworks} it is known that each smooth solution $\gamma : I \rightarrow \mathbb{R}^2$ of \eqref{eq:elasticaeq} with some parameter $\lambda \in \mathbb{R}$ corresponds up to isometries of $\mathbb{R}^2$ and reparametrization to a solution of 
\begin{equation}\label{eq:dynsys}
\begin{cases}
\gamma_1 ''   = \sigma \gamma_2 \gamma_2', \\
\gamma_2'' = - \sigma \gamma_2 \gamma_1',  \\
\gamma_1'^2 + \gamma_2'^2 = 1 ,
\end{cases}
\end{equation}
for some $\sigma > 0$.  One can now compute that if $\gamma$ is a solution of \eqref{eq:dynsys} then $\kappa =  \gamma_2'' \gamma_1'- \gamma_1'' \gamma_2' = -\sigma \gamma_2$ and $\gamma_1' - \frac{\sigma}{2} \gamma_2^2  \equiv \mu$ for some constant $\mu \in \mathbb{R}$. The last identity can be checked by taking the derivative of the expression and using the first line of \eqref{eq:dynsys}. Following the lines of \cite[Proposition 6.1]{AnnaNetworks} we also obtain that $\lambda = - \sigma \mu$. We are also free to assume that $\gamma_1(0) =0$ as \eqref{eq:dynsys} is not affected by adding a constant to $\gamma_1$. From now on the parameters $\alpha$ and $m$ will be our main parameters. We will express $\sigma, \mu$ in terms of them and use \eqref{eq:dynsys} to obtain an explicit parametrization. 

\textbf{Case 1: Constant curvature.} This yields either lines or circles.

\textbf{Case 2: Wavelike elastica}. First we show that $\sigma = \alpha^2$ and $\mu = 1-2m$.  Note that by  point (2) of the list of possible curvatures and $\lambda= -\sigma \mu$ we have that $\mu = \frac{\alpha^2}{\sigma}(1-2m)$. In partcular, since $\kappa = - \sigma \gamma_2$ we have  
\begin{equation}\label{eq:41}
\gamma_2(s) = - \frac{2}{\sigma} \alpha \sqrt{m} \mathrm{cn}( \alpha s, m)
\end{equation} 
and since $\gamma_1' = \frac{\sigma}{2} \gamma_2^2 + \mu$ we obtain
\begin{equation}\label{eq:42}
\gamma_1'(s) =  \frac{\sigma}{2} \gamma_2(s)^2 + \frac{\alpha^2}{\sigma}(1-2m) = \frac{\alpha^2}{\sigma} ( 2m \cn^2( \alpha s, m) + 1- 2m) .
\end{equation}
Therefore using \eqref{eq:dynsys} and Proposition \ref{prop:identities} we obtain
\begin{align}
1 & =\gamma_1'(s)^2 + \gamma_2'(s)^2 \\ & =  \frac{\alpha^4}{\sigma^2}  \left( \left( 2m \cn^2(\alpha s , m ) + 1- 2m   \right)^2 + 4 m \sn^2(\alpha s , m ) \dn^2(\alpha s , m ) \right) \\  & = \frac{\alpha^4}{\sigma^2} \left( ( 1- 2m \sn^2(\alpha s , m ) )^2 + 4 m \sn^2( \alpha s , m ) ( 1- m \sn^2(\alpha s , m) ) \right) = \frac{\alpha^4}{\sigma^2}
 \end{align}
 Hence $\sigma = \alpha^2$, which implies by \eqref{eq:41} that 
\begin{equation} 
 \gamma_2(s) = - \frac{2}{\alpha}\sqrt{m} \cn(\alpha s, m ).
 \end{equation}
 We can moreover improve the formula for $\mu$ to $\mu = 1-2m$.  Moreover, using $\sigma = \alpha^2$  in \eqref{eq:42} we find 
 \begin{equation}
 \gamma_1'(s) = 2 m \cn^2(\alpha s, m) + (1-2m) = 1 - 2m \sn^2(\alpha s, m ) 
 \end{equation}
 and integrating using $\gamma_1(0)= 0$ we obtain 
 \begin{align}
 \gamma_1(s) & = s - 2m \int_0^s \sn^2(\alpha s, m) \ds = s - \frac{2}{\alpha} \int_0^{\am(\alpha s, m)} \frac{m \sin^2 \theta }{\sqrt{1- m \sin^2 \theta}} d\theta \\
& =  s - \frac{2}{\alpha} \int_0^{\am(\alpha s, m)}  \left( \frac{1}{\sqrt{1- m \sin^2 \theta }} - \sqrt{1-m \sin^2 \theta } \right) d\theta
\\ & = s - \frac{2}{\alpha} F(\am(\alpha s, m),m) +  \frac{2}{\alpha} E(\am(\alpha s , m),m ) =  \frac{1}{\alpha}  \left( 2E(\am(\alpha s , m),m ) - \alpha s \right) .
 \end{align}
 Hence for fixed $\alpha > 0$ one has $\gamma(s)= \frac{1}{\alpha} \gamma_{wave}( \alpha s) $ where $\gamma_{wave}$ is given by 
 \begin{equation}
 \gamma_{wave}(s) = \begin{pmatrix}
 2 E(\am( s, m),m )-  s  \\-2 \sqrt{m} \cn(s,m) 
\end{pmatrix}  .
 \end{equation}
 \textbf{Case 3: Orbitlike elastica.} We proceed as in the wavelike case. We first show that  $\sigma = \alpha^2 m $ and $\mu = \frac{m-2}{m}$. From point (3) in the list of curvatures and $\lambda = -\sigma \mu$ we infer that $\mu = \frac{\alpha^2 (m-2)}{\sigma}$. This leads to 
 \begin{equation}
 \gamma_2(s) =- \frac{2}{\sigma} \alpha \dn( \alpha s, m) 
 \end{equation}
 and by Proposition \ref{prop:identities}
 \begin{equation}
 \gamma_1'(s) = \frac{\sigma}{2} \gamma_2(s) ^2 + \frac{\alpha^2(m-2)}{\sigma} = \frac{\alpha^2 m}{\sigma} ( 1-  2 \sn^2(\alpha s, m) ) 
 \end{equation}
 Using \eqref{eq:dynsys} and Proposition \ref{prop:identities} we obtain 
 \begin{align}
 1 & =  \gamma_1'(s)^2 + \gamma_2'(s)^2 \\ & = \frac{\alpha^4m^2}{\sigma^2} \left( (1 - 2 \sn^2(\alpha s , m ) )^2 + 4 \sn^2(\alpha s, m) \cn^2(\alpha s , m )  \right) \\ & = \frac{\alpha^4 m^2}{\sigma^2} \left( (1 - 2 \sn^2(\alpha s, m) )^2 + 4 \sn^2(\alpha s, m ) ( 1- \sn^2(\alpha s, m) \right) = \frac{\alpha^4 m ^2}{\sigma^2}  
 \end{align} 
 Therefore we find that $\sigma = \alpha^2 m $ and from this follows that $\mu = \frac{m-2}{m}$. We infer 
 \begin{equation}
 \gamma_2(s) =- \frac{2}{m \alpha} \dn( \alpha s, m)
 \end{equation}
 and 
 \begin{equation}
 \gamma_1'(s) = 1 - 2 \sn^2( \alpha s, m).
 \end{equation}
 Integrating we obtain 
 \begin{equation}
 \gamma_1(s) = s - 2 \int_0^s \sn^2(\alpha s, m)\diff s= s \left( 1 - \frac{2}{m} \right) + \frac{2}{\alpha m} E( \am( \alpha s , m ),m ) .
 \end{equation}
 We infer that for fixed $\alpha > 0$ one has that $\gamma(s) = \frac{1}{\alpha} \gamma_{orbit}(\alpha s) $, where $\gamma_{orbit}$ is given by 
 \begin{equation}
 \gamma_{orbit} = \frac{1}{m} \begin{pmatrix}
 2 E(\am(s,m),m)  + (m -2)s \\- 2\dn(s,m)
\end{pmatrix}  .
 \end{equation}
 \textbf{Case 4: Borderline elastica} One can proceed exactly as in the first two cases and obtain that for fixed $\alpha > 0$ one has that $\gamma = \frac{1}{\alpha}\gamma_{border}(\alpha s) $ where 
 \begin{equation}
\gamma_{border} (s ) = \begin{pmatrix}
2 \tanh(s) - s  \\ - 2 \sech(s) 
\end{pmatrix}. 
 \end{equation}

\end{appendices}

\bibliography{Lib}
\bibliographystyle{abbrv}

\end{document}